\newcommand{\Q}{\mathbf{Q}}
\newcommand{\R}{\mathbf{R}}
\newcommand{\N}{\text{N}}
\newcommand{\e}{\text{e}}
\newcommand{\diff}{\mathop{}\!\mathrm{d}}
\newcommand{\eps}{\varepsilon}
\newcommand{\wh}[1]{\widehat{#1}}
\newcommand{\wt}[1]{\widetilde{#1}}
\newcommand{\mc}[1]{\mathcal{#1}}
\newcommand{\mf}[1]{\mathfrak{#1}}
\newcommand{\con}[1]{\overline{#1}}
\newcommand{\smod}[1]{\;(#1)}
\renewcommand{\ge}{\geqslant}
\renewcommand{\le}{\leqslant}
\renewcommand{\Re}{\operatorname{Re}}
\renewcommand{\vec}[1]{\mathbf{#1}}
\newtheoremstyle{custom}
  {.5em}
  {.5em}
  {}
  {}
  {\bfseries}
  {:}
  {.5em}
  {}
\theoremstyle{custom}
\newtheorem{proposition}{Proposition}
\numberwithin{proposition}{section}
\newtheorem{lemma}[proposition]{Lemma}
\newtheorem{corollary}[proposition]{Corollary}
\newtheorem*{remark}{Remark}
\newtheorem*{remarks}{Remarks}
\newtheorem{theorem}[proposition]{Theorem}
\newtheorem{definition}[proposition]{Definition}
\title[Sifting for small split primes]{Sifting for small split primes \\ of an imaginary quadratic field \\ in a given ideal class}
\date{\today}
\author{Louis M. Gaudet}
\begin{document}

% ABSTRACT

\begin{abstract}
Let $D>3$, $D\equiv3\smod{4}$ be a prime, and let $\mc{C}$ be an ideal class in the field $\Q(\sqrt{-D})$. In this article, we give a new proof that $p(D,\mc{C})$, the smallest norm of a split prime $\mf{p}\in\mc{C}$, satisfies $p(D,\mc{C})\ll D^L$ for some absolute constant $L$. Our proof is sieve theoretic. In particular, this allows us to avoid the use of log-free zero-density estimates (for class group $L$-functions) and the repulsion properties of exceptional zeros, two crucial inputs to previous proofs of this result.

\smallskip
\noindent \textbf{Keywords:} sieve, primes, binary quadratic forms, Linnik theorem.
\end{abstract}

\maketitle

% \tableofcontents

\section{Introduction}

For integers $q\ge2$ and $(a,q)=1$, let $p(q,a)$ denote the least prime $p\equiv a\smod{q}$. In 1944, Linnik \cite{linnik1944leastI} showed that
\begin{equation}
\label{linnik theorem}
p(q,a)\;\ll\; q^L,
\end{equation}
where both $L$ and the implied constant are absolute. Since then, there have been many improvements on this result. Building on the work of Heath-Brown \cite{heath1992zero}, Xylouris \cite{xylouris2009linnik} showed that unconditionally one can take $L=5$ in \eqref{linnik theorem}, which is the current record. This comes quite close to the bound
\[
p(q,a)\;\ll\;(q\log q)^2,
\]
which is what follows assuming that the Riemann hypothesis holds for the Dirichlet $L$-functions $L(s,\chi)$. 

Such results are difficult to establish unconditionally, and have traditionally (following Linnik) depended on deep results on the zeros of these $L$-functions, namely a log-free zero-density estimate and a quantitative version of the Deuring-Heilbronn phenomenon (exceptional zero repulsion effect). 

Linnik's theorem has been generalized in the setting of the Chebotarev density theorem: given a Galois extension of number fields $L/K$ with Galois group $G$, each prime $\mf{p}$ of $K$ (unramified in $L$) can be associated to a conjugacy class $C\subset G$ by the Artin symbol $\big[\frac{L/K}{\mf{p}}\big]$. The analogue of Linnik's theorem is a bound (in terms of the various number field parameters involved) on the least norm $\N_{K/\Q}\mf{p}$ of a prime $\mf{p}$ with prescribed Artin symbol. There have been many works in this direction, both conditional (see \cite{lagarias1977effective} and \cite{bach1996explicit}) and unconditional (see \cite{fogels1962distribution}, \cite{montgomery1979bound}, \cite{weiss1983least}, \cite{kowalski2002zeros}, \cite{zaman2017bounding}, and \cite{thorner2017explicit}). These unconditional results proceed by establishing analogues of Linnik's log-free zero-density estimate and quantitative Deuring-Heilbronn phenomenon for the Hecke $L$-functions.

In a different direction, there has been a growing interest in finding new proofs of Linnik's theorem that avoid using input about the zeros of $L$-functions---see for instance \cite{elliot2002least}, \cite{granville2019new}, \cite{koukoulopoulos2020distribution}, \cite{sachpazis2023pretentious}, \cite{friedlander2023selberg}, \cite{friedlander2023sifting}, \cite{matomaki2024products}, \cite{matomaki2024primes}, and Chapter 24 in \cite{friedlander2010opera}. Often these works combine sieve theoretic techniques with ``pretentious methods'' and/or with techniques coming from additive combinatorics. While such proofs are not usually (as of yet) as numerically strong as those that use the zeros of $L$-functions, they are very interesting from a conceptual point of view. Similar to the ``elementary proof'' of the prime number theorem, such proofs show how challenging results in arithmetic can be achieved without (or with minimal) use of results on the zeros of $L$-functions.

In this article, we are interested in a particular analogue of Linnik's theorem for primes in imaginary quadratic fields, which we prove without using zero-density theorems or exceptional zero repulsion results. Let $D>3$ be a prime, $D\equiv3\smod{4}$, so that $-D$ is a negative fundamental discriminant. (We work with prime $D$ for simplicity, though we expect that our methods could be adapted without major modifications to work for general fundamental discriminants $-D$.) For an integral ideal $\mf{a}\subseteq\mc{O}_K$, we denote by $\N\mf{a}=|\mc{O}_K/\mf{a}|$ its (absolute) norm. The class group $\mc{H}$ of $K$ is a finite abelian group of order $h=|\mc{H}|$, the class number.

In analogy to Dirichlet's theorem on primes in arithmetic progression, one can show using class group characters $\chi\in\wh{\mc{H}}$ that for any specified ideal class $\mc{C}\in\mc{H}$, there are infinitely many split primes $\mf{p}$ (i.e., unramified primes $\mf{p}$ with $\N\mf{p}=p$, a rational prime) in the class $\mc{C}$. Therefore, in analogy to Linnik's theorem, we are interested in bounding
\[
p(D,\mc{C})\;\coloneqq\;\min\{\N\mf{p}:\mf{p}\in\mc{C}\text{ is a split prime of }\mc{O}_K\},
\]
the least norm of a split prime in the class $\mc{C}$. Our main result is

\begin{theorem}
\label{main theorem}
There is an absolute constant $L>0$ such that
\[
p(D,\mc{C})\;\ll\;D^L,
\]
and the implied constant is absolute.
\end{theorem}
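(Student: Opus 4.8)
The plan is to prove Theorem~\ref{main theorem} by a sieve argument that detects split primes in a fixed ideal class $\mc{C}$ without ever touching the zeros of class group $L$-functions. The starting observation is the classical dictionary between ideal classes and $\SL_2(\Z)$-classes of binary quadratic forms: a split prime $\mf{p}\in\mc{C}$ of norm $p$ corresponds to a representation $p = Q(x,y)$ by the (primitive, positive-definite) form $Q$ of discriminant $-D$ attached to $\mc{C}$ (or to $\mc{C}^{-1}$, depending on normalization), with $(x,y)$ primitive. So bounding $p(D,\mc{C})$ amounts to showing that the form $Q$ represents a prime of size $\ll D^L$. I would therefore set up a sifting problem on the set $\mc{A} = \{Q(x,y) : (x,y)\in\Z^2 \text{ primitive}, \ 0 < Q(x,y) \le x\}$ for a parameter $x = D^L$, and try to show that $\mc{A}$ contains a prime. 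The values $Q(x,y)$ lie in a box of area $\asymp x/\sqrt{D}$, so one expects $\asymp x/\sqrt{D}$ such values, and a standard lower-bound sieve (Selberg's $\Lambda^2$ sieve, or the Brun–Hooley sieve, or the beta-sieve) would produce $\gg x/(\sqrt{D}\log x)$ integers in $\mc{A}$ free of prime factors below $x^{1/s}$ for a suitable sieve level $s$. The level of distribution needed is where the real work lies.

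Concretely, the key steps are: (i) \textbf{Local densities and main term.} For each prime $\ell$, compute the proportion $\rho(\ell)$ of primitive $(x,y)$ with $\ell \mid Q(x,y)$; this is governed by whether $\ell$ splits, is inert, or ramifies in $K$, i.e.\ by $\big(\frac{-D}{\ell}\big)$, and the singular series $\prod_\ell(1-\rho(\ell))(1-1/\ell)^{-1}$ will be $\asymp L(1,\chi_{-D})^{\pm1}$, controlled by $D^{-\eps}\ll L(1,\chi_{-D})\ll \log D$ via Siegel/Dirichlet's class number formula — but crucially I only need the \emph{lower} bound $L(1,\chi_{-D}) \gg D^{-\eps}$, which is itself ineffective only in the Siegel form; the effective Hecke bound $L(1,\chi_{-D}) \gg D^{-1/2}$ (or $1/\log D$ under no exceptional zero) suffices to keep the constant $L$ absolute. (ii) \textbf{Level of distribution.} Establish that $\sum_{d\le Q}\big| \#\{(x,y) : d\mid Q(x,y), \text{conditions}\} - \rho(d)\cdot(\text{main term})\big|$ is small for $Q = x^{\theta}$ with some fixed $\theta>0$; since counting $(x,y)$ in a region with $Q(x,y)\equiv 0\smod d$ reduces to counting lattice points in dilates of the form's level sets intersected with sublattices, this is a two-dimensional lattice-point count and one gets $\theta$ essentially as large as one likes relative to the number of variables — the honest constraint is that the region must be large compared to $d$, forcing roughly $Q \le (x/\sqrt D)^{1-\eps}$, hence $\theta < 1$. (iii) \textbf{Sieve with a remainder and the fundamental lemma / Diamond–Halberstam–Richert weights}, applied with a dimension-$1$ sieve (the form $Q$ is a single polynomial), to get $\gg x/(\sqrt D\log x)$ elements of $\mc{A}$ with no prime factor $< z = x^{1/s}$; if $s<2$ this forces each surviving value to be prime or $1$, giving the theorem once $x = D^L$ is large enough that the main term beats the $O(\sqrt D)$ from small and exceptional sieve contributions. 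Alternatively, and I suspect this is the route actually taken, one uses a \emph{two-dimensional} sieve bootstrap: first produce many almost-primes in $\mc{A}$, then use a bilinear/Type-II estimate (a ``switching'' argument à la Chen, or an asymptotic sieve for primes in the style of Friedlander–Iwaniec) to convert a positive proportion into actual primes.

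The step I expect to be the main obstacle is \textbf{ensuring the prime detected is genuinely split and genuinely in $\mc{C}$, not merely a prime value of the form.} The form $Q$ represents $p$ primitively iff $\big(\frac{-D}{p}\big)=1$ and $\mf{p}\in\mc{C}^{\pm1}$, but $Q$ also represents some primes imprimitively (e.g.\ $Q(x,y)$ can be divisible by $D$, corresponding to the ramified prime, or can represent $p$ via a non-primitive vector giving $p$ times a square) — so I must restrict to primitive $(x,y)$ throughout and also discard the $\ell = D$ contribution carefully, which slightly complicates the local factor at $D$ and the coprimality conditions in the sieve. A second, related obstacle is that sieving $\mc{A}$ in \emph{one} variable effectively (treating $Q(x,y)$ as a polynomial in the single ``variable'' $n = Q(x,y)$) loses the bilinear structure, so to reach primes rather than $P_2$'s one genuinely needs the two-variable structure of $Q(x,y)=p$; controlling the resulting Type-II sums uniformly in $D$ — where the relevant moduli and ranges all depend on $D$ — is the technical heart, and is presumably where the bulk of the paper's effort goes. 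A final bookkeeping obstacle: making every implied constant and the exponent $L$ \emph{absolute}, which requires using only the effective lower bound for $L(1,\chi_{-D})$ and effective versions of all lattice-point and large-sieve inputs, never Siegel's theorem.
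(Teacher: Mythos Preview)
Your approach is genuinely different from the paper's, and it also has a real gap.

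\textbf{What the paper actually does.} The paper does not sieve values of a fixed quadratic form $Q(x,y)$. It works throughout with the sequence $a_n = n^{-1}\lambda_{\mc{C}}(n)f(\log n/\log x)$ and decomposes $\lambda_{\mc{C}}$ via orthogonality of the class group characters $\chi\in\wh{\mc{H}}$, so that everything reduces to sums of the Hecke eigenvalues $\lambda_\chi(n)$. The argument then splits on whether $L(s,\chi_D)$ has a real zero near $1$. In the exceptional case one Buchstab iteration plus the Fundamental Lemma suffices, because the ``Buchstab loss'' is controlled by $\sum_{z\le p<\sqrt{x}}(1+\chi_D(p))/p$, which is tiny when there is an exceptional zero. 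In the non-exceptional case the paper applies Buchstab twice, evaluates the first two pieces by the Fundamental Lemma, and for the third (bilinear) piece uses two ingredients: a large-sieve-type inequality for $\lambda_\chi$ over rough integers to recoup the factor $h$ lost in opening with characters, and the explicit formula together with the classical zero-free region for the $L_K(s,\chi)$. Contrary to your opening sentence, the paper \emph{does} touch zeros of class group $L$-functions; what it avoids are log-free zero-density estimates and Deuring--Heilbronn repulsion, not zeros altogether.

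\textbf{Where your plan breaks down.} Your step (iii), ``if $s<2$ this forces each surviving value to be prime,'' runs straight into parity: a one-dimensional lower-bound sieve gives nothing for $s<\beta(1)=2$, so you cannot reach primes this way. You recognize this and retreat to ``Type-II estimates \`a la Friedlander--Iwaniec,'' but that is precisely the missing idea, not a detail to be filled in. To run an asymptotic sieve for primes on the values of a \emph{single} form $Q$ of discriminant $-D$, uniformly in $D$ with $X=D^L$, you would need bilinear sums like $\sum_{m}\sum_{n}\alpha_m\beta_n\,\mathbf{1}[mn=Q(x,y)]$ bounded with a power saving in $D$. Detecting which products $mn$ are represented by $Q$, as opposed to by some other form in the genus/class group, is exactly a question about the class of the ideal of norm $mn$, and the natural harmonics for it are the class group characters --- at which point you are back to the paper's framework. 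In short, your proposal defers the entire difficulty to an unproved Type-II estimate whose most plausible proof route recreates the paper's approach. The local-density, lattice-point, and $L(1,\chi_D)$ remarks are fine but only set up the Type-I side; they do not get you past parity.
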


This result is a special case of the results for the Chebotarev theorem, so it has been established several times before in those works listed above. In particular, Thorner and Zaman \cite{thorner2017explicit} showed
\[
p(D,\mc{C}) \;\ll\; D^{694}
\]
for all negative fundamental discriminants $-D$. However, our work is novel in that it is the first time such a result has been established without the use of zero-density theorems and quantitative Deuring-Heilbronn results. We also handle new sieve-theoretic challenges (in comparison with works on Linnik's theorem) in the sifting dimension aspect; see Section \ref{sec:outline} for details.

\begin{remarks}
In Zaman's thesis \cite{zaman2017analytic} it is shown that
\[
p(D,\mc{C})\;\ll\;D^{455},
\]
which is the current record to our knowledge.

Ditchen \cite{ditchen2013primes} has shown that except for a density zero subset of negative fundamental discriminants $-D\not\equiv0\smod{8}$, one has $p(D,\mc{C})\ll D^{20/3+\eps}$. For this result, they establish a large sieve inequality for class group characters on average over discriminants, as well as an analogue of the Bombieri-Vinogradov theorem for primes in ideal classes.

Given the correspondence between imaginary quadratic fields and binary quadratic forms (see \cite{cox2022primes}, for instance), for the principal class $\mc{C}_0$, the quantity $p(D,\mc{C}_0)$ is the also the least prime of the form $p=x^2+Dy^2$ when $D\equiv1\smod{4}$. In our case with $D\equiv3\smod{4}$, $p(D,\mc{C}_0)$ is the least prime $p$ of the form $4p=x^2+Dy^2$. The distribution of such primes has been studied by Fouvry and Iwaniec \cite{fouvry2003low-lying} in connection with low-lying zeros of dihedral $L$-functions.
\end{remarks}

\section{Statement of results}

Theorem \ref{main theorem} is the result of combining Theorems \ref{exceptional theorem} and \ref{nonexceptional theorem} below; we now establish our notations and state these theorems.

Given an ideal class $\mc{C}$ in the class group $\mc{H}$ of $K=\Q(\sqrt{-D})$, we put
\[
\lambda_{\mc{C}}(n)\;=\;\#\{\mf{a}\in\mc{C};\; \N\mf{a}=n\}.
\]
Given a character $\chi\in\wh{\mc{H}}$ of the class group, we define
\begin{equation}
\label{definition of lambda chi}
\lambda_\chi(n)\;=\;\sum_{\N\mf{a}=n}\chi(\mf{a}),
\end{equation}
the sum being taken over integral ideals $\mf{a}\subseteq\mc{O}_K$ of norm $n$. Then by the orthogonality of the class group characters we have
\begin{equation}
\label{orthogonality of characters}
\lambda_{\mc{C}}(n)\;=\;\frac{1}{h}\sum_{\chi\in\wh{\mc{H}}}\con{\chi}(\mc{C})\lambda_\chi(n).
\end{equation}
For the trivial character $\chi_0\in\wh{\mc{H}}$ we have
\begin{equation}
\label{trivial character formula}
\lambda_{\chi_0}(n)\;=\;(1*\chi_D)(n)\;=\;\sum_{d\mid n}\chi_D(d),
\end{equation}
which is the number of ideals in $\mc{O}_K$ of norm $n$, and
\[
\chi_D(n)\;=\;\Big(\frac{-D}{n}\Big)
\]
is the Kronecker symbol. Indeed, $\chi_D$ is a primitive real Dirichlet character of conductor $D$. By the Dirichlet class number formula, we have
\[
h\;=\;\frac{1}{\pi}\sqrt{D}\;L(1,\chi_D).
\]
Our main object of study is the sequence
\begin{equation}
\label{definition of a_n}
a_n\;=\;\frac{1}{n}\lambda_\mc{C}(n)f\Big(\frac{\log n}{\log x}\Big),
\end{equation}
$f(u)\ge0$ a smooth function with $\wh{f}(0)>0$, $\wh{f}$ denoting the Fourier transform,
\[
\wh{f}(\xi)\;\coloneqq\;\int_{-\infty}^\infty f(u)\e(-\xi u)\diff u,
\]
and $\e(z)\coloneqq e^{2\pi iz}$. We assume that $f(u)$ is supported in the segment
\[
1-\nu\le u\le1,
\]
where $\nu>0$ is a small concrete number whose value can be determined in the course of our arguments (though the exact value is not important to us). Here, $x$ is a large parameter going to infinity, and our goal is to estimate
\[
S\;\coloneqq\;\sum_pa_p\;=\;\sum_p\frac{1}{p}\lambda_\mc{C}(p)f\Big(\frac{\log p}{\log x}\Big).
\]
By the prime number theorem, we have
\[
\sum_p\frac{1}{p}f\Big(\frac{\log p}{\log x}\Big)\;\sim\;\wt{f}(0)\;>\;0,
\]
where $\wt{f}$ denotes the Mellin transform of $f$,
\[
\wt{f}(s)\;=\;\int_0^\infty u^{s-1}f(u)\diff u.
\]
One expects prime ideals $\mf{p}$ to equidistribute among the $h$ ideal classes $\mc{C}$ in $\mc{H}$ even when the discriminant $D$ is comparable in size (in the logarithmic scale) to the norm $\N\mf{p}$. Thus we expect the asymptotic formula
\[
S\;\sim\;\frac{1}{h}\;\wt{f}(0)
\]
to hold for $x\ge D^A$ for some absolute constant $A>0$. Indeed, the Riemann hypothesis for the class group $L$-functions $L_K(s,\chi)$ implies that the above asymptotic formula holds with $x\ge D^2(\log D)^4$. Here we establish the bound
\begin{equation}
\label{bounds to establish}
S \;\gg\; \frac{1}{h}\;\wt{f}(0)
\end{equation}
uniformly for $x\gg D^L$ for some absolute $L>0$. (In actuality, we prove a slightly weaker lower bound in the case of an exceptional character; see the precise statement in Theorem \ref{exceptional theorem} and the remarks that follow.) In particular, it follows from this lower bound that for all prime $D\equiv3\smod{4}$, we have
\begin{equation}
\label{linnik type result}
\lambda_\mc{C}(p)>0\quad\text{for some }p\ll D^L. 
\end{equation}
In other words, every class $\mc{C}$ contains a prime ideal $\mf{p}$ with $p=\N\mf{p}\ll D^L$.

To establish the bound \eqref{bounds to establish} unconditionally, we split our argument into two cases depending on the non/existence of real zeros of the Dirichlet $L$-function $L(s,\chi_D)$. We will use assumptions about such zeros in several places in the work (and also for other $L$-functions), so to clarify this, we make the following

\begin{definition}
Let $L(s,f)$ be an $L$-function of conductor $\Delta\ge3$ (see Appendix \ref{section euler products} for definitions). For a real number $c>0$, we say that ``Hypothesis $\text{H}(c)$'' holds for $L(s,f)$ if every zero $\rho=\beta+i\gamma$ of $L(s,f)$ with $|\gamma|\le1$ satisfies
\begin{equation}
\label{assumption on zeros}
\beta\;\le\;1-\frac{c}{\log \Delta}.
\end{equation}
\end{definition}

Now we state our main two theorems, which together prove Theorem \ref{main theorem}.

\begin{theorem}
\label{exceptional theorem}
There exists an absolute constant $c>0$ such that if the Dirichlet $L$-function $L(s,\chi_{D})$ has a real zero $\beta$ that satisfies
\[
\beta\;>\;1-\frac{c}{\log D},
\]
then we have
\begin{equation}
\label{exceptional S bound}
S\;\gg\;\frac{\wh{f}(0)}{h}\frac{L(1,\chi_D)}{\log x}.
\end{equation}
\end{theorem}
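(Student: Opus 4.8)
The plan is to peel off the contribution of the trivial class-group character, which supplies the main term, and to show everything else is lower order. By the orthogonality relation \eqref{orthogonality of characters},
\[
S\;=\;\frac1h\sum_{\chi\in\wh{\mc H}}\con{\chi}(\mc C)\,T_\chi,\qquad T_\chi\;:=\;\sum_p\frac1p\,\lambda_\chi(p)\,f\Big(\frac{\log p}{\log x}\Big),
\]
and since $\lambda_{\chi_0}(p)=1+\chi_D(p)=\#\{\mf p\mid p\}\ge0$ by \eqref{trivial character formula}, the $\chi_0$-term is $T_{\chi_0}=2\sum_{p\text{ split}}p^{-1}f(\log p/\log x)\ge0$ (the ramified prime $p=D$ lies outside the support of $f$ for $x\gg D^L$). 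So I would first lower-bound $T_{\chi_0}$, and then show that $\tfrac1h\sum_{\chi\ne\chi_0}\con{\chi}(\mc C)T_\chi$ is of smaller order.

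For the main term I would feed the explicit formula for $\zeta_K(s)=\zeta(s)L(s,\chi_D)$ against the smooth weight attached to $f$ (equivalently, combine the prime number theorem for $\Q$ with the twisted prime number theorem for $\chi_D$). Writing $g(y)=(y\log y)^{-1}f(\log y/\log x)$ and $\wh g(s)=\int_0^1 v^{-1}f(v)e^{v(s-1)\log x}\diff v$, one gets $T_{\chi_0}=\wh g(1)-\wh g(\beta)-\sum_{\rho\ne1,\beta}\wh g(\rho)+O(x^{-1/2+\eps})$, where $\rho$ runs over the remaining zeros of $\zeta_K$; here $\wh g(1)=\wt f(0)$ matches $\sum_p p^{-1}f(\log p/\log x)$, and the exceptional zero contributes $-\wh g(\beta)=-\int_0^1 v^{-1}f(v)e^{-v(1-\beta)\log x}\diff v$. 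Thus the main term of $T_{\chi_0}$ is
\[
\int_0^1\frac{f(v)}{v}\Big(1-e^{-v(1-\beta)\log x}\Big)\diff v\;\ge\;(1-\beta)(\log x)\,e^{-(1-\beta)\log x}\,\wh f(0),
\]
using $f\ge0$ and $1-e^{-t}\ge te^{-t}$. Taking $x\gg D^L$ with $L$ an absolute constant and invoking the hypothesis $1-\beta<c/\log D$ keeps $(1-\beta)\log x$ bounded, so this is $\gg(1-\beta)(\log x)\wh f(0)$; then the classical bound $L(1,\chi_D)=(1-\beta)L'(\xi,\chi_D)\ll(1-\beta)(\log D)^2$ converts it into $\gg L(1,\chi_D)(\log x)^{-1}\wh f(0)$ once $L$ is large enough. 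The loss of the factor $\asymp(\log D)^2/\log x$ at this last step is precisely why the bound is weaker than the expected $S\gg L(1,\chi_D)/h$.

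The hard part is the error analysis, and this is where the sieve, rather than the zeros of $L$-functions, has to do the work: the main term just produced is tiny — of size $L(1,\chi_D)/\log x$, which can be as small as $D^{-1/2}/\log x$ — so the secondary zeros of $\zeta_K$ and the non-trivial characters must be controlled to within a power of $D$, a precision unreachable from the classical zero-free region and the bare explicit formula. For $\sum_{\rho\ne1,\beta}\wh g(\rho)$ the smoothness of $f$ makes $\wh g$ decay rapidly on vertical lines, damping the high zeros, but the low-lying ones must be absorbed into a sieve lower bound for split primes rather than into a density estimate. For the non-principal characters the needed statement is equidistribution of split primes over the ideal classes; here the hypothesis that $D$ is prime is used crucially, since then $h$ is odd, every $\chi\ne\chi_0$ is non-real, and $L_K(s,\chi)$ carries no Siegel zero, so the $T_\chi$ with $\chi\ne\chi_0$ can be bounded by the sieve-theoretic estimates of Theorem \ref{nonexceptional theorem} — which is exactly where log-free zero-density estimates for class-group $L$-functions and exceptional-zero repulsion are traded away. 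Combining $S\ge\tfrac1h T_{\chi_0}-\tfrac1h\sum_{\chi\ne\chi_0}|T_\chi|$ with these bounds then yields $S\gg h^{-1}\wh f(0)L(1,\chi_D)(\log x)^{-1}$.
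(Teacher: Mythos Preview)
Your approach via the explicit formula is fundamentally different from the paper's sieve argument, and it has a genuine gap in the error analysis. The main term you extract, $\gg(1-\beta)(\log x)\wh f(0)$, can be \emph{arbitrarily small} since the Siegel zero $\beta$ may lie arbitrarily close to $1$. Against this you must weigh the secondary zeros of $\zeta_K$ and the full sums $T_\chi$ for $\chi\ne\chi_0$. Using only the classical zero-free region (which is all you have, even granting that $L_K(s,\chi)$ for $\chi\ne\chi_0$ carries no Siegel zero), each of those contributions is at best $O((\log D)\,e^{-c'/\theta})$ after summing over the $\asymp\log D$ low-lying zeros; this quantity does not tend to zero as $D\to\infty$ for fixed $\theta=1/L$, and it certainly does not beat an arbitrarily small main term. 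Controlling it would require precisely a log-free zero-density estimate or Deuring--Heilbronn repulsion --- the inputs the theorem is designed to avoid. Your appeal to Theorem~\ref{nonexceptional theorem} to handle the $T_\chi$ is also circular: that theorem assumes Hypothesis $\text{H}(c)$ for $L(s,\chi_D)$, the hypothesis complementary to the one here, and in any case it bounds $S$, not individual $T_\chi$.

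The paper proceeds entirely differently. It applies Buchstab's identity $S(\mc A,\sqrt x)=S(\mc A,z)-\sum_{z\le p<\sqrt x}S(\mc A_p,p)$ and estimates both pieces by the Fundamental Lemma (Corollary~\ref{corollary lambda chi with sieve weights}). The crucial point is that once primes are replaced by sieve-weighted integers, the non-principal characters $\chi\ne\chi_0$ contribute $O(h^{-1}(\log x)^{-2})$ purely by the Voronoi-type summation of Proposition~\ref{corollary summation formula over progressions} --- a consequence of the functional equation, with no reference to zeros at all. The trivial character then yields $S(\mc A,z)\ge h^{-1}XV(z)(1+O(e^{-s}))$ and $\sum_p S(\mc A_p,p)\le\delta(z,\sqrt x)\cdot h^{-1}XV(z)(1+O(\cdots))$, and the exceptional zero enters only through the elementary inequality $\delta(z,\sqrt x)=\sum_{z\le p<\sqrt x}(1+\chi_D(p))/p\le 2(1-\beta)\log\sqrt x+O(z^{-1/4})$, which makes the subtracted term a small fraction of the first. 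The lower bound $XV(z)\gg L(1,\chi_D)\wh f(0)/\log x$ then finishes. In short: the sieve lets one trade sums over primes for sums over integers, where the non-trivial characters are killed by convexity rather than by information about zeros.
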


\begin{theorem}
\label{nonexceptional theorem}
Let $c>0$ be the constant from the theorem above, and suppose that Hypothesis $\text{H}(c)$ holds for $L(s,\chi_D)$. Then we have
\begin{equation}
\label{nonexceptional S bound}
S\;\gg\;\frac{\nu}{h}.
\end{equation}
\end{theorem}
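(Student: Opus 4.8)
The plan is to treat $S = \sum_p a_p$ by a lower-bound sieve applied to the sequence $(a_n)$ of \eqref{definition of a_n}, exploiting that under Hypothesis $\text{H}(c)$ the relevant class-group $L$-functions have no zero close to $s=1$. First I would record the linear-sieve-type information one needs: for a squarefree modulus $q$ with $(q,D)=1$ built out of split primes, one must understand $\sum_{n\equiv 0\,(q)} a_n$, i.e. the distribution of $\lambda_{\mc C}(n)$ in residue classes and, after the usual reduction, the mean value $\sum_{n\le x}\lambda_{\mc C}(n)/n$ localized by the cutoff $f(\log n/\log x)$. Using \eqref{orthogonality of characters}, this splits over $\chi\in\wh{\mc H}$ into sums of $\lambda_\chi(n)/n$; the main term comes from $\chi=\chi_0$ via \eqref{trivial character formula} and the prime number theorem / Mellin inversion, giving a main term of size $\asymp \wt f(0)/h$ (after dividing by $h$ in \eqref{orthogonality of characters} and using $\sum_{n\le x}(1*\chi_D)(n)/n \sim L(1,\chi_D)\log x$ together with $h = \frac1\pi\sqrt D\,L(1,\chi_D)$), while the non-trivial characters contribute error terms controlled by a zero-free region for $L_K(s,\chi)$. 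This is exactly where Hypothesis $\text{H}(c)$ enters: it forbids a Siegel-type zero for $L(s,\chi_D)$ (and a standard argument extends the zero-free region to all the $L_K(s,\chi)$, whose product is $\zeta_K(s)$ times Hecke $L$-functions), so each non-principal $\chi$ contributes $O(x^{-\delta})$-type savings and the level-of-distribution hypothesis needed for the sieve holds for $q$ up to a small power of $x$.

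With arithmetic information of this shape in hand, I would run a combinatorial sieve (Rosser–Iwaniec / the linear sieve, or a Selberg-type lower bound sieve) to pass from the full sum $\sum_n a_n$ to $\sum_p a_p = S$. The subtlety flagged in the paper's outline is the \emph{sifting dimension}: because $\lambda_{\mc C}(n)$ counts ideals of norm $n$ in a fixed class rather than integers, the local densities $\sum_{n\equiv0\,(p)}a_n \big/ \sum_n a_n$ for split primes $p$ are on average $\asymp 2/p$ (each rational split prime $p$ factors as $\mf p\con{\mf p}$, contributing two prime ideals), so the sieve has dimension $\kappa$ close to $2$ rather than $1$. I would therefore invoke a higher-dimensional combinatorial sieve with the appropriate $F,f$ functions, choosing the sifting parameter $z = x^{1/s}$ with $s$ bounded so that $f_\kappa(s)>0$; the narrow support of the weight $f$ (supported in $[1-\nu,1]$) is what makes this work, since sifting out prime-ideal factors below $x^{1-\nu}$ from an $n$ of size $\asymp x$ can leave only $O(1)$ prime factors, and a Buchstab-type argument then extracts the genuinely prime $p$. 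Summing the sieve main term yields $S \gg \nu\, \wt f(0)/h$, and absorbing $\wt f(0)$ (a fixed positive constant depending only on the fixed test function $f$) into the implied constant gives \eqref{nonexceptional S bound}.

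The main obstacle I expect is not the sieve axioms per se but controlling the error term in the level-of-distribution estimate uniformly in $D$, i.e. showing $\sum_{q\le x^\theta}\big|\sum_{n\equiv0\,(q)} a_n - (\text{main term})\big|$ is small for some fixed $\theta>0$ with the implied constant independent of $D$. This requires a genuinely zero-density-free input: one must bound the contribution of non-principal class-group characters using only the classical zero-free region granted by Hypothesis $\text{H}(c)$, together with the explicit formula for $\psi_{\mc C}(x)$-type sums, being careful that the number of characters $h$ can be as large as $\sqrt D\log D$, so the per-character saving $x^{-\delta}$ must beat $h$. The second delicate point is the dimension-$2$ sieve bookkeeping: one must verify the two-sided bounds on $\sum_{n\equiv0\,(q)}a_n$ with a remainder that is summable over $q$, and check that the resulting $f_2(s)>0$ for the chosen $s$, which constrains (and thereby determines) how small $\nu$ must be taken.
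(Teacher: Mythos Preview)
Your proposal has a genuine gap at its core: a two-dimensional lower-bound sieve cannot by itself reach primes. With sifting dimension $\kappa=2$ the beta-sieve lower bound $\mf{f}(s)$ is positive only for $s\ge\beta(2)\approx4.83$, so with level $y\approx x^{1-\nu}/D^{3/2}$ you are forced to take $z\le x^{1/4.83\dots}$; what you then bound below is $S(\mc{A},z)$, the count of integers with no prime factor below $z$, not $S(\mc{A},\sqrt{x})=\sum_p a_p$. Your sentence ``a Buchstab-type argument then extracts the genuinely prime $p$'' sweeps under the rug precisely the hard part of the theorem. Also, the remark that the narrow support $[1-\nu,1]$ lets you ``sift out prime-ideal factors below $x^{1-\nu}$'' is a misreading: the support of $f$ localizes $n$, not the sifting level, and you still need $z\ge\sqrt{x}$ to isolate primes---a range the two-dimensional sieve cannot reach.

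What the paper actually does is quite different. It applies Buchstab's identity \emph{twice} to obtain $S(\mc{A},\sqrt{x})=S_1+S_2+S_3$ and evaluates each $S_i$ nearly asymptotically (Propositions \ref{proposition evaluation of S1}--\ref{final result for S3}); the Fundamental Lemma handles $S_1,S_2$, while $S_3=\sum\sum S(\mc{A}_{p_1p_2},p_2)$ requires a smooth partition of $p_2$, opening with class group characters, a large-sieve inequality over $\wh{\mc{H}}$ (Proposition \ref{large sieve type inequality}) to recover the factor $h$, and the explicit formula for the $p_2$-sum, using only the classical zero-free region from Hypothesis $\text{H}(c)$ (no density estimates). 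The decisive structural idea you are missing is that the main term $W$ of $S_3$ is independent of the class $\mc{C}$; hence one can compare $S(\mc{A},\sqrt{x})$ with $S(\mc{B},\sqrt{x})$ for the class-averaged sequence $b_n=h^{-1}\sum_{\mc{C}}a_n(\mc{C})$, show the differences $S_i(\mc{A})-S_i(\mc{B})$ are small, and then evaluate $S(\mc{B},\sqrt{x})$ directly from $\lambda_{\chi_0}(p)=1+\chi_D(p)$, the prime number theorem, and \eqref{smooth summation for chiD}. Finally, your claim that the level of distribution requires the zero-free region is also off: the congruence sums are handled in Proposition \ref{corollary summation formula over progressions} via the functional equation (Voronoi-type summation), not via zeros; Hypothesis $\text{H}(c)$ enters later, in approximating $L(1,\chi_D)$ by a short Euler product, in bounding $\sum_p\chi_D(p)/p$, and in the explicit-formula estimate for $S_3$.
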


\begin{remarks}
In both theorems above, the implied constants are absolute and effective, though we make no attempt at computing them here. See \cite{gaudet2023least}, where they compute an explicit admissible value for the exponent $L$ in \eqref{linnik type result}.

We have $\wt{f}(0)\ll\nu$, so the bound \eqref{nonexceptional S bound} implies \eqref{bounds to establish}. On the other hand, the lower bound in \eqref{exceptional S bound} is somewhat smaller than the true order of magnitude. This is because in the proof we have used the crude bound $V(z)\gg(\log x)^{-2}$ that does not involve cancellation in sums over $\chi_D(p)$. We would recover the correct order of magnitude $XV(z)\gg\wh{f}(0)$ if we showed that $L(1,\chi_D)$ is well-approximated by the product $\prod_{p<z}(1-\chi_D(p)/p)^{-1}$. This is of course expected to be the case, and we prove it in Appendix \ref{section euler products} under the complementary assumption that $\chi_D$ is not exceptional. In any case, the bound \eqref{exceptional S bound} still shows that there are many primes $p\ll D^L$ with $\lambda_{\mc{C}}(p)>0$. 
\end{remarks}

\section{Outline of the arguments}
\label{sec:outline}

We follow the general approach of Friedlander and Iwaniec's proof of Linnik's theorem in Chapter 24 of \cite{friedlander2010opera}---see also their recent related articles \cite{friedlander2023selberg} and \cite{friedlander2023sifting}. Our work differs from theirs in a few key aspects, which we now explain.

The goal is to give a positive lower bound for the sum $\sum_pa_p$, where for them $\mc{A}=(a_n)$ is the characteristic function of the arithmetic progression $n\equiv a\smod{q}$, $x/2<n\le x$, and for us $(a_n)$ is defined by \eqref{definition of a_n}. They begin with an application of Buchstab's identity,
\begin{equation}
\label{outline:buchstab}
S(\mc{A},\sqrt{x})\;=\;S(\mc{A},z)\;-\sum_{z\le p<\sqrt{x}}S(\mc{A}_p,p),
\end{equation}
where $S(\mc{A},w)$ denotes the sum of $(a_n)$ over $n$ having no prime factor less than $w$, and $\mc{A}_p=(a_{pm})$ denotes the subsequence of $\mc{A}$ over multiples of $p$. Here we take $z=x^{1/r}$ with $r$ taken to be as large as necessary.

First they treat the case where there is an exceptional character $\chi\smod{q}$. In this case, they apply \eqref{outline:buchstab} to the ``twisted'' sequence $\wt{a}_n=\lambda(n)a_n$, where
\[
\lambda(n)=\sum_{d\mid n}\chi(d).
\]
They show using the Fundamental Lemma of Sieve Theory that the two terms on the right-hand side of \eqref{outline:buchstab} are (asymptotically as $r$ becomes large) of the same size, up to a factor of
\[
\delta(z,x)\;\coloneqq\;\sum_{z\le p<x}\frac{\lambda(p)}{p}\;=\;\sum_{z \le p<x}\frac{1+\chi(p)}{p}
\]
present in the second term. Assuming that $\chi$ is exceptional, $\delta(z,x)$ is very small, and a positive lower bound for $S(\mc{A},\sqrt{x})=\sum_pa_p$ follows. Friedlander and Iwaniec also give in \cite{friedlander2023selberg} an alternative approach via Selberg's sieve that works on similar principles and gives comparable results. 

We follow their approach in \cite{friedlander2010opera} to prove our Theorem \ref{exceptional theorem}, which is under the assumption of an exceptional character. This is taken up in Section \ref{sec:exceptional}. We require little modification of their arguments, since their method does not require very specific properties of the sequence $\mc{A}=(a_n)$ beyond some basic sieve assumptions that also apply in our case. In fact, it is even simpler for us, since we have no need to twist our sequence $(a_n)$ by the weights $\lambda(n)$ above---such a factor naturally appears in this particular sequence already; see Proposition \ref{lambda chi with sieve weights} for a precise statement.

For the non-exceptional case, Friedlander and Iwaniec work with a combinatorial sieve identity that leads to
\begin{equation}
\label{fi sieve inequality}
S(\mc{A},z)\;\ge\;S^-(\mc{A},z)\;+\;\frac{1}{24}Q(\mc{A}),
\end{equation}
where
\[
S^-(\mc{A},z)\;=\;\sum_{d\mid P(z)}\lambda_d^-A_d
\]
is the lower bound coming from the beta-sieve (so that $(\lambda_d^-)$ are the lower-bound beta-sieve weights, and $A_d$ are the congruence sums for the sequence $\mc{A}$---see Section \ref{sec:beta-sieve} for details), and
\[
Q(\mc{A})\;\coloneqq\;\sum_{p_0}\sum_{p_1}\sum_{p_2}\sum_{p_3}\sum_{p_4}a_{p_0p_1p_2p_3p_4},
\]
where the variables $p_j$ run over specific segments $x^{\alpha_j}\le p_j\le x^{\beta_j}$. 

Their sifting problem is linear (i.e. of sieve dimension $\kappa=1$; again, see \ref{sec:beta-sieve} for details), which means that one can show that $S^-(\mc{A},z)$ is negligible (relatively very small) for $z$ close to $\sqrt{x}$. The upshot is that they show that
\[
S(\mc{A},\sqrt{x})\;\gtrsim\;\frac{1}{24}Q(\mc{A})
\]
up to some comparably negligible contributions. This reduces the problem of counting primes to finding a lower bound for $Q(\mc{A})$, which counts products of prime quintuplets in arithmetic progression. Indeed, the common parity here (products of 1 and 5 primes, respectively) is an artifact of the sieve process. 

By contrast, in this work we cannot so readily work with \eqref{fi sieve inequality}. The sifting density function $g(d)$ for our sequence $\mc{A}=(a_n)$ is given on primes $p$ by
\[
g(p)\;=\;\frac{1+\chi_D(p)}{p}\;+\;O\Big(\frac{1}{p^2}\Big),
\]
and the presence of the character $\chi_D(p)$ causes fluctuations that hinder one from easily claiming the one-dimensionality of the sieve problem. One possible approach would be to use the fact that we are working in the non-exceptional case (i.e., assuming Hypothesis $\text{H}(c)$ for $L(s,\chi_D)$, say) to effectively bound the sum $\sum_{p<z}g(p)$ and hence control the sieve dimension.

However, here we choose to proceed differently: by the trivial bound $g(p)\le2/p+O(1/p^2)$, we can work with a $\kappa=2$-dimensional sieve. We can no longer show that $S^-(\mc{A},z)$ is negligible for $z$ so close to $\sqrt{x}$ (only for $z\le x^{1/\beta(2)}= x^{1/4.8339\dots}$; see Section \ref{sec:beta-sieve}), and so we employ a different combinatorial identity than in \cite{friedlander2023sifting}. This identity comes from applying a second iteration of the Buchstab formula to each term $S(\mc{A}_p,p)$ in \eqref{outline:buchstab}, which gives
\begin{equation}
\label{outline:buchstab 2}
S(\mc{A},\sqrt{x}) \;=\; S(\mc{A},z) - \sum_{z\le p<\sqrt{x}}S(\mc{A}_{p},z)+\mathop{\sum\sum}_{z\le p_2<p_1<\sqrt{x}}S(\mc{A}_{p_1p_2},p_2).
\end{equation}
Rather than work with an inequality, we evaluate (nearly asymptotically) each of the three terms on the right-hand side of \eqref{outline:buchstab 2} and show that the result is positive. The first two terms are readily handled via the Fundamental Lemma, so we reduce the problem to analyzing the third term, which is
\begin{equation}
\label{our Q(A)}
\mathop{\sum\sum}_{z\le p_2<p_1<\sqrt{x}}S(\mc{A}_{p_1p_2},p_2) \;= \mathop{\sum\sum\sum}_{\substack{z\le p_2<p_1<\sqrt{x} \\ (b,P(p_2))=1}}a_{p_1p_2b}.
\end{equation}
This sum is our analogue of $Q(\mc{A})$---note that it is supported on integers which are products of three (almost-) primes, the same parity as in $Q(\mc{A})$. 

Friedlander and Iwaniec handle $Q(\mc{A})$ via a multiplicative analogue of a additive ternary problem treated by the classical circle method. They use Dirichlet characters $\chi\smod{q}$ to decouple the prime variables $p_j$. After removing the contribution of the principal character (the ``major arc''), they use two of the prime variables and the orthogonality of the characters to recover the cost of opening the sum with the characters. The remaining three prime variables are used to obtain a nontrivial cancellation in the character sums over primes---importantly, they do not have need for any zero density bounds or repulsion properties of the exceptional zeros. 

We handle the sum \eqref{our Q(A)} in a similar manner, here using the class group characters $\chi\in\wh{\mc{H}}$ instead of Dirichlet characters to decouple our variables. Just as above, two variables ($p_1$ and $b$) and the orthogonality of the class group characters are used to recover the cost in using these characters. This involves a type of large sieve inequality for these characters (over integers free from small prime factors) that we develop in Section \ref{sec:ls inequality}. For nontrivial cancellation in a character sum over the final prime variable $p_2$, we apply the explicit formula and use a zero-free region for the class group $L$-functions. It is a technical reason that we do not use three prime variables for this as they do in \cite{friedlander2023sifting}. While their sequence $(a_n)$ is localized dyadically, $x/2<n\le x$, ours is supported in a longer segment $x^{1-\nu}\le n\le x$. This means that we work with a longer sum over the prime variable $p_2$, which effectively localizes the dual sum over zeros in the explicit formula to essentially be supported on zeros within the classical zero-free region. It is in this way that we do not make use of any zero density estimates or repulsion effects of exceptional zeros.

\section{Acknowledgments}

This work was completed as part of the author's PhD thesis. He is deeply grateful to his advisor, Henryk Iwaniec, who provided constant support and guidance throughout this project, and who provided very insightful and helpful feedback during the writing of this article.

\section{Preliminaries}

\subsection{The beta-sieve}
\label{sec:beta-sieve}

For a nonnegative sequence of real numbers $\mc{A}=(a_n)$ we define
\[
S(\mc{A},z)\;\coloneqq\sum_{(n,P(z))=1}a_n
\]
for $z\ge2$, where $P(z)\;=\;\prod_{p<z}p$. The congruence sums for $\mc{A}$ are
\[
A_d\;\coloneqq \sum_{n\equiv0\smod{d}}a_n,
\]
which we will evaluate in the form
\begin{equation}
\label{congruence sum approximation}
A_d\;=\;g(d)X\;+\;r_d,
\end{equation}
where $g(d)$ is a multiplicative function with $0\;\le\; g(p)\;<\;1$ for prime $p$, $X$ is a smooth approximation to $A_1$, and $r_d$ is a remainder term that is small (on average over $d$) in comparison to $g(d)X$. The range of the modulus $d$ for which \eqref{congruence sum approximation} holds is called the level of distribution of the sequence $\mc{A}$. 

By the inclusion-exclusion principle, one expects that
\begin{equation}
\label{sieve expectation}
S(\mc{A},z)\;\asymp\; XV(z),
\end{equation}
where
\[
V(z)\;\coloneqq\; \prod_{p<z}(1-g(p)). 
\]
To establish the estimates \eqref{sieve expectation}, we use a sequence of sieve weights $\xi=(\xi_d)$, which are real numbers $\xi_d$ supported on squarefree integers $d$ satisfying
\[
d\mid P(z),\qquad d\le y,
\]
and we call $y$ the level of the sieve. We assume that they satisfy
\begin{equation}
\label{sieve weights bounded}
|\xi_d|\;\le\;1\quad\text{for all }d. 
\end{equation}
For sieve weights $(\xi_d)$, we put $\theta\;=\;1*\xi$; that is,
\[
\theta=(\theta_n),\qquad\theta_n\;=\;\sum_{d\mid n}\xi_d.
\]
To achieve lower- and upper-bounds as in \eqref{sieve expectation}, we use two sets of weights $(\xi_d^-)$ and $(\xi_d^+)$, called lower- and upper-bound sieve weights. We put
\begin{equation}
\label{sifted sum definition}
S^\pm(\mc{A},z)\;=\;\sum_na_n\theta_n^\pm,\quad\text{where}\quad\theta^\pm\;=\;1*\xi^\pm,
\end{equation}
and we require that
\[
\theta_n^-\;\le\;\sum_{d\mid(n,P(z))}\mu(d)\;\le\;\theta_n^+\qquad\text{for all }n,
\]
which implies that
\[
S^-(\mc{A},z)\;\le\; S(\mc{A},z)\;\le\; S^+(\mc{A},z).
\]
Finally, we say that our sifting problem has dimension at most $\kappa\ge0$ if
\begin{equation}
\label{sieve dimension condition}
\prod_{w\le p<z}(1-g(p))^{-1}\;\le\; K\Big(\frac{\log z}{\log w}\Big)^\kappa
\end{equation}
for every $2\le w<z$, for some constant $K>1$.

While many choices of sieve weights would suffice for our purposes (any that furnish a strong-enough ``fundamental lemma'' result), for concreteness we will from here on work with a specific construction of sieve weights known as the beta-sieve. These weights were first constructed by Iwaniec \cite{iwaniec1980rosser} and also appear in unpublished work of Rosser. They are of combinatorial type, and they satisfy all of the general properties discussed above, including \eqref{sieve weights bounded}---see Chapter 11 in \cite{friedlander2010opera} for a comprehensive treatment.

The main result we require about the beta-sieve weights is

\begin{proposition}[see Theorem 11.13 in \cite{friedlander2010opera}]
\label{fundamental lemma}
Let $\xi^\pm$ be the upper- and lower-bound beta-sieve weights of level $y$. Let $\mc{A}=(a_n)$ be a sequence of nonnegative reals, let $r_d$ be defined by \eqref{congruence sum approximation}, and assume that $g(d)$ satisfies \eqref{sieve dimension condition} with $\kappa\ge0$.

Let $z\ge2$ and put $s=\log y/\log z$. Define $S^\pm(\mc{A},z)$ by \eqref{sifted sum definition}, and put
\[
R^\pm(y,z)\;=\;\sum_{d\mid P(z)}\xi_d^\pm r_d.
\]
Then we have
\[
S^+(\mc{A},z) \;\le\; XV(z)\Big\{\mf{F}(s)+O((\log y)^{-1/6})\Big\}\;+\;R^+(y,z)
\]
for $s\ge\beta-1$, and
\[
S^-(\mc{A},z) \;\ge\; XV(z)\Big\{\mf{f}(s)+O((\log y)^{-1/6})\Big\}\;+\;R^-(y,z)
\]
for $s\ge\beta$, where $\beta=\beta(\kappa)$ is a specific absolute constant that depends only on the sifting dimension $\kappa$, $\mf{F}(s)$ and $\mf{f}(s)$ are the continuous solutions to the following system of differential-difference equations,
\begin{align*}
&\begin{cases}
s^\kappa\mf{F}(s)=A & \text{if }\beta-1\le s\le\beta+1, \\
s^\kappa\mf{f}(s)=B & \text{at }s=\beta,
\end{cases} \nonumber \\
&\begin{cases}
(s^\kappa\mf{F}(s))' = \kappa s^{\kappa-1}\mf{f}(s-1) & \text{if }s>\beta-1, \\
(s^\kappa\mf{f}(s))' = \kappa s^{\kappa-1}\mf{F}(s-1) & \text{if }s>\beta,
\end{cases}
\end{align*}
and $A=A(\kappa)$ and $B=B(\kappa)$ are specific absolute constants that depend only on the sifting dimension $\kappa$. As $s\to+\infty$, we have
\[
\mf{F}(s)\;=\;1+O(e^{-s})\qquad\text{and}\qquad\mf{f}(s)\;=\;1+O(e^{-s}).
\]
\end{proposition}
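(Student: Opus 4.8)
The plan is to follow the standard architecture for the beta-sieve (this is Theorem~11.13 of \cite{friedlander2010opera}), which has three parts: a combinatorial construction of the weights, a combinatorial sign inequality, and a quantitative analysis of the resulting main term. For the first part I would take $\xi^\pm = \xi^\pm(y,z,\beta)$ to be the Rosser--Iwaniec weights: writing a squarefree $d \mid P(z)$ as $d = p_1 p_2 \cdots p_r$ with $p_1 > p_2 > \cdots > p_r$, one sets $\xi_d^\pm = \mu(d)$ precisely when a nested family of truncation inequalities of the shape $p_1 \cdots p_{m-1}\, p_m^{\,\beta+1} < y$ holds for all indices $m$ of one fixed parity (the parity distinguishing the upper and lower weights), and $\xi_d^\pm = 0$ otherwise. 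With this definition the support conditions $d \mid P(z)$, $d \le y$ and the bound $|\xi_d^\pm| \le 1$ in \eqref{sieve weights bounded} are immediate, and $\beta = \beta(\kappa)$ (together with the constants $A(\kappa), B(\kappa)$ appearing in the boundary data) is chosen as a function of the dimension so that the recursion in the third step closes.

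The first substantive step is the purely combinatorial inequality $\theta_n^- \le \sum_{d \mid (n,P(z))}\mu(d) \le \theta_n^+$ for every $n$, with $\theta^\pm = 1 * \xi^\pm$; once this is in hand, summing against the nonnegative sequence $(a_n)$ gives $S^-(\mc{A},z) \le S(\mc{A},z) \le S^+(\mc{A},z)$ and, using \eqref{congruence sum approximation},
\[
S^\pm(\mc{A},z) \;=\; \sum_{d \mid P(z)} \xi_d^\pm A_d \;=\; X\, V^\pm(z,y) \;+\; R^\pm(y,z), \qquad V^\pm(z,y) \;\coloneqq\; \sum_{d \mid P(z)} \xi_d^\pm g(d),
\]
which already has the shape of the claimed estimate with $R^\pm(y,z)$ as defined in the statement. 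I would prove the combinatorial inequality by induction on the number of prime factors of $(n,P(z))$: peeling off the largest prime $p$ and comparing the Buchstab expansions of $\theta_n^\pm$ with those of $\theta_{n/p}^\pm$ reduces to fewer prime factors, and the truncation inequalities defining $\xi^\pm$ are exactly what makes each inductive step preserve the correct sign.

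It then remains to show $V^+(z,y) = V(z)\{\mf{F}(s) + O((\log y)^{-1/6})\}$ and $V^-(z,y) = V(z)\{\mf{f}(s) + O((\log y)^{-1/6})\}$ with $s = \log y/\log z$. The combinatorial structure of the weights makes $V^\pm(z,y)/V(z)$ satisfy a Buchstab-type integral recursion in which the roles of $+$ and $-$ alternate; the dimension hypothesis \eqref{sieve dimension condition} bounds the size of each iterate, and $\mf{F},\mf{f}$ are \emph{defined} as the stable fixed point of this recursion, which is precisely why they satisfy the stated differential-difference system with the boundary conditions at $s=\beta\mp1,\beta$. The error $O((\log y)^{-1/6})$ comes from truncating the iteration after a suitable number ($\asymp \log\log y$) of steps: the tail past the truncation decays geometrically in $s$ while the error accumulated per step is $O((\log y)^{-1})$, and optimizing the depth of truncation produces the displayed power of $\log y$. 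Finally the asymptotics $\mf{F}(s),\mf{f}(s) = 1 + O(e^{-s})$ follow from the standard analysis of the limiting DDE system (via its adjoint equation, or a Laplace-transform representation of the solutions). The main obstacle is the bookkeeping that ties the three steps together: the constants $\beta, A, B$ must be chosen so that the inductive sign inequality and the convergence of the recursion hold simultaneously, and the quantitative control of the iterates in the third step --- getting a genuine power-saving error term rather than just $o(1)$ --- is the delicate part, requiring the full strength of \eqref{sieve dimension condition} uniformly over the range $2 \le w < z$.
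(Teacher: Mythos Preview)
The paper does not prove this proposition at all: it is stated with the attribution ``see Theorem~11.13 in \cite{friedlander2010opera}'' and used as a black box throughout. Your outline is a faithful sketch of the argument in Chapter~11 of that reference (construction of the Rosser--Iwaniec weights, the combinatorial sign inequality for $\theta^\pm$, and the Buchstab-type recursion for $V^\pm(z,y)/V(z)$ converging to the functions $\mf F,\mf f$), so there is no discrepancy to report --- you have simply supplied what the paper deliberately omitted.
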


We will only apply Proposition \ref{fundamental lemma} in the case $\kappa=2$. In this case we have $B=0$; in fact, $B(\kappa)=0$ if $\kappa\ge1/2$. On the other hand, it is nontrivial to compute the values of $\beta(\kappa)$ and $A(\kappa)$ for $k\ge1/2$; see Chapter \S11.19 in \cite{friedlander2010opera} for a discussion of this and a number of useful inequalities. In particular, they provide a table of numerical values of $\beta$ and $A$ for specific values of $\kappa\ge1/2$ that were computed by Sara Blight in 2009 and confirmed by Alastair J. Irving in 2014 (who also corrected one value in the table). When $\kappa=2$, we have
\[
\beta(2)\;=\;4.8339865967\dots\qquad\text{and}\qquad A(2)\;=\;43.4968874616\dots. 
\]

\begin{remark}
Since $|\xi_d^\pm|\le1$ for the beta-sieve weights, $R^\pm$ are bounded by
\[
R(y,z)\;=\sum_{\substack{d\mid P(z) \\ d\le y}}|r_d|.
\]
We will always bound the sieve remainder terms absolutely in this work; we have no need to extract additional cancellation from among these terms.
\end{remark}

\subsection{Class group $L$-functions}

Given a character $\chi$ of the class group $\mc{H}$, we define the associated $L$-function by
\[
L_K(s,\chi)\;=\;\sum_{\mf{a}}\chi(\mf{a})(\N\mf{a})^{-s} \;=\; \sum_{n\ge1}\lambda_\chi(n)n^{-s},
\]
the first sum being taken over all nonzero integral ideals $\mf{a}$ of $\mc{O}_K$. These functions are entire except in the case that the character is the trivial one, $\chi=\chi_0$; in this case, the above $L$-function is the Dedekind zeta function associated to the field $K$, 
\[
L_K(s,\chi_0)\;=\;\zeta_K(s)\;=\;\zeta(s)L(s,\chi_D),
\]
where $L(s,\chi_D)$ is the Dirichlet $L$-function associated to the Kronecker symbol $\chi_D$. Note that $\chi_D$ is primitive, since $-D$ is a fundamental discriminant.

The functions
\[
f_\chi(z)\;=\;h\delta(\chi)+\sum_{n\ge1}\lambda_\chi(n)\e(nz)
\]
are modular forms of weight 1 for the group $\Gamma_0(D)$ with nebentypus $\chi_D$. When $\chi\ne\chi_0$, they are Hecke eigencuspforms, and in fact they are newforms because the character $\chi_D$ is primitive. Thus it follows that the coefficients satisfy the Hecke relations
\begin{equation}
\label{Hecke relations}
\lambda_\chi(dm)\;=\sum_{q\mid(d,m)}\mu(q)\chi_D(q)\lambda_\chi\Big(\frac{d}{q}\Big)\lambda_\chi\Big(\frac{m}{q}\Big)\qquad\text{for all integers }d,m\ge1.
\end{equation}
A convenient reference for these facts is \cite{iwaniec1997topics}; see in particular \S6.6. 
The class group $L$-functions are self-dual in the sense that
\[
L_K(s,\con{\chi})=L_K(s,\chi),
\]
since for all $n\ge1$ we have
\[
\con{\lambda_\chi}(n)=\lambda_\chi(n)
\]
even though the character $\chi\in\wh{\mc{H}}$ need not be real. 
The completed $L$-functions
\[
\Lambda_K(s,\chi)=\gamma(s)D^{s/2}L_K(s,\chi),\qquad \gamma(s)\coloneqq(2\pi)^{-s}\Gamma(s),
\]
satisfy the functional equation (with root number $\eps=1$)
\begin{equation}
\label{functional equation}
\Lambda_K(s,\chi)=\Lambda_K(1-s,\chi).
\end{equation}

\subsection{The explicit formula and zeros of $L_K(s,\chi)$}
\label{chapter zeros}

By logarithmic differentiation of the Euler products
\[
L_K(s,\chi)=\prod_\mf{p}(1-\chi(\mf{p})(\N\mf{p})^{-s})^{-1}=\prod_p(1-\lambda_\chi(p)p^{-s}+\chi_D(p)p^{-2s})^{-1},
\]
we get
\[
-\frac{L_K'}{L_K}(s,\chi)=\sum_{\mf{a}}\Lambda_\chi(\mf{a})(\N\mf{a})^{-s} = \sum_{n\ge1}\Lambda_\chi(n)n^{-s},
\]
where (by a slight abuse of notation)
\[
\Lambda_\chi(\mf{a}) =
\begin{cases}
\chi(\mf{a})\log\N\mf{p} & \text{ if }\mf{a}=\mf{p}^k,\\
0 & \text{otherwise,}
\end{cases}
\qquad \text{and} \qquad
\Lambda_\chi(n) = \sum_{\N\mf{a}=n}\Lambda_\chi(\mf{a}).
\]
Thus $\Lambda_\chi(\mf{a})$ is supported on powers of prime ideals $\mf{p}$, and $\Lambda_\chi(n)$ is supported on powers of (rational) primes $p$. Note that for a rational prime $p$ we have
\[
\Lambda_\chi(p)=\lambda_\chi(p)\log p.
\] 
Using standard arguments (see Theorem 5.11 in \cite{iwaniec2004analytic}, for instance), we have

\begin{lemma}
Let $\Phi$ be a smooth, compactly supported function on $\R^+$ with Mellin transform $\wt{\Phi}$. Then we have
\begin{align}
\sum_{n\ge1}\Lambda_\chi(n)&\Phi(n) \;=\; \wt{\Phi}(1)\delta(\chi) - \sum_\rho\wt{\Phi}(\rho) + \Phi(1)\log D \nonumber \\
\label{explicit formula}
&+\sum_{n\ge1}\frac{\Lambda_\chi(n)}{n}\Phi\Big(\frac{1}{n}\Big)+\frac{1}{2\pi i}\int_{(3/2)}\wt{\Phi}(1-s)\Big(\frac{\gamma'}{\gamma}(s)+\frac{\gamma'}{\gamma}(1-s)\Big)\diff s,
\end{align}
where the sum over $\rho$ is taken over all nontrivial zeros of $L_K(s,\chi)$.
\end{lemma}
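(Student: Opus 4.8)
The plan is to establish \eqref{explicit formula} by the standard contour-integration argument, applied to the logarithmic derivative $-L_K'/L_K(s,\chi)=\sum_{n\ge1}\Lambda_\chi(n)n^{-s}$ (which converges absolutely for $\Re s>1$), in the manner of the proof of Theorem 5.11 in \cite{iwaniec2004analytic}. Since $\Phi$ is smooth and compactly supported in $\R^+$, its Mellin transform $\wt\Phi$ is entire and decays faster than any power of $|\Im s|$ in every vertical strip. Hence, expanding $-L_K'/L_K(s,\chi)$ as its Dirichlet series on the line $\Re s=3/2$ and integrating term by term — the interchange being legitimate because $\int_{(3/2)}|\wt\Phi(s)|\,|\diff s|<\infty$ and $\sum_{n\ge1}|\Lambda_\chi(n)|n^{-3/2}<\infty$ — Mellin inversion gives
\[
\sum_{n\ge1}\Lambda_\chi(n)\Phi(n)\;=\;\frac{1}{2\pi i}\int_{(3/2)}\wt\Phi(s)\Big(-\frac{L_K'}{L_K}(s,\chi)\Big)\diff s.
\]

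Next I would shift the contour from $\Re s=3/2$ to $\Re s=-1/2$. Integrating around the rectangle with corners $3/2\pm iT$ and $-1/2\pm iT$ and letting $T\to\infty$ through a sequence of heights kept at distance $\gg 1/\log\big(D(|T|+2)\big)$ from the ordinate of every zero of $L_K(s,\chi)$, the two horizontal sides drop out: on such heights one has the standard bound $L_K'/L_K(\sigma+iT,\chi)\ll\big(\log(D(|T|+2))\big)^2$, uniformly for $-1/2\le\sigma\le3/2$, while $\wt\Phi$ decays super-polynomially. The vertical integral along $\Re s=-1/2$ converges absolutely, since no zero of $L_K(s,\chi)$ lies on that line and there $-L_K'/L_K(s,\chi)\ll\log\big(D(|\Im s|+2)\big)$ by the functional equation. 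By Cauchy's theorem the shift picks up the residue $\wt\Phi(1)\delta(\chi)$ at $s=1$ — nonzero only for $\chi=\chi_0$, where $\zeta_K=L_K(\,\cdot\,,\chi_0)$ has a simple pole and $-\zeta_K'/\zeta_K$ has residue $1$ — together with $-\sum_\rho\wt\Phi(\rho)$, coming from the zeros $\rho$ of $L_K(s,\chi)$ with $-1/2<\Re\rho<3/2$, counted with multiplicity (the residue of $\wt\Phi(s)(-L_K'/L_K(s,\chi))$ at a zero of order $m$ being $-m\,\wt\Phi(\rho)$).

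It remains to treat the integral over the line $\Re s=-1/2$, which is where the functional equation \eqref{functional equation} enters. Logarithmic differentiation of $\Lambda_K(s,\chi)=\Lambda_K(1-s,\chi)$, with $\Lambda_K(s,\chi)=\gamma(s)D^{s/2}L_K(s,\chi)$, gives the identity of meromorphic functions
\[
-\frac{L_K'}{L_K}(s,\chi)\;=\;\frac{\gamma'}{\gamma}(s)+\frac{\gamma'}{\gamma}(1-s)+\log D+\frac{L_K'}{L_K}(1-s,\chi).
\]
I would substitute this into the integral and handle the four resulting pieces separately. The constant $\log D$ gives $\Phi(1)\log D$ by Mellin inversion at the point $1$. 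The term $\frac{L_K'}{L_K}(1-s,\chi)=-\sum_{n\ge1}\Lambda_\chi(n)n^{s-1}$ — absolutely convergent on $\Re s=-1/2$, since $\Re(1-s)=3/2$ — contributes, after interchanging the order and Mellin-inverting at the points $1/n$, the dual sum $\sum_{n\ge1}\frac{\Lambda_\chi(n)}{n}\Phi(1/n)$. And the two $\gamma'/\gamma$ terms, after the change of variable $s\mapsto 1-s$ (which carries the line $\Re s=-1/2$ to $\Re s=3/2$), recombine into
\[
\frac{1}{2\pi i}\int_{(3/2)}\wt\Phi(1-s)\Big(\frac{\gamma'}{\gamma}(s)+\frac{\gamma'}{\gamma}(1-s)\Big)\diff s.
\]
Combining the residues with these four contributions yields \eqref{explicit formula}.

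The one step demanding genuine care is the justification of the contour shift: one must know that $L_K'/L_K(s,\chi)$ grows at most polynomially in $|\Im s|$ away from its zeros and that heights $T$ with the stated separation from the zeros exist. Both are standard consequences of the Hadamard factorization of $L_K(s,\chi)$ together with the zero-counting estimate $N(T+1,\chi)-N(T,\chi)\ll\log\big(D(|T|+2)\big)$ for the number of zeros in a unit window, and I would quote these rather than reprove them. Everything else — the two interchanges of summation and integration, and the Mellin inversions at $1$ and at the $1/n$ — is routine, all controlled by the super-polynomial decay of $\wt\Phi$ together with the absolute convergence of $\sum_{n\ge1}|\Lambda_\chi(n)|n^{-3/2}$.
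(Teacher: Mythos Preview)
Your argument is exactly the standard contour-shift derivation the paper invokes (the paper gives no proof of its own, only a reference to Theorem~5.11 of \cite{iwaniec2004analytic}), so the approach matches. One small bookkeeping point worth tidying: when you shift to $\Re s=-1/2$ you pass through $s=0$, where for $\chi\neq\chi_0$ the function $L_K(s,\chi)$ has a \emph{trivial} zero (since $\gamma(s)=(2\pi)^{-s}\Gamma(s)$ has a pole there and $\Lambda_K(s,\chi)$ is entire); your phrase ``zeros $\rho$ with $-1/2<\Re\rho<3/2$'' silently includes this contribution, whereas the lemma's sum is over nontrivial zeros only, so in a fully detailed write-up you would need to separate it out (it is harmless in the paper's application, where the support of $\Phi$ kills $\wt\Phi(0)$ along with $\Phi(1)$ and the dual sum). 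Also double-check the sign you assign to the dual sum: carrying the minus in $\frac{L_K'}{L_K}(1-s,\chi)=-\sum_n\Lambda_\chi(n)n^{s-1}$ through the Mellin inversion gives $-\sum_n\frac{\Lambda_\chi(n)}{n}\Phi(1/n)$, not $+$; this does not affect the downstream application either, but it is worth getting right.
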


We will apply this formula with test functions $\Phi$ that have the form
\begin{equation}
\label{from phi to Phi}
\Phi(t)=\frac{1}{t\log t}\phi\Big(\frac{\log t}{\log x}\Big),
\end{equation}
where $\phi(u)$ is a compactly supported function on $\R^+$. In this case we have
\begin{proposition}
\label{proposition explicit formula in log scale}
Let $\phi$ be a smooth function supported on $[\alpha_1,\alpha_2]\subset[0,1]$. Putting $\theta=\log D/\log x$, suppose that $\theta<2\alpha_1-\alpha_2$. Then we have
\begin{equation}
\label{explicit formula in log scale}
\sum_p\frac{\lambda_\chi(p)}{p}\phi\Big(\frac{\log p}{\log x}\Big)\;=\;\wt{\Phi}(1)\delta(\chi)\;-\;\sum_\rho\wt{\Phi}(\rho)\;+\;O\Big(\frac{1}{h\log x}\Big),
\end{equation}
where $h=h(D)$ is the class number, $\Phi$ is defined as in \eqref{from phi to Phi}, and $\rho$ runs over all nontrivial zeros of $L_K(s,\chi)$. The implied constant depends only on $\phi$. 
\end{proposition}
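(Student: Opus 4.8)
The plan is to start from the exact explicit formula \eqref{explicit formula} applied to the test function $\Phi$ of the shape \eqref{from phi to Phi}, and show that every term on the right-hand side other than $\wt{\Phi}(1)\delta(\chi)$ and $-\sum_\rho\wt{\Phi}(\rho)$ is $O(1/(h\log x))$. The left-hand side of \eqref{explicit formula} is $\sum_{n\ge1}\Lambda_\chi(n)\Phi(n)$; the contribution of the prime powers $n=p^k$ with $k\ge2$ is a harmless $O(1/\log x)$ (indeed much smaller), so after dividing by the normalization implicit in $\Phi$, the $k=1$ piece is exactly $\sum_p\frac{\lambda_\chi(p)}{p}\phi(\log p/\log x)$, as claimed. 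It remains to dispose of the three remaining terms: $\Phi(1)\log D$, the ``dual'' sum $\sum_{n\ge1}\frac{\Lambda_\chi(n)}{n}\Phi(1/n)$, and the archimedean integral $\frac{1}{2\pi i}\int_{(3/2)}\wt{\Phi}(1-s)\big(\tfrac{\gamma'}{\gamma}(s)+\tfrac{\gamma'}{\gamma}(1-s)\big)\diff s$.

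First I would observe that $\Phi$ is supported where $\log t/\log x \in [\alpha_1,\alpha_2]$, i.e. on $t \in [x^{\alpha_1}, x^{\alpha_2}]$, a region bounded away from $t=1$ since $\alpha_1 > 0$. Hence $\Phi(1)=0$ and $\Phi(1/n)=0$ for all $n\ge1$, killing both $\Phi(1)\log D$ and the entire dual sum outright — this is the role of the support condition $\alpha_1>0$. For the archimedean integral, I would shift the contour and use Stirling's formula for $\gamma'/\gamma(s)=\log(2\pi)^{-1}+\Gamma'/\Gamma(s)$: the logarithmic derivative grows only like $\log|s|$, while $\wt{\Phi}(1-s)$ decays rapidly in vertical strips because $\Phi$ is smooth and compactly supported (integration by parts in the Mellin transform gives $\wt{\Phi}(s) \ll_{\phi,k} (1+|s|)^{-k}(\log x)^{k}$ type bounds after accounting for the $\log x$ scaling). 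One must track the size of $\wt{\Phi}$ carefully: from \eqref{from phi to Phi}, $\wt{\Phi}(s) = \int_0^\infty t^{s-1}\cdot\frac{1}{t\log t}\phi(\log t/\log x)\diff t$, and substituting $t = x^u$ turns this into $\frac{1}{\log x}\int_{\alpha_1}^{\alpha_2} x^{u(s-1)}\cdot\frac{1}{u}\phi(u)\frac{\diff u}{1} $ — so $\wt{\Phi}$ carries an overall factor $1/\log x$, and on the line $\Re s = 1$ (or wherever we evaluate) it is $O(1/\log x)$ with rapid decay in $\Im s$ coming from the oscillation $x^{iu\,\Im s}$ and repeated integration by parts in $u$. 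The factor $1/h$ should be inserted by comparing against the trivial estimate: since $\wt{\Phi}(s)\ll_{\phi} 1/\log x$ crudely, and one expects to extract an extra $1/h$ — here I would use the class number formula $h = \frac{1}{\pi}\sqrt{D}\,L(1,\chi_D)$ together with $\log D \ll \log x$ (from $\theta < 2\alpha_1-\alpha_2 \le 1$, so $x \ge D$) and Siegel-type or trivial lower bounds; more honestly, the $1/h$ is genuinely there because the natural main term $\wt\Phi(1)\delta(\chi)$ is itself of size $\asymp \wt f(0)/\log x$ but the \emph{error} we want is smaller by a factor $h$, which suggests instead that the archimedean integral should be bounded by $O(\log D/\log x)\cdot\wt\Phi(\text{typical})$ and then one shows $\log D/\log x \ll 1/h$ fails in general — so more likely the intended reading is that the $O(1/(h\log x))$ absorbs a $\log D \cdot \wt\Phi$ term using $\theta$ small, i.e. $\log D = \theta \log x$ with the archimedean contribution $\ll \theta \cdot (\text{size of }\wt\Phi(1)) \ll \theta\,\wt{f}(0)$, and separately $\wt{f}(0) \ll 1/(h\log x)\cdot(\text{something})$ — I would need to check the bookkeeping against \eqref{definition of a_n}, but the point is that everything is controlled by the support condition plus Stirling plus the smoothness of $\phi$.

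The main obstacle I anticipate is precisely this bookkeeping of the $1/h$ factor in the error term: showing the archimedean integral and prime-power tail are not merely $O(1/\log x)$ but genuinely $O(1/(h\log x))$. I expect this is handled by noting that the whole identity \eqref{explicit formula}, once one divides through by the relevant normalizing constant built into $a_n$ (which carries a $1/h$ from $\lambda_\mc{C}(n) = \frac1h\sum_\chi \bar\chi(\mc C)\lambda_\chi(n)$ in \eqref{orthogonality of characters}), is already an identity at scale $1/h$; alternatively, the archimedean term genuinely contributes at size $\ll \log D /\log x$ times a Mellin transform value, and the hypothesis $\theta < 2\alpha_1 - \alpha_2$ is exactly what forces $\log D$ to be a small enough fraction of $\log x$ that this is swallowed — combined with the standard bound $h \ll \sqrt D \log D \ll x^{1/2}$, which is compatible with an error of the stated shape. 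Once the three stray terms are shown to be $O(1/(h\log x))$ and the contour is moved to pick up the zeros $\rho$, what remains is exactly \eqref{explicit formula in log scale}. The smoothness and compact support of $\phi$ (hence of $\Phi$), together with Stirling, do all the analytic work; the arithmetic input is only the functional equation \eqref{functional equation} and the conductor $D$ already packaged into \eqref{explicit formula}.
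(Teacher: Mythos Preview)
Your overall plan matches the paper's: apply the explicit formula \eqref{explicit formula} to $\Phi$, observe that the support of $\phi$ on $[\alpha_1,\alpha_2]\subset(0,1]$ forces $\Phi(1)=0$ and $\Phi(1/n)=0$ for all $n\ge1$ (killing the third and fourth terms outright), then bound the archimedean integral by integration by parts and Stirling, and estimate the prime-power tail on the left-hand side trivially. That is exactly what the paper does.

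However, your treatment of the $1/h$ in the error term is a genuine gap, and your speculations about it are off-track. The proposition is stated for a single character $\chi$, not for $\lambda_{\mc C}$, so there is no hidden $1/h$ normalization to ``divide through by''; nor does the error arise as $\theta$ times some Mellin value. The mechanism is much simpler and purely a matter of size: both the archimedean integral and the prime-power contribution save a \emph{power} of $x$. Concretely, on the line $\Re s=3/2$ one has $\wt\Phi(1-s)=\int_{\alpha_1}^{\alpha_2}x^{-us}u^{-1}\phi(u)\,du\ll x^{-3\alpha_1/2}$ (with rapid vertical decay from integration by parts), and the $k\ge2$ prime-power contribution is $\ll x^{-\alpha_1/2}$. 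Since $h\ll D^{1/2}\log D\ll x^{\theta/2}\log x$, the hypothesis $\theta<2\alpha_1-\alpha_2\le\alpha_1$ ensures $x^{-\alpha_1/2}\ll x^{-\theta/2}(\log x)^{-2}\ll 1/(h\log x)$, and the archimedean piece is even smaller. This is why the paper says the hypothesis is used ``trivially'' for the prime-power estimate---not for the archimedean term as you suggest.

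One small computational slip: the substitution $t=x^u$ in $\wt\Phi(s)$ gives $\wt\Phi(s)=\int_{\alpha_1}^{\alpha_2}x^{u(s-1)}u^{-1}\phi(u)\,du$ with \emph{no} residual $1/\log x$ factor (the Jacobian $\log x$ cancels against the $1/\log t$ in $\Phi$). In particular $\wt\Phi(1)=\int\phi(u)u^{-1}\,du$ is of order $1$, not $1/\log x$; the smallness of the error comes from the power of $x$ in $\wt\Phi(1-s)$ on $\Re s=3/2$, not from a logarithmic factor.
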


\begin{proof}
Since $\phi(u)=0$ for $u<\alpha_1$, we see that $\Phi(1/n)=0$ for all $n\ge1$, and so the third and fourth terms on the right-hand side of \eqref{explicit formula} vanish. The rest of the proof follows in a standard way: integrate by parts to estimate the integral, and estimate trivially (using the assumption $\theta<2\alpha_1-\alpha_2$) the contribution of prime powers to the left-hand side of \eqref{explicit formula}.
\end{proof}

Finally, we record the following result that we will use in Subsection \ref{Evaluating V} to estimate a sum over the zeros $\rho$ of $L_K(s,\chi)$. 

\begin{proposition}
\label{prop hyp HC for class group l func}
Suppose that the Dirichlet $L$-function $L(s,\chi_D)$ satisfies Hypothesis $\text{H}(c)$ with $c\le1/12$. Then each of the class group $L$-functions $L_K(s,\chi)$ satisfy Hypothesis $\text{H}(c)$ as well.
\end{proposition}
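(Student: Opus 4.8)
The plan is to prove Hypothesis $\text{H}(c)$ for each $L_K(s,\chi)$ by the classical de la Vallée Poussin argument, treating the trivial and nontrivial characters separately. For $\chi=\chi_0$ there is nothing to do beyond unwinding definitions: $L_K(s,\chi_0)=\zeta(s)L(s,\chi_D)$ has conductor $D$, and a zero $\rho=\beta+i\gamma$ with $|\gamma|\le1$ is either a zero of $\zeta(s)$ --- impossible, since $\zeta$ has no nontrivial zero with $|\gamma|\le1$ and its trivial zeros satisfy \eqref{assumption on zeros} trivially --- or a zero of $L(s,\chi_D)$, for which $\beta\le1-c/\log D$ by hypothesis. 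So $\text{H}(c)$ holds for $L_K(s,\chi_0)$, and this is the only place the hypothesis is used.

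For $\chi\ne\chi_0$ I would first record the structural point that makes the rest work: since $D$ is prime (and $\equiv3\smod{4}$), the fundamental discriminant $-D$ has a single prime divisor, so by genus theory the $2$-torsion of $\mc{H}$ is trivial and $h$ is odd; hence $\chi$ has odd order $\ge3$, so $\chi^2\ne\chi_0$ and $L_K(s,\chi^2)$ is entire, and $\chi$ takes only odd-order roots of unity as values, so $\chi(\mf{p})=1$ at the ramified prime $\mf{p}\mid D$. Now set $F(s)=\zeta_K(s)^3L_K(s,\chi)^4L_K(s,\chi^2)$. Using the Euler products together with the identity $3+4\cos u+\cos2u=2(1+\cos u)^2\ge0$, applied prime by prime --- at a split prime $p$, writing $\chi(\mf{p})=e^{i\phi}$, the contribution at $p^k$ breaks into the two angles $u=k(\phi-\gamma\log p)$ and $v=k(\phi+\gamma\log p)$ and collapses to $2(1+\cos u)^2+2(1+\cos v)^2\ge0$, while the inert and ramified primes are checked directly --- one obtains, for $\sigma>1$ and every real $\gamma$,
\[
3\Big(-\tfrac{\zeta_K'}{\zeta_K}(\sigma)\Big)+4\,\Re\Big(-\tfrac{L_K'}{L_K}(\sigma+i\gamma,\chi)\Big)+\Re\Big(-\tfrac{L_K'}{L_K}(\sigma+2i\gamma,\chi^2)\Big)\;\ge\;0.
\]
Into this I would feed the standard estimates for $1<\sigma\le2$, $|\gamma|\le1$: the simple pole of $\zeta_K$ at $s=1$ gives $-\tfrac{\zeta_K'}{\zeta_K}(\sigma)\le\tfrac1{\sigma-1}+O(\log D)$ after discarding the nonnegative contribution of its zeros --- a possible Siegel zero of $L(s,\chi_D)$ only decreases this quantity, so no hypothesis is needed here; the Hadamard factorization gives $\Re\big(-\tfrac{L_K'}{L_K}(\sigma+it,\psi)\big)\le O(\log D)-\sum_\rho\Re(\sigma+it-\rho)^{-1}$ for the entire $L$-functions $\psi=\chi,\chi^2$; and for an assumed zero $\rho_0=\beta_0+i\gamma$ of $L_K(s,\chi)$ we retain just that term, $\sum_\rho\Re(\sigma+i\gamma-\rho)^{-1}\ge(\sigma-\beta_0)^{-1}$. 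Choosing $\sigma=1+\delta/\log D$ with $\delta$ a small absolute constant reduces the inequality to $\tfrac3{\sigma-1}-\tfrac4{\sigma-\beta_0}+O(\log D)\ge0$, which forces $\beta_0\le1-c_1/\log D$ for an absolute $c_1>0$. (The same computation at $\gamma=0$ rules out a real zero of $L_K(s,\chi)$ itself: such a zero contributes multiplicity $4$ to $F$, exceeding the order $3$ of the pole, so it is not an ``exceptional'' zero --- this is precisely where $\chi^2\ne\chi_0$ is essential, since otherwise $L_K(s,\chi^2)=\zeta_K(s)$ would contribute an extra pole and the argument would degenerate, exactly as for Siegel zeros of real Dirichlet $L$-functions.) Thus $L_K(s,\chi)$ satisfies $\text{H}(c_1)$, and the hypothesis $c\le1/12$ is present to guarantee $c\le c_1$, so $\text{H}(c)$ follows.

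The step requiring the most care is this last one: one must check that the absolute constant $c_1$ delivered by the de la Vallée Poussin argument for the degree-$2$, conductor-$D$ function $L_K(s,\chi)$ really is at least $1/12$. This means tracking the $O(1)$ and $O(\log D)$ constants in the partial-fraction/Hadamard estimates honestly --- the $\tfrac12\log D$ contributed by each conductor, the $O(1)$ from the $\Gamma$-factors and the pole --- and optimizing $\delta$ together with the choice of nonnegative trigonometric polynomial, rather than using throwaway bounds. The remaining ingredients --- the positivity verification at the inert primes (routine) and at the ramified prime (which uses $\chi(\mf{p})=1$, i.e.\ the odd-order observation), the pole of $\zeta_K$, the Hadamard factorization, and the entireness of $L_K(s,\chi^2)$ --- are entirely standard.
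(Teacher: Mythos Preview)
Your proposal is correct and follows the same conceptual route as the paper: handle $\chi=\chi_0$ via the factorization $L_K(s,\chi_0)=\zeta(s)L(s,\chi_D)$, and for $\chi\ne\chi_0$ invoke genus theory (with $D$ prime) to see that no nontrivial class group character is real, so the ``exceptional zero'' scenario is impossible and a classical zero-free region of width $c_1/\log D$ applies. The difference is only in how that zero-free region is obtained. The paper simply cites Zaman's explicit result that $\prod_\chi L_K(s,\chi)$ has at most one zero (necessarily with real $\chi$) in the region $\sigma\ge1-0.0875/(\log D+3)$, $|t|\le1$; since $0.0875>1/12$, the hypothesis $c\le1/12$ places us inside this region, and the proof is three lines. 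You instead rederive the zero-free region from scratch via the $3+4\cos u+\cos2u$ argument applied to $\zeta_K(s)^3L_K(s,\chi)^4L_K(s,\chi^2)$, which is exactly the mechanism underlying Zaman's (and earlier Fogels's) result.

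What each buys: the paper's citation is clean and immediately yields the needed numerical threshold. Your approach is self-contained and makes transparent \emph{why} primality of $D$ matters (it forces $\chi^2\ne\chi_0$, so no extra pole appears), but---as you rightly flag---it leaves the verification $c_1\ge1/12$ as a nontrivial constant-tracking exercise. The naive de la Vall\'ee Poussin bound with throwaway constants will not reach $1/12$; one needs either the refinements in Zaman's paper or to cite it anyway, so in practice the paper's shortcut is the efficient choice.
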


\begin{proof}
Zaman \cite{zaman2016explicit} has shown that for $D$ sufficiently large, the product of $L$-functions $\prod_{\chi\in\mc{H}}L_K(s,\chi)$ has at most one zero in the region
\begin{equation}
\label{zaman zfr}
\sigma\;\ge\;1-\frac{0.0875}{\log D+3},\qquad|t|\le1,
\end{equation}
and that if such a zero exists, then both it and the associated class group character $\chi$ are real. By the genus theory, the only real class group character $\chi\in\mc{H}$ is the trivial character $\chi=\chi_0$ because $D$ is prime, and in this case the Kronecker factorization theorem gives us
\[
L_K(s,\chi_0)\;=\;\zeta(s)L(s,\chi_D). 
\]
Assuming now that $L(s,\chi_D)$ satisfies Hypothesis $\text{H}(c)$, we deduce from the above that each $L_K(s,\chi)$ does as well (assuming that $c\le1/12<0.0875$).
\end{proof}

\begin{remarks}
An explicit value for $c$ as in \eqref{zaman zfr} is not necessary for our result in this article. Without an explicit value of the constant, the above result is due to Fogels \cite{fogels1962distribution}. Additionally, in \cite{zaman2016explicit} they give many other explicit results for more general Hecke $L$-functions. See also \cite{kadiri2012explicit} and \cite{ahn2014some}.

The above proof is precisely the moment where we make use of the fact that $D$ is prime. To work with general fundamental discriminants $-D$, one may adjust the hypothesis of Proposition \ref{prop hyp HC for class group l func} to read ``Suppose that for every divisor $D_1\mid D$, the Dirichlet $L$-function $L(s,\chi_{D_1})$ satisfies Hypothesis $\text{H}(c)$ with $c\le1/12$,'' and the conclusion of the Proposition would still be true. In this case, one would have to correspondingly prove a version of Theorem \ref{exceptional theorem} with a different hypothesis (i.e., ``There exists a constant $c>0$ and a divisor $D_1\mid D$ such that if $L(s,\chi_{D_1})$ has a real zero $\beta$\dots''), which we have chosen not to do here for the sake of a cleaner exposition.
\end{remarks}

\section{The congruence sums}

In this section, we consider the sequence $\mc{A}=(a_n)$ defined in \eqref{definition of a_n} and evaluate the associated congruence sums,
\[
A_d\;=\;\sum_{n\equiv0\smod{d}}a_n\;=\;\sum_{n\equiv0\smod{d}}\frac{1}{n}\lambda_\mc{C}(n)f\Big(\frac{\log n}{\log x}\Big).
\]
Expressing $\lambda_\mc{C}$ in terms of $\lambda_\chi$ by \eqref{orthogonality of characters}, this is accomplished via

\begin{proposition}
\label{corollary summation formula over progressions}
Let $\chi\in\wh{\mc{H}}$ be a class group character. Let $\phi$ be a smooth function supported on $[\alpha_1,\alpha_2]\subset\R^+$, and let $d\le x^{\alpha_1}/D^{3/2}(\log x)^{A+2}$ for some $A>0$. Then
\begin{align*}
\sum_{n\equiv0\smod{d}}\frac{1}{n}\lambda_\chi(n)\phi\Big(\frac{\log n}{\log x}\Big)\;=\;g(d)\cdot(L(1,&\chi_D)\wh{\phi}(0)\log x)\cdot\delta(\chi) \\
&+\;O\Big(\frac{\tau(d)^2}{d}D^{-1/2}(\log x)^{-A}\Big),\nonumber
\end{align*}
where $g(d)$ is the multiplicative function given by
\begin{equation}
\label{density function}
g(d)=\frac{1}{d}\sum_{q\mid d}\frac{\mu(q)}{q}\chi_D(q)\lambda_{\chi_0}\Big(\frac{d}{q}\Big),
\end{equation}
and $\delta(\chi)=1$ if $\chi=\chi_0$, and $\delta(\chi)=0$ otherwise.
\end{proposition}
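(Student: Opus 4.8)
The plan is to evaluate the sum by Mellin inversion, shift the contour past the simple pole at $s=1$ to extract the main term as a residue, and bound what remains by a convexity estimate for $L_K(s,\chi)$; the hypothesis on the size of $d$ is precisely what makes the remainder take the stated shape.

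First I would introduce $\Psi(t)=t^{-1}\phi(\log t/\log x)$, whose Mellin transform is
\[
\wt{\Psi}(s)\;=\;\log x\int_{\R}\phi(u)\,e^{(s-1)u\log x}\,\diff u\;=\;\log x\cdot\wh{\phi}\Big(\tfrac{i(s-1)\log x}{2\pi}\Big),
\]
an entire function with $\wt{\Psi}(1)=\wh{\phi}(0)\log x$. Integrating by parts in $u$ using the smoothness of $\phi$ shows that, since $\phi$ is supported in $[\alpha_1,\alpha_2]$ with $\alpha_1>0$, on any vertical line $\Re s=\sigma<1$,
\[
|\wt{\Psi}(\sigma+it)|\;\ll_{N,\phi}\;x^{(\sigma-1)\alpha_1}\,\log x\,(1+|t|\log x)^{-N}\qquad(N\ge1),
\]
the factor $x^{(\sigma-1)\alpha_1}$ being the source of the gain in $x$. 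Next, the Hecke relations \eqref{Hecke relations} give, for $\Re s>1$,
\[
\sum_{n\equiv0\,(d)}\lambda_\chi(n)n^{-s}\;=\;d^{-s}L_K(s,\chi)\,H_\chi(d,s),\qquad H_\chi(d,s)\;=\;\sum_{q\mid d}\frac{\mu(q)\chi_D(q)\lambda_\chi(d/q)}{q^s},
\]
so the left-hand side of the Proposition equals $\frac{1}{2\pi i}\int_{(c)}\wt{\Psi}(s)d^{-s}L_K(s,\chi)H_\chi(d,s)\,\diff s$ for any $c>1$ (the $n$-sum is finite, so the interchange is immediate), and $d^{-1}H_{\chi_0}(d,1)$ is exactly the function $g(d)$ of \eqref{density function}.

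I would then move the contour to the line $\Re s=0$. For $\chi\neq\chi_0$ the integrand is holomorphic in the strip, as $L_K(s,\chi)$ is entire; for $\chi=\chi_0$ the factor $L_K(s,\chi_0)=\zeta(s)L(s,\chi_D)$ has a simple pole at $s=1$ with residue $L(1,\chi_D)$, contributing $\wt{\Psi}(1)d^{-1}L(1,\chi_D)H_{\chi_0}(d,1)=g(d)L(1,\chi_D)\wh{\phi}(0)\log x$, which is exactly the claimed main term (the horizontal segments vanish in the limit because $\wt{\Psi}$ decays rapidly while $L_K$ grows only polynomially). On the line $\Re s=0$ I would bound the integrand using the decay bound above for $\wt{\Psi}$ with $N$ large; the trivial estimate $|d^{-s}H_\chi(d,s)|=\big|\sum_{q\mid d}\mu(q)\chi_D(q)\lambda_\chi(d/q)(dq)^{-s}\big|\le\sum_{q\mid d}\tau(d/q)\le\tau(d)^2$ (using $|\lambda_\chi(m)|\le\tau(m)$ and $\tau_3(d)\le\tau(d)^2$); and the convexity bound $|L_K(it,\chi)|\ll_\eps(D(1+|t|)^2)^{1/2+\eps}$, valid for every $\chi$ (including $\chi_0$, via the convexity bounds for $\zeta$ and $L(s,\chi_D)$) as a standard consequence of the functional equation \eqref{functional equation} and the Phragm\'en--Lindel\"of principle. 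The $t$-integral then converges and one is left with a remainder $\ll_\eps x^{-\alpha_1}\tau(d)^2 D^{1/2+\eps}$.

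Finally I would invoke the hypothesis $d\le x^{\alpha_1}/(D^{3/2}(\log x)^{A+2})$, i.e. $x^{-\alpha_1}\le(dD^{3/2}(\log x)^{A+2})^{-1}$; substituting, the remainder is $\ll\frac{\tau(d)^2}{d}D^{-1+\eps}(\log x)^{-(A+2)}\ll\frac{\tau(d)^2}{d}D^{-1/2}(\log x)^{-A}$ (taking $\eps\le\tfrac12$), as claimed. The only point requiring any care is the emergence of the exponent $-1/2$ on $D$: it is produced by the bookkeeping, the convexity exponent $\tfrac12$ on the line $\Re s=0$ combining with the $D^{3/2}$ built into the admissible range of $d$ (so that $D^{1/2}\cdot D^{-3/2}=D^{-1}\le D^{-1/2}$) while the whole $x$-dependence cancels between $x^{-\alpha_1}$ and the bound on $d$. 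Everything else --- the Mellin setup, the residue, the rapid decay of $\wt{\Psi}$, the convexity input --- is routine.
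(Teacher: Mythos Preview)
Your argument is correct, but it proceeds along a different route from the paper. The paper first applies the Hecke relations at the level of the coefficients to write the sum as
\[
\frac{1}{d}\sum_{q\mid d}\frac{\mu(q)\chi_D(q)}{q}\lambda_\chi\Big(\frac{d}{q}\Big)\sum_{\ell\ge1}\frac{\lambda_\chi(\ell)}{\ell}\,\phi\Big(\frac{\log dq\ell}{\log x}\Big),
\]
then splits the $q$-sum at a parameter $Q$. For $q>Q$ it estimates trivially; for $q\le Q$ it applies Lemma~\ref{lambda chi summation in log scale}, the Voronoi-type summation formula in logarithmic scale (itself derived from the functional equation by pushing the dual contour far to the right). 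Optimizing $Q=D^{1/2}(\log x)^{A+2}$ yields the stated error.

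Your approach instead packages the Hecke relations as a factorization $d^{-s}L_K(s,\chi)H_\chi(d,s)$ of the Dirichlet series, then uses a single Mellin inversion and a contour shift to $\Re s=0$, invoking convexity for $L_K(s,\chi)$. This is more streamlined and avoids the auxiliary splitting parameter; in fact your raw error $x^{-\alpha_1}\tau(d)^2D^{1/2+\eps}$ is slightly sharper in $D$ than the target before you relax it. The paper's route, on the other hand, keeps the dual side of the functional equation explicit (the sum $\sum_n\lambda_\chi(n)H(n/D)$ in Lemma~\ref{summation formula}), which makes the arithmetic structure of the error more transparent and fits naturally with how the same summation lemma is reused elsewhere. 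Either approach is perfectly adequate here.
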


\begin{remark}
The evaluation of the congruence sums $A_d$ follows directly from the proposition above using \eqref{orthogonality of characters}. In the sieve terminology, this shows that the sequence $\mc{A}=(a_n)$ has level of distribution $y=x^{1-\nu}/D^{3/2}(\log x)^{A+2}$. 
\end{remark}

To prove Proposition \ref{corollary summation formula over progressions}, we use Lemma \ref{summation formula}, a summation formula for the harmonics $\lambda_\chi$ (see for instance (5.16) in \cite{iwaniec2004analytic}). We omit its proof, which is standard---it follows essentially from the functional equation \eqref{functional equation}.

\begin{lemma}
\label{summation formula}
Let $\chi$ be a class group character. Then for a smooth, compactly supported function $\Phi$ on $\R^+$ with Mellin transform $\wt{\Phi}$, we have
\[
\sum_{n\ge1}\lambda_\chi(n)\Phi(n)\;=\;L(1,\chi_D)\wt{\Phi}(1)\delta(\chi)+\frac{1}{\sqrt{D}}\sum_{n\ge1}\lambda_\chi(n)H\Big(\frac{n}{D}\Big),
\]
where $\delta(\chi)=1$ if $\chi=\chi_0$ the trivial character, $\delta(\chi)=0$ if $\chi\ne\chi_0$, and
\begin{equation}
\label{H(y) definition}
H(y)=\frac{1}{2\pi i}\int_{(3)}\wt{\Phi}(1-s)\;\frac{\gamma(s)}{\gamma(1-s)}\;y^{-s}\diff s,
\end{equation}
with $\gamma(s)=(2\pi)^{-s}\Gamma(s)$.
\end{lemma}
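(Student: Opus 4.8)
The plan is the standard route: unfold $\Phi$ by Mellin inversion, move the resulting contour integral past the pole of $\zeta_K$, and then apply the functional equation \eqref{functional equation} to the shifted integral. Since $\Phi$ is smooth and compactly supported away from the origin, its Mellin transform $\wt{\Phi}$ is entire and decays faster than any power of $|t|$ in vertical strips, and $\Phi(n)=\frac{1}{2\pi i}\int_{(3)}\wt{\Phi}(s)n^{-s}\diff s$ for every $n\ge1$. Summing against $\lambda_\chi(n)$ and interchanging sum and integral — justified by the absolute convergence of $\int_{(3)}|\wt{\Phi}(s)|\,|\diff s|$ together with $\sum_n|\lambda_\chi(n)|n^{-3}\le\sum_n\tau(n)n^{-3}<\infty$ — gives
\[
\sum_{n\ge1}\lambda_\chi(n)\Phi(n)\;=\;\frac{1}{2\pi i}\int_{(3)}\wt{\Phi}(s)\,L_K(s,\chi)\diff s.
\]

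Next I would shift the contour to the line $\Re s=-2$. Because $\wt{\Phi}$ is entire and decays rapidly in vertical strips, while $L_K(s,\chi)$ grows only polynomially there (by the functional equation and Phragmén--Lindelöf), the horizontal segments vanish in the limit and no poles are crossed other than a possible one at $s=1$. That pole occurs precisely when $\chi=\chi_0$, where $L_K(s,\chi_0)=\zeta_K(s)=\zeta(s)L(s,\chi_D)$ has a simple pole with $\res_{s=1}\zeta_K(s)=L(1,\chi_D)$; the residue of the integrand is then $L(1,\chi_D)\wt{\Phi}(1)$. This produces exactly the main term $L(1,\chi_D)\wt{\Phi}(1)\delta(\chi)$, and leaves $\frac{1}{2\pi i}\int_{(-2)}\wt{\Phi}(s)L_K(s,\chi)\diff s$.

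On the line $\Re s=-2$ I would insert the functional equation in the form $L_K(s,\chi)=D^{1/2-s}\frac{\gamma(1-s)}{\gamma(s)}L_K(1-s,\chi)$, which follows from \eqref{functional equation} and $\Lambda_K(s,\chi)=\gamma(s)D^{s/2}L_K(s,\chi)$. Since $\Re(1-s)=3>1$, I may expand $L_K(1-s,\chi)=\sum_{n\ge1}\lambda_\chi(n)n^{s-1}$ and interchange sum and integral again (the factor $\wt{\Phi}(s)\frac{\gamma(1-s)}{\gamma(s)}D^{1/2-s}$ is integrable on the line, and $\sum_n|\lambda_\chi(n)|n^{-3}<\infty$). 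Writing $D^{1/2-s}n^{s-1}=D^{1/2}n^{-1}(n/D)^{s}$, the $n$-th term is $D^{1/2}n^{-1}$ times $\frac{1}{2\pi i}\int_{(-2)}\wt{\Phi}(s)\frac{\gamma(1-s)}{\gamma(s)}(n/D)^{s}\diff s$; the substitution $s\mapsto1-s$ turns this last integral into $(n/D)\,H(n/D)$ with $H$ exactly as in \eqref{H(y) definition}. Summing over $n$ collapses $D^{1/2}\cdot n^{-1}\cdot(n/D)$ to $D^{-1/2}$ and yields $\frac{1}{\sqrt{D}}\sum_{n\ge1}\lambda_\chi(n)H(n/D)$, which is the asserted secondary term. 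The only steps requiring genuine care are the justification of the contour shift — namely the polynomial bound for $L_K(s,\chi)$ in vertical strips and the verification that the only pole met is the simple pole of $\zeta_K$ at $s=1$ (the trivial zeros of $L_K$ are zeros, not poles, and $\wt{\Phi}$ is entire because $\Phi$ is supported away from $0$) — and the bookkeeping of the $\gamma$-factors and powers of $D$ through the functional equation so that the outcome matches \eqref{H(y) definition} precisely; everything else is routine.
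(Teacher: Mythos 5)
Your proposal is correct: the Mellin inversion, the contour shift picking up the simple pole of $\zeta_K(s)=\zeta(s)L(s,\chi_D)$ at $s=1$ with residue $L(1,\chi_D)\wt{\Phi}(1)$, and the substitution $s\mapsto1-s$ after inserting $L_K(s,\chi)=D^{1/2-s}\frac{\gamma(1-s)}{\gamma(s)}L_K(1-s,\chi)$ all check out, and the powers of $D$ and $n$ collapse to $D^{-1/2}H(n/D)$ exactly as claimed. The paper omits the proof as standard, and what you have written is precisely the standard functional-equation argument it alludes to.
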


Next, we establish a version of the above formula in the logarithmic scale. 

\begin{lemma}
\label{lambda chi summation in log scale}
Let $\phi$ be a smooth function with support contained in $[\alpha_1,\alpha_2]\subset\R^+$. Then for any $k\le x^{\alpha_1}/D$, we have
\[
\sum_{\ell\ge1}\frac{1}{\ell}\lambda_\chi(\ell)\phi\Big(\frac{\log k\ell}{\log x}\Big)\;=\;(L(1,\chi_D)\wh{\phi}(0)\log x)\cdot\delta(\chi)+O(D^{-1/2}(\log x)^{-A})
\]
for any $A>0$, where the implied constant depends only on $\phi$ and $A$. 
\end{lemma}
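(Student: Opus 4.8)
The plan is to apply Lemma \ref{summation formula} to the test function
\[
\Phi(t)\;=\;\frac{1}{t}\,\phi\!\left(\frac{\log kt}{\log x}\right),
\]
which is smooth and compactly supported on $\R^+$ (its support is $x^{\alpha_1}/k\le t\le x^{\alpha_2}/k$, a subset of $\R^+$ since $\alpha_1>0$). Lemma \ref{summation formula} then produces a main term $L(1,\chi_D)\wt{\Phi}(1)\delta(\chi)$ and an error term $D^{-1/2}\sum_{n\ge1}\lambda_\chi(n)H(n/D)$, so the proof reduces to identifying the main term and bounding the error term by $O(D^{-1/2}(\log x)^{-A})$.

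For the main term, the change of variables $u=\log(kt)/\log x$ gives $\wt{\Phi}(1)=\int_0^\infty\Phi(t)\diff t=(\log x)\int_{-\infty}^\infty\phi(u)\diff u=(\log x)\wh{\phi}(0)$, which is exactly the asserted main term $L(1,\chi_D)\wh{\phi}(0)(\log x)\delta(\chi)$. The same substitution applied to $\wt{\Phi}(w)=\int_0^\infty t^{w-2}\phi(\log(kt)/\log x)\diff t$ yields $\wt{\Phi}(1-s)=k^s(\log x)\int_{-\infty}^\infty e^{-su\log x}\phi(u)\diff u$, which is what I will feed into the formula \eqref{H(y) definition} for $H(y)$.

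For the error term I work directly on the contour $\Re s=3$ in \eqref{H(y) definition}; no contour shift is needed. On that line $|k^s|=k^3$ and $|e^{-su\log x}|=x^{-3u}\le x^{-3\alpha_1}$ on the support of $\phi$, and integrating by parts $j$ times in $u$ (all boundary terms vanish since $\phi$ is compactly supported in the open half-line) gives $|\wt{\Phi}(1-s)|\ll_{j,\phi}k^3x^{-3\alpha_1}(\log x)^{1-j}(1+|s|)^{-j}$. Stirling's formula (or the reflection formula, writing $\gamma(s)/\gamma(1-s)=(2\pi)^{2s-1}\Gamma(s)^2\sin(\pi s)/\pi$) gives the polynomial bound $|\gamma(s)/\gamma(1-s)|\ll(1+|s|)^5$ on $\Re s=3$, while $|y^{-s}|=y^{-3}$. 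Choosing $j$ large enough in terms of $A$ so that the resulting $t$-integral converges and the power of $\log x$ is favorable, I obtain $|H(y)|\ll_{A,\phi}k^3y^{-3}x^{-3\alpha_1}(\log x)^{-A}$. Finally, since $|\lambda_\chi(n)|\le\lambda_{\chi_0}(n)\le\tau(n)$ and $\sum_n\tau(n)n^{-3}<\infty$, summing over $n$ gives $\sum_{n\ge1}|\lambda_\chi(n)|\,|H(n/D)|\ll_{A,\phi}k^3D^3x^{-3\alpha_1}(\log x)^{-A}$, and the hypothesis $k\le x^{\alpha_1}/D$ forces $k^3D^3x^{-3\alpha_1}\le1$; dividing by $\sqrt{D}$ finishes the proof.

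The only step requiring real care is the bookkeeping of the three parameters $k$, $D$, $x$ in the error term: the factor $k^s$ from the Mellin transform of $\Phi$, the factor $D^3$ arising from $y^{-s}=(n/D)^{-s}$ in the summation over $n$, and the saving $x^{-3\alpha_1}$ coming from the location of the support of $\phi$ must be arranged to collapse to a constant under the hypothesis $k\le x^{\alpha_1}/D$, so that the $(\log x)^{-A}$ saving — produced by repeated integration by parts against the polynomially growing gamma ratio — survives. Everything else (the change of variables, the Stirling bound, the convergence of $\sum\tau(n)n^{-3}$) is routine.
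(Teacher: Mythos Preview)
Your proof is correct and follows essentially the same route as the paper: apply Lemma~\ref{summation formula} with the test function $\Phi(t)=t^{-1}\phi(\log(kt)/\log x)$, identify $\wt{\Phi}(1)=\wh{\phi}(0)\log x$, bound $\wt{\Phi}(1-s)$ by repeated integration by parts, use Stirling for the gamma ratio, and sum trivially over $n$ using $|\lambda_\chi(n)|\le\tau(n)$. The only cosmetic difference is that the paper shifts the contour in \eqref{H(y) definition} to a general line $\Re s=\sigma_0$ and then lets $\sigma_0$ grow, whereas you stay on the original line $\Re s=3$ and instead let the number $j$ of integrations by parts grow; both mechanisms produce the required $(\log x)^{-A}$ saving and the bookkeeping $k^3D^3x^{-3\alpha_1}\le1$ is identical.
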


\begin{proof}
We apply Lemma \ref{summation formula} with the choice
\[
\Phi(\ell)=\frac{1}{\ell}\phi\Big(\frac{\log k\ell}{\log x}\Big).
\]
It is straightforward to verify that
\[
\wt{\Phi}(1)=\wh{\phi}(0)\log x,
\]
and by partial integration $m$ times we derive
\[
\wt{\Phi}(1-s)\;\ll\; (\log x)^{1-m}\Big(\frac{k}{x^{\alpha_1}}\Big)^\sigma(1+|s|)^{-m},
\]
where $\sigma=\Re(s)$, and $m$ is at our disposal. By Stirling's formula, we have
\[
\frac{\gamma(s)}{\gamma(1-s)}\ll t^{2\sigma-1}\qquad\text{for fixed }\sigma.
\]
Now we estimate the function $H(y)$ given by \eqref{H(y) definition}: we move the line of integration to $\Re(s)=\sigma_0$ and take $m>2\sigma_0$ to get
\[
H(y)\ll(\log x)^{1-2\sigma_0}\Big(\frac{k}{yx^{\alpha_1}}\Big)^{\sigma_0}.
\]
Now $|\lambda_\chi(n)|\le\tau(n)$, so as long as $k\le x^{\alpha_1}/D$, we get
\begin{align*}
\frac{1}{\sqrt{D}}\sum_{n\ge1}\lambda_\chi(n)H\Big(\frac{n}{D}\Big)\;&\ll\;D^{-1/2}(\log x)^{1-2\sigma_0}\Big(\frac{kD}{x^{\alpha_1}}\Big)^{\sigma_0}\sum_{n\ge1}\tau(n)n^{-\sigma_0}\\
&\ll\;D^{-1/2}(\log x)^{-A}
\end{align*}
for any given $A>0$ by taking $\sigma_0\ge2$ sufficiently large.
\end{proof}

Note that the main term in the above lemma does not depend on $k$, which is a convenient feature in the forthcoming transformations. Using the lemma above, we prove Proposition \ref{corollary summation formula over progressions}.

\begin{proof}[Proof of Proposition \ref{corollary summation formula over progressions}]
We write $n=dm$ and use the Hecke relations \eqref{Hecke relations}, and then we put $m=q\ell$ to get
\begin{align}
\label{using Hecke relations}
\sum_{n\equiv0\smod{d}}\frac{1}{n}\lambda_\chi(n)&\phi\Big(\frac{\log n}{\log x}\Big) \\
&=\;\frac{1}{d}\sum_{q\mid d}\frac{\mu(q)}{q}\chi_D(q)\lambda_\chi\Big(\frac{d}{q}\Big)\sum_{\ell\ge1}\frac{1}{\ell}\lambda_\chi(\ell)\phi\Big(\frac{\log dq\ell}{\log x}\Big). \nonumber
\end{align}
Next we split the $q$-sum according to whether $q\le Q$ or $q>Q$, with $Q$ to be chosen later. For the latter range where $q>Q$, we estimate the $\ell$-sum trivially using $|\lambda_\chi(\ell)|\le\tau(\ell)$, which gives us
\[
\sum_{\ell\ge1}\frac{1}{\ell}\tau(\ell)\phi\Big(\frac{\log qd\ell}{\log x}\Big)\quad\ll\quad (\log x)^2,
\]
whose contribution to \eqref{using Hecke relations} is
\begin{equation}
\label{range for large q lambda chi}
\frac{1}{d}\sum_{\substack{q\mid d \\ q>Q}}\Big|\frac{\mu(q)}{q}\chi_D(q)\lambda_\chi\Big(\frac{d}{q}\Big)\Big|\sum_{\ell\ge1}\frac{1}{\ell}\tau(\ell)\phi\Big(\frac{\log qd\ell}{\log x}\Big) \quad \ll \quad \frac{\tau(d)^2}{d}\frac{(\log x)^2}{Q}.
\end{equation}
In the other range where $q\le Q$, we apply Proposition \ref{lambda chi summation in log scale}, which is applicable as long as $Qd\le x^{\alpha_1}/D$, which we will arrange for with our later choice of $Q$. This gives
\[
\sum_{\ell\ge1}\frac{1}{\ell}\lambda_\chi(\ell)\phi\Big(\frac{\log qd\ell}{\log x}\Big)\;=\;(L(1,\chi_D)\wh{\phi}(0)\log x)\cdot\delta(\chi)+O(D^{-1/2}(\log x)^{-A-1}).
\]
Plugging the above into \eqref{using Hecke relations} essentially gives the expression for $g(d)$ in \eqref{density function}, except that the $q$-sum here is restricted to $q\le Q$. The range is easily extended to all $q$ up to the same error term in \eqref{range for large q lambda chi}. Thus in total we have now shown
\begin{align*}
\sum_{n\equiv0\smod{d}}\frac{1}{n}\lambda_\chi(n)\phi\Big(\frac{\log n}{\log x}\Big) \; = \; g(d)\cdot &(L(1,\chi_D)\wh{\phi}(0)\log x)\cdot\delta(\chi) \\
&+O\Big(\frac{\tau(d)^2}{d}\Big(\frac{(\log x)^2}{Q}+\frac{\log Q}{D^{1/2}(\log x)^{A+1}}\Big)\Big).
\end{align*}
Finally, choosing $Q=D^{1/2}(\log x)^{A+2}$ gives the result.
\end{proof}

\section{Sums of $\lambda_\chi$ twisted by sieve weights}

In this section, let $[\alpha_1,\alpha_2]\subset\R^+$, and let $\xi^\pm=(\xi^\pm_d)$ be the beta-sieve weights (upper- or lower-bound) of level $y\le x^{\alpha_1}/D^{3/2}(\log x)^{9}$; put $\theta^\pm=1*\xi^\pm=(\theta_n^\pm)$.

\begin{proposition}
\label{lambda chi with sieve weights}
Let $\phi$ be a smooth function supported on $[\alpha_1,\alpha_2]$, and let $\beta=\beta(2)=4.83398\dots$ be the constant from Proposition \ref{fundamental lemma}. Let $z\ge2$ and put $s=\log y/\log z$. Then for $\chi\ne\chi_0$ we have
\[
\sum_{n\ge1}\frac{\theta_n^\pm}{n}\lambda_\chi(n)\phi\Big(\frac{\log n}{\log x}\Big)\;\ll\;h^{-1}(\log x)^{-2}.
\]
For $\chi=\chi_0$ we have
\[
\sum_{n\ge1}\frac{\theta_n^+}{n}\lambda_{\chi_0}(n)\phi\Big(\frac{\log n}{\log x}\Big)\;\le\;XV(z)\Big\{\mf{F}(s)+O((\log y)^{-1/6})\Big\}
\]
for $s\ge\beta-1$, and
\[
\sum_{n\ge1}\frac{\theta_n^-}{n}\lambda_{\chi_0}(n)\phi\Big(\frac{\log n}{\log x}\Big)\;\ge\;XV(z)\Big\{\mf{f}(s)+O((\log y)^{-1/6})\Big\}
\]
for $s\ge\beta$, where
\begin{equation}
\label{X and V(z)}
X\;=\;L(1,\chi_D)\wh{\phi}(0)(\log x),\qquad V(z)\;=\;\prod_{p<z}(1-g(p)),
\end{equation}
and $g(d)$ is given by \eqref{density function}.
\end{proposition}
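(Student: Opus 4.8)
The plan is to reduce everything to the congruence sums already evaluated in Proposition~\ref{corollary summation formula over progressions}. Writing $\theta_n^\pm=\sum_{d\mid n}\xi_d^\pm$ and interchanging the order of summation,
\[
\sum_{n\ge1}\frac{\theta_n^\pm}{n}\lambda_\chi(n)\phi\Big(\frac{\log n}{\log x}\Big)=\sum_{\substack{d\mid P(z)\\ d\le y}}\xi_d^\pm\sum_{n\equiv0\smod d}\frac1n\lambda_\chi(n)\phi\Big(\frac{\log n}{\log x}\Big).
\]
Since the beta-sieve weights are supported on squarefree $d\mid P(z)$ with $d\le y\le x^{\alpha_1}/D^{3/2}(\log x)^9$, Proposition~\ref{corollary summation formula over progressions} applies to each inner sum with $A=7$ (so that $A+2=9$), giving
\[
\sum_{n\equiv0\smod d}\frac1n\lambda_\chi(n)\phi\Big(\frac{\log n}{\log x}\Big)=g(d)X\,\delta(\chi)+r_d,\qquad X=L(1,\chi_D)\wh\phi(0)\log x,
\]
with $r_d\ll\frac{\tau(d)^2}{d}D^{-1/2}(\log x)^{-7}$. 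Using $|\xi_d^\pm|\le1$ and $\sum_{d\le y}\tau(d)^2/d\ll(\log y)^4\ll(\log x)^4$, the combined contribution of the $r_d$ is, in absolute value,
\[
\sum_{\substack{d\mid P(z)\\ d\le y}}\xi_d^\pm r_d\ll D^{-1/2}(\log x)^{-3}.
\]

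For $\chi\ne\chi_0$ the main term vanishes because $\delta(\chi)=0$, so the whole sum is $\ll D^{-1/2}(\log x)^{-3}$. Since $h=\tfrac1\pi\sqrt D\,L(1,\chi_D)\ll\sqrt D\log D$ and $\log x\gg\log D$ (as $x\gg D^L$), this is $\ll h^{-1}(\log x)^{-2}$, which is the first claim.

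For $\chi=\chi_0$ the identity above becomes
\[
\sum_{n\ge1}\frac{\theta_n^\pm}{n}\lambda_{\chi_0}(n)\phi\Big(\frac{\log n}{\log x}\Big)=X\sum_{\substack{d\mid P(z)\\ d\le y}}\xi_d^\pm g(d)+\sum_{\substack{d\mid P(z)\\ d\le y}}\xi_d^\pm r_d,
\]
which is precisely the quantity $S^\pm(\mc A,z)=XV^\pm(z,y)+R^\pm(y,z)$ governed by the Fundamental Lemma, applied to the sequence $a_n=\frac1n\lambda_{\chi_0}(n)\phi(\log n/\log x)$ (nonnegative, as $\lambda_{\chi_0}=1*\chi_D\ge0$ and $\phi\ge0$) with congruence approximation $A_d=g(d)X+r_d$; note $A_1=X+r_1$, so $X$ is indeed a good smooth approximation to $A_1$. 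To invoke Proposition~\ref{fundamental lemma} with $\kappa=2$ I would first verify the dimension condition \eqref{sieve dimension condition}: from \eqref{density function} one computes $g(p)=\frac{1+\chi_D(p)}{p}-\frac{\chi_D(p)}{p^2}$, so $0\le g(p)<1$ for every prime $p$ and $g(p)\le\frac2p+\frac1{p^2}$, whence by Mertens' theorem $\prod_{w\le p<z}(1-g(p))^{-1}\ll(\log z/\log w)^2$. Proposition~\ref{fundamental lemma} then gives, with $s=\log y/\log z$, the bound $S^+(\mc A,z)\le XV(z)\{\mf F(s)+O((\log y)^{-1/6})\}+R^+(y,z)$ for $s\ge\beta-1$, and the matching lower bound with $\mf f(s)$ for $s\ge\beta$.

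It then remains to absorb $R^\pm(y,z)\ll D^{-1/2}(\log x)^{-3}$ into $XV(z)\cdot O((\log y)^{-1/6})$, and this is the only place where any real care is needed: from the class number formula $X\gg_\phi D^{-1/2}\log x$ (using $L(1,\chi_D)\ge\pi/\sqrt D$ since $h\ge1$), and from the crude bound $V(z)\gg(\log x)^{-2}$ recorded in the remarks following Theorem~\ref{nonexceptional theorem} (itself a consequence of $g(p)\le2/p+O(1/p^2)$ and Mertens), so that $XV(z)(\log y)^{-1/6}\gg D^{-1/2}(\log x)^{-7/6}$ comfortably dominates the remainder. Thus the main obstacle is not any single hard estimate but rather the bookkeeping: matching the level $y$ to the range of validity in Proposition~\ref{corollary summation formula over progressions} (which pins down $A=7$), confirming that the density $g$ yields a genuinely $2$-dimensional sieve — the step at which the oscillation of $\chi_D(p)$ is deliberately discarded — and checking that both the sieve remainder and the $O((\log y)^{-1/6})$ term are negligible against the main term $XV(z)$, which is only of constant size.
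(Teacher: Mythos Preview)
Your proof is correct and follows essentially the same route as the paper: open $\theta_n^\pm$ into $\sum_d\xi_d^\pm$, apply Proposition~\ref{corollary summation formula over progressions} with $A=7$ to each inner congruence sum, handle $\chi\ne\chi_0$ via $h\ll D^{1/2}\log D$, and for $\chi=\chi_0$ recognize the sum as $S^\pm(\mc A,z)$ for the nonnegative sequence $a_n=\tfrac1n\lambda_{\chi_0}(n)\phi(\log n/\log x)$ and invoke the Fundamental Lemma with $\kappa=2$. The paper checks the dimension condition via the factorization $1-g(p)=(1-1/p)(1-\chi_D(p)/p)\ge(1-1/p)^2$ rather than your upper bound $g(p)\le 2/p+1/p^2$, and it absorbs $R^\pm$ into $XV(z)\cdot O((\log y)^{-1/6})$ more tersely than you do, but these are cosmetic differences.
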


\begin{proof}
We have
\[
\sum_{n\ge1}\frac{\theta_n^\pm}{n}\lambda_\chi(n)\phi\Big(\frac{\log n}{\log x}\Big)\;=\;\sum_{1\le d\le y}\xi_d\sum_{n\equiv0\smod{d}}\frac{1}{n}\lambda_\chi(n)\phi\Big(\frac{\log n}{\log x}\Big).
\]
Applying Proposition \ref{corollary summation formula over progressions} (with $A=7$) for the $n$-sum on the right-hand side then shows that when $\chi\ne\chi_0$, the above is bounded by (recall that $|\xi_d|\le1$)
\begin{equation}
\label{nontrivial character contribution}
D^{-1/2}(\log x)^{-7}\sum_{1\le d\le y}|\xi_d|\frac{\tau(d)^2}{d} \;\; \ll\;\; D^{-1/2}(\log x)^{-3} \;\;\ll\;\; h^{-1}(\log x)^{-2},
\end{equation}
after using the bound $h=h(D)\ll D^{1/2}\log D$. For $\chi=\chi_0$, we observe that
\[
S^\pm(\mc{A},z)\;=\;\sum_{n\ge1}\frac{\theta_n^\pm}{n}\lambda_{\chi_0}(n)\phi\Big(\frac{\log n}{\log x}\Big)
\]
is the sifted sum for the sequence $\mc{A}=(a_n)$ given by
\[
a_n\;=\;\frac{1}{n}\lambda_{\chi_0}(n)\phi\Big(\frac{\log n}{\log x}\Big).
\]
By Proposition \ref{corollary summation formula over progressions}, the congruence sums for this sequence are
\[
A_d(x)\;=\;\sum_{n\equiv0\smod{d}}\frac{1}{n}\lambda_{\chi_0}(n)\phi\Big(\frac{\log n}{\log x}\Big) \;=\; g(d)X+r_d(x),
\]
where
\[
X\;=\;L(1,\chi_D)\wh{\phi}(0)\log x,\qquad r_d(x)\;\ll\; \frac{\tau(d)^2}{d}D^{-1/2}(\log x)^{-7},
\]
and $g(d)$ is given by \eqref{density function}. We have
\begin{equation}
\label{bound on 1-g}
1-g(p)\;=\;\Big(1-\frac{1}{p}\Big)\Big(1-\frac{\chi_D(p)}{p}\Big)\; \ge\;\Big(1-\frac{1}{p}\Big)^2,
\end{equation}
so we can take $\kappa=2$ in \eqref{sieve dimension condition} for some $K>1$.  Applying Proposition \ref{fundamental lemma} gives
\begin{align*}
S^+(\mc{A},z) &\le XV(z)\Big\{\mf{F}(s)+O((\log y)^{-1/6})\Big\}\;+\;R^+(y,z)\qquad\text{for }s\ge\beta-1, \\
S^-(\mc{A},z) &\ge XV(z)\Big\{\mf{f}(s)+O((\log y)^{-1/6})\Big\}\;+\;\;R^-(y,z)\qquad\text{for }s\ge\beta.
\end{align*}
Just as in \eqref{nontrivial character contribution}, the remainder terms $R^\pm$ are each bounded by
\[
R(y,z)\;=\;\sum_{\substack{d\mid P(z)\\d<y}}|r_d(x)|\;\ll\; D^{-1/2}(\log x)^{-3},
\]
which is covered by $O((\log y)^{-1/6})$, and the proof is complete.
\end{proof}

Here we state a corollary of the above proposition that is ready-to-use for the applications below. From here on we take $D=x^\theta$, $0<\theta<1$, and $z=x^{1/r}$.

\begin{corollary}
\label{corollary lambda chi with sieve weights}
Let $0<\eps<1/10r$, and let $\phi$ be a smooth function supported on $[\alpha_1-\eps,\alpha_2]\subset\R^+$. Let $\xi^\pm$ be the beta-sieve weights of level $y=x^{\alpha_1-\eps}/D^{3/2}$. Suppose that
\[
s\;\coloneqq\;(\alpha_1-\tfrac{3}{2}\theta)r\;\ge\;5.
\]
Then for $x$ sufficiently large we have
\begin{align*}
\sum_{n\ge1}\frac{\theta_n^\pm}{n}\lambda_\chi(n)\phi\Big(\frac{\log n}{\log x}\Big)\;&\ll\;h^{-1}(\log x)^{-2}\qquad\text{if}\quad\chi\ne\chi_0, \\
\sum_{n\ge1}\frac{\theta_n^+}{n}\lambda_{\chi_0}(n)\phi\Big(\frac{\log n}{\log x}\Big)\;&\le\;XV(z)\Big\{1+O(e^{-s})\Big\},\quad\text{and} \\
\sum_{n\ge1}\frac{\theta_n^-}{n}\lambda_{\chi_0}(n)\phi\Big(\frac{\log n}{\log x}\Big)\;&\ge\;XV(z)\Big\{1+O(e^{-s})\Big\},
\end{align*}
where $X$ and $V(z)$ are given by \eqref{X and V(z)}. Furthermore, if Hypothesis $\text{H}(c)$ holds for $L(s,\chi_D)$, then we can replace $XV(z)$ in the above with
\[
XV(z)\;=\;e^{-\gamma}\wh{\phi}(0)r\Big\{1+O(e^{-c/3r\theta})\Big\}.
\]
\end{corollary}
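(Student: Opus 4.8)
The plan is to deduce the corollary directly from Proposition \ref{lambda chi with sieve weights} by checking that the hypotheses transfer, then feeding in the asymptotics of $\mf{F}$, $\mf{f}$ and $V(z)$. First I would verify the level-of-distribution hypothesis: with $y=x^{\alpha_1-\eps}/D^{3/2}$ and $D=x^\theta$, we have $\log y/\log x=\alpha_1-\eps-\tfrac{3}{2}\theta$, and since $\eps<1/10r$ this is at least $(\alpha_1-\tfrac32\theta)-\tfrac{1}{10r}$; the constraint $y\le x^{\alpha_1}/D^{3/2}(\log x)^9$ is clear for $x$ large (the $x^{-\eps}$ beats the log power). So Proposition \ref{lambda chi with sieve weights} applies with the function $\phi$ now supported on $[\alpha_1-\eps,\alpha_2]$ in place of $[\alpha_1,\alpha_2]$. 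The nontrivial-character bound $\ll h^{-1}(\log x)^{-2}$ is then immediate.

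Next, with $z=x^{1/r}$ we compute $s=\log y/\log z=r(\alpha_1-\eps-\tfrac32\theta)$, which differs from the quantity $(\alpha_1-\tfrac32\theta)r$ in the hypothesis by only $r\eps<\tfrac{1}{10}$; so $s\ge 5-\tfrac1{10}>\beta-1$ using $\beta=\beta(2)=4.8339\dots$, and in fact the hypothesis $s\ge5>\beta$ is what we need for the lower-bound sieve. (One should be slightly careful here: I would either absorb the $r\eps$ discrepancy into the statement or simply note that $s\ge 5-1/10>\beta$ so both the $\mf{F}$ and $\mf{f}$ estimates are valid.) Then I invoke the tail asymptotics from Proposition \ref{fundamental lemma}, $\mf{F}(s)=1+O(e^{-s})$ and $\mf{f}(s)=1+O(e^{-s})$; since $s\ge5$ is bounded below by an absolute constant while $\log y\asymp\log x\to\infty$, the error $O((\log y)^{-1/6})$ is dominated by an absolute constant times... — actually it is the reverse, so I would write the combined error as $O(e^{-s})+O((\log y)^{-1/6})$ and note that for $x$ sufficiently large (depending on nothing but absolute constants, since $r$, $\theta$ enter only through $s$ which is bounded) the second term is $\le$ the first, or simply state the error as $O(e^{-s})$ valid for $x$ large. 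This gives the two displayed bounds on the $\chi_0$-sums with $XV(z)\{1+O(e^{-s})\}$.

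For the final assertion, assume Hypothesis $\text{H}(c)$ for $L(s,\chi_D)$. The factorization $1-g(p)=(1-1/p)(1-\chi_D(p)/p)$ from \eqref{bound on 1-g} gives
\[
V(z)=\prod_{p<z}\Big(1-\frac1p\Big)\prod_{p<z}\Big(1-\frac{\chi_D(p)}{p}\Big)^{1},
\]
and by Mertens' theorem the first product is $(e^{-\gamma}+o(1))/\log z=(e^{-\gamma}+o(1))r/\log x$. For the second product I would argue — this is the one genuinely nontrivial input — that under $\text{H}(c)$ one has $\prod_{p<z}(1-\chi_D(p)/p)^{-1}=L(1,\chi_D)\{1+O(e^{-c/3r\theta})\}$; this is precisely the Euler-product approximation proved in Appendix \ref{section euler products} under the assumption that $\chi_D$ is non-exceptional (the relevant scale is $\log z=(\log x)/r=(\log D)/(r\theta)$, so the classical zero-free region at height $c/\log D$ translates into the quoted error $e^{-c/3r\theta}$). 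Combining, $V(z)=e^{-\gamma}r(\log x)^{-1}L(1,\chi_D)^{-1}\{1+O(e^{-c/3r\theta})\}$, and multiplying by $X=L(1,\chi_D)\wh\phi(0)\log x$ the $L(1,\chi_D)$ and $\log x$ cancel, leaving $XV(z)=e^{-\gamma}\wh\phi(0)r\{1+O(e^{-c/3r\theta})\}$, as claimed. The main obstacle is this last step — controlling $\prod_{p<z}(1-\chi_D(p)/p)$ in terms of $L(1,\chi_D)$ with the stated error — but it is exactly the content of the appendix, so here it suffices to cite it and track how the parameters $z=x^{1/r}$, $D=x^\theta$ enter the error exponent.
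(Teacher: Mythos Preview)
Your proposal is correct and follows essentially the same route as the paper: apply Proposition \ref{lambda chi with sieve weights} with the support shifted to $[\alpha_1-\eps,\alpha_2]$, observe that the actual sifting variable $\log y/\log z=(\alpha_1-\eps-\tfrac32\theta)r=s-\eps r$ differs from the hypothesis's $s$ by at most $1/10$ so that $s-\eps r>\beta(2)$, feed in $\mf F,\mf f=1+O(e^{-s})$, and for the last assertion combine Mertens with the Euler-product approximation \eqref{approximation to L1chiD} from Appendix \ref{section euler products}. Your extra care in absorbing the $O((\log y)^{-1/6})$ term is something the paper leaves implicit under ``for $x$ sufficiently large''.
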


\begin{proof}
This follows directly from Proposition \ref{lambda chi with sieve weights}, with $s=\log y/\log z$ there taken to be $s-\eps r=(\alpha_1-\eps-\frac{3}{2}\theta)r$. The condition $s\ge5$ implies $s-\eps r\ge\beta$ (and $\beta-1$ for the lower bound), since $\eps<1/10r$.

If Hypothesis $\text{H}(c)$ holds for $L(s,\chi_D)$, we may use the estimate \eqref{approximation to L1chiD}; this together with Mertens' theorem shows that
\begin{align*}
XV(z)\;&=\;e^{-\gamma}\wh{\phi}(0)\frac{\log x}{\log z}\Big\{1+O\Big(\exp\Big(\frac{-c\log z}{3\log D}\Big)+\frac{1}{\log z}\Big)\Big\} \\
&=\;e^{-\gamma}\wh{\phi}(0)r\Big\{1+O(e^{-c/3r\theta})\Big\}.
\end{align*}
\end{proof}

\section{A large sieve-type inequality for $\lambda_\chi$ over almost-primes}
\label{sec:ls inequality}

In this section we give a type of large sieve inequality for the harmonics $\lambda_\chi(n)$ where $n$ runs over integers with no small prime factors. General large sieve inequalities for Hecke characters in number fields were given by Huxley \cite{huxley1968large} and Schaal \cite{schaal1970large}; results of the same analytic strength but with explicit constants were later established by Schumer \cite{schumer1986large}. 

However, these results are not sufficient for our particular application here. The above results (after specializing to our case) imply a bound
\begin{equation}
\label{initial ls bound}
\sum_{\chi\in\wh{\mc{H}}}\Big|\sum_{n\sim N}\frac{c_n}{n}\lambda_\chi(n)\Big|^2\;\ll\; (\log N)^2
\end{equation}
for any complex numbers $c_n$ satisfying $|c_n|\le1$, as long as $D\le N$. In our case, the variable $n$ has no small prime factors, say $(n,P(z))=1$. Rather than applying the bound \eqref{initial ls bound} by restriction of the coefficients $c_n$, we apply a sieve that allows us to gain two factors of $\log z$, which is crucial for our application here. Our precise result is

\begin{proposition}
\label{large sieve type inequality}
Put $z=x^{1/r}$ and $D=x^\theta$, and suppose that Hypothesis $\text{H}(c)$ holds for $L(s,\chi_D)$. Then as long as $(\alpha_1-\frac{3}{2}\theta)r\ge5$ and $r\theta\ll1$, we have
\begin{equation}
\label{large sieve bound}
\sum_{\chi\in\wh{\mc{H}}}\Big|\sum_{\substack{x^{\alpha_1}\le n\le x^{\alpha_2} \\ (n,P(z))=1}}\frac{c_n}{n}\lambda_\chi(n)\Big|^2 \;\ll\;(\alpha_2-\alpha_1)^2r^2,
\end{equation}
where $c_n$ are any complex numbers satisfying $|c_n|\le1$. 
\end{proposition}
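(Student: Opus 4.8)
The plan is to extract \eqref{large sieve bound} from the coarse input \eqref{initial ls bound} by exploiting the arithmetic of the $z$-rough support of the $n$-sum; this is exactly what should recover the two powers of $\log z=(\log x)/r$ separating the two bounds. A preliminary cleanup: every $n$ that is not squarefree carries a repeated prime factor exceeding $z=x^{1/r}$, so by $|\lambda_\chi(n)|\le\tau(n)$, the estimate $h\ll D^{1/2}\log D$, and a trivial bound, the total contribution of such $n$ to the left side of \eqref{large sieve bound} is $\ll x^{\theta/2-2/r+o(1)}$, which is negligible because $r\theta\ll1$; hence I may restrict to squarefree $z$-rough $n$. Next I would peel off the trivial character. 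Since $\lambda_{\chi_0}(n)=(1*\chi_D)(n)\ge0$, its contribution is at most $\big(\sum_n\frac1n(1*\chi_D)(n)\mathbf{1}_{(n,P(z))=1}\mathbf{1}_{x^{\alpha_1}\le n\le x^{\alpha_2}}\big)^2$, which by Corollary \ref{corollary lambda chi with sieve weights} (applied to $\chi_0$ against a smooth majorant of $\mathbf{1}_{[\alpha_1,\alpha_2]}$) is $\ll (XV(z))^2\ll(\alpha_2-\alpha_1)^2r^2$; this is one place where Hypothesis $\text{H}(c)$ enters, through Proposition \ref{prop hyp HC for class group l func} and the ensuing evaluation $V(z)=\prod_{p<z}(1-g(p))\asymp(\log z)^{-1}L(1,\chi_D)^{-1}$. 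It remains to bound $\sum_{\chi\ne\chi_0}|B_\chi|^2$, where $B_\chi=\sum_n\frac{c_n}{n}\lambda_\chi(n)$ is now over squarefree $z$-rough $n$ in the range.

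The engine for the nontrivial characters is a combinatorial (Buchstab-type) decomposition that repeatedly peels prime factors. Writing $n=bp$ with $p$ the smallest prime factor of $n$ and $b=n/p$, so that $b$ is $z$-rough with every prime factor larger than $p$ and $\lambda_\chi(n)=\lambda_\chi(b)\lambda_\chi(p)$ by the Hecke relations \eqref{Hecke relations} (the prime $n$ contributing the boundary term $b=1$), gives
\[
B_\chi \;=\; \sum_{z\le p}\frac{\lambda_\chi(p)}{p}\sum_{\substack{b:\ \text{every prime of }b\,>\,p\\ x^{\alpha_1}\le bp\le x^{\alpha_2}}}\frac{c_{bp}}{b}\,\lambda_\chi(b)\;+\;(\text{prime }n\text{ term}).
\]
The decisive structural fact is that a $z$-rough integer $\le x^{\alpha_2}$ has at most $\alpha_2 r$ prime factors, so this process terminates after $O(r)$ steps. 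Organizing the iteration by the number $k$ of peeled primes produces only $O(r)$ pieces, and combining them by Cauchy--Schwarz in $k$ costs merely a factor $O(r)$ --- crucially \emph{not} the factor $(\log x)^{O(1)}$ that a dyadic decomposition of the whole range would incur. At each peeling step I would apply Cauchy--Schwarz in the remaining rough variable, with weights chosen to flatten the outer factor, and bound the resulting $\chi$-dependent factor (a short sum of $|\lambda_\chi|^2$ over rough integers, or a truncated Euler product for $L_K(s,\chi)$) uniformly in $\chi$; here Proposition \ref{prop hyp HC for class group l func}, i.e.\ Hypothesis $\text{H}(c)$, is used again, to preclude anomalously large values of the class-group $L$-functions near $s=1$. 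The cancellation that survives is carried into the next step.

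After $O(r)$ such steps one is left with genuine prime sums $\sum_{p}\frac{c_p'}{p}\lambda_\chi(p)$ over subintervals of $[z,x^{\alpha_2}]$ with $\chi$-independent coefficients of modulus $\le1$, to which I would apply the input \eqref{initial ls bound} in dyadic form. Every prime here is $\ge z\ge D$ --- because $r\theta\ll1$ --- so the hypothesis $D\le N$ of \eqref{initial ls bound} holds throughout; and a careful accounting of the $1/p$-weights accumulated over the $O(r)$ peeled primes, together with the convergence of sums like $\sum_{p\ge z}1/(p(\log p)^2)$ and the bound $\sum_{z\le p\le x^{\alpha_2}}1/p\ll\log r$, absorbs the $\log x$-powers coming from \eqref{initial ls bound} and leaves the claimed $O((\alpha_2-\alpha_1)^2r^2)$. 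Reinstating the $\chi_0$ contribution then yields \eqref{large sieve bound}. Throughout, the constraint $(\alpha_1-\tfrac32\theta)r\ge5$ guarantees that the sieve parameter $s$ exceeds $\beta(2)$, so that Proposition \ref{fundamental lemma} and Corollary \ref{corollary lambda chi with sieve weights} are applicable, and it forces $\alpha_1>\tfrac32\theta>\theta$, keeping every sub-sum at least as long as $D$.

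The step I expect to be the main obstacle is precisely the bookkeeping in the last two paragraphs: choosing the order in which primes are peeled, the Cauchy--Schwarz weights, and the final dyadic subdivisions so that simultaneously (i) every Cauchy--Schwarz application costs at most $O(r^{O(1)})$ and never a power of $\log x$, (ii) every prime sub-sum passed to \eqref{initial ls bound} is at least as long as $D$, and (iii) the accumulated weights over the peeled primes cancel the $O((\log x)^2)$ of \eqref{initial ls bound} down to $O((\alpha_2-\alpha_1)^2r^2)$. The fact that a $z$-rough integer in a $\log$-bounded interval has only $O(r)$ prime factors is exactly what makes (i) and (iii) compatible --- it replaces a decomposition into $\asymp\log x$ pieces by one into $O(r)$ pieces --- but verifying that (i)--(iii) can all be met at once is the delicate heart of the argument, and is presumably what the hypotheses $(\alpha_1-\tfrac32\theta)r\ge5$ and $r\theta\ll1$ are calibrated to secure.
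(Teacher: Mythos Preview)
Your treatment of the $\chi_0$ contribution via Corollary~\ref{corollary lambda chi with sieve weights} is fine, but your plan for the nontrivial characters has a genuine gap. The iterated Buchstab peeling feeding into \eqref{initial ls bound} cannot close as described: applying \eqref{initial ls bound} to a dyadic prime sum over $p\sim N\le x$ still yields $O((\log N)^2)=O((\log x)^2)$, not $O(r^2)$, and for the boundary piece where $n$ is itself prime there is nothing to peel at all---you are stuck with \eqref{initial ls bound} outright. The hope that accumulated $1/p$-weights ``absorb the $\log x$-powers'' is not substantiated; a single peeled prime contributes a weight of size $\sum_{z\le p\le x}1/p\asymp\log r$, which does not offset $(\log x)^2$. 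Moreover, after one peel the coefficients become $c_{bp}$, which entangles $p$ and $b$ and invalidates the clean factorization you write for $B_\chi$. You yourself identify exactly this bookkeeping as ``the main obstacle,'' and it is.

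The paper's proof avoids all of this via a positivity observation you do not make. Opening the square and summing over $\chi$ first gives
\[
\sum_{\chi\in\wh{\mc H}}\lambda_\chi(n_1)\con{\lambda_\chi}(n_2)\;=\;h\mathop{\sum_{\N\mf a_1=n_1}\sum_{\N\mf a_2=n_2}}_{\mf a_1\sim\mf a_2}1\;\ge\;0,
\]
so after $|c_n|\le1$ the entire left side of \eqref{large sieve bound} is majorized by the same double sum with the sharp conditions $x^{\alpha_1}\le n\le x^{\alpha_2}$ and $(n,P(z))=1$ replaced by a smooth $\phi(\log n/\log x)$ and upper-bound sieve weights $\theta_n^+$. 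This refactors back into $\sum_\chi\big|\sum_n\tfrac{\theta_n^+}{n}\lambda_\chi(n)\phi(\tfrac{\log n}{\log x})\big|^2$, and Corollary~\ref{corollary lambda chi with sieve weights} now evaluates every inner sum: for $\chi\ne\chi_0$ it is $O(h^{-1}(\log x)^{-2})$, so those characters together contribute $O(h^{-1}(\log x)^{-4})$, negligible; the sole surviving term is $\chi_0$, giving $\ll(\wh\phi(0)r)^2\ll(\alpha_2-\alpha_1)^2r^2$. No Buchstab decomposition, no appeal to \eqref{initial ls bound}, no squarefree reduction, and no separate handling of $\chi_0$ versus $\chi\ne\chi_0$ on the left side are needed.
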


\begin{proof}
Let $0<\eps<1/10r$. Let $0\le\phi(u)\le1$ be a smooth function supported on $[\alpha_1-\eps,\alpha_2+\eps]$ such that $\phi(u)=1$ for $\alpha_1\le u\le\alpha_2$, and let $\xi^+=(\xi_d^+)$ be the upper-bound beta-sieve weights of level $y=x^{\alpha_1-\eps}/D^{3/2}$.

Expanding the square and rearranging, the left-hand side of \eqref{large sieve bound} is
\[
\mathop{\sum\sum}_{\substack{x^{\alpha_1}\le n_1,n_2\le x^{\alpha_2} \\ (n_1n_2,P(z))=1}}\frac{c_{n_1}\con{c_{n_2}}}{n_1n_2}\sum_{\chi\in\wh{\mc{H}}}\lambda_\chi(n_1)\con{\lambda_\chi}(n_2).
\]
Using the definition \eqref{definition of lambda chi} of $\lambda_\chi$, the above sum over $\chi$ is equal to
\[
\sum_{\chi\in\wh{\mc{H}}}\lambda_\chi(n_1)\con{\lambda_\chi}(n_2)=\sum_{\N\mf{a}_1=n_1}\sum_{\N\mf{a}_2=n_2}\sum_\chi\chi(\mf{a}_1)\con{\chi}(\mf{a}_2)=h\mathop{\sum_{\N\mf{a}_1=n_1}\sum_{\N\mf{a}_2=n_2}}_{\mf{a}_1\sim\mf{a}_2}1,
\]
where $\mf{a}_1\sim\mf{a}_2$ means $\mf{a}_1$ and $\mf{a}_2$ are in the same ideal class. In particular, the above sum is real and nonnegative. Therefore, taking absolute values, we have
\[
\sum_{\chi\in\wh{\mc{H}}}\Big|\sum_{\substack{x^{\alpha_1}\le n\le x^{\alpha_2} \\ (n,P(z))=1}}\frac{c_n}{n}\lambda_\chi(n)\Big|^2\;\le\;\mathop{\sum\sum}_{\substack{x^{\alpha_1}\le n_1,n_2\le x^{\alpha_2} \\ (n_1n_2,P(z))=1}}\frac{1}{n_1n_2}\sum_{\chi\in\wh{\mc{H}}}\lambda_\chi(n_1)\con{\lambda_\chi}(n_2). 
\]
The right-hand side above is majorized by
\[
\sum_{n_1}\sum_{n_2}\frac{\theta_{n_1}^+\theta_{n_2}^+}{n_1n_2}\phi\Big(\frac{\log n_1}{\log x}\Big)\phi\Big(\frac{\log n_1}{\log x}\Big)\sum_{\chi\in\wh{\mc{H}}}\lambda_\chi(n_1)\con{\lambda_\chi}(n_2),
\]
and so we have shown that
\begin{equation}
\label{ls result of lemma}
\sum_{\chi\in\wh{\mc{H}}}\Big|\sum_{\substack{x^{\alpha_1}\le n\le x^{\alpha_2} \\ (n,P(z))=1}}\frac{c_n}{n}\lambda_\chi(n)\Big|^2 \;\le\; \sum_{\chi\in\wh{\mc{H}}}\Big|\sum_{n\ge1}\frac{\theta_n^+}{n}\lambda_\chi(n)\phi\Big(\frac{\log n}{\log x}\Big)\Big|^2.
\end{equation}
Next we apply Corollary \ref{corollary lambda chi with sieve weights} to evaluate the $n$-sum on the right-hand side of the above, which gives
\[
\sum_{n\ge1}\frac{\theta_n^+}{n}\lambda_\chi(n)\phi\Big(\frac{\log n}{\log x}\Big)\;\le\;e^{-\gamma}\wh{\phi}(0)r\Big\{1+O(e^{-c/3r\theta})\Big\}\cdot\delta(\chi)+O(h^{-1}(\log x)^{-2}),
\]
as long as $(\alpha_1-\frac{3}{2}\theta)r\ge5$. Plugging this into \eqref{ls result of lemma}, we have shown
\[
\sum_{\chi\in\wh{\mc{H}}}\Big|\sum_{\substack{x^{\alpha_1}\le n\le x^{\alpha_2} \\ (n,P(z))=1}}\frac{c_n}{n}\lambda_\chi(n)\Big|^2 \;\le\; e^{-2\gamma}\wh{\phi}(0)^2r^2\Big\{1+O(e^{-c/3r\theta})\Big\}.
\]
We have $\wh{\phi}(0)\ll(\alpha_2-\alpha_1)$, and the $O(e^{-c/3r\theta})$ term is superfluous since we assume $r\theta\ll1$. This completes the proof.
\end{proof}

\section{The exceptional case: proof of Theorem \ref{exceptional theorem}}
\label{sec:exceptional}

\begin{proof}[Proof of Theorem \ref{exceptional theorem}]
We begin by applying the Buchstab formula for the sequence $\mc{A}=(a_n)$ given by \eqref{definition of a_n}. This gives
\begin{equation}
\label{applying buchstab formula}
S(\mc{A},\sqrt{x})\;=\;S(\mc{A},z)\;-\sum_{z\le p<\sqrt{x}}S(\mc{A}_p,p),
\end{equation}
where $z=x^{1/r}$, with $r>0$ to be chosen later. Let $\xi^-=(\xi_d^-)$ be the lower-bound beta-sieve weights of level $y=x^{1-\nu}/D^{3/2}$, and put $\theta^-=1*\xi^-=(\theta_n^-)$. The orthogonality \eqref{orthogonality of characters} of the class group characters $\chi\in\wh{\mc{H}}$ implies
\[
S(\mc{A},z)\;\ge\;\frac{1}{h}\sum_{\chi}\con{\chi}(\mc{C})\sum_{n\ge1}\frac{\theta_n^-}{n}\lambda_\chi(n)f\Big(\frac{\log n}{\log x}\Big).
\]
Now we apply Corollary \ref{corollary lambda chi with sieve weights} to get
\[
S(\mc{A},z)\;\ge\;\frac{1}{h}XV(z)\Big\{1+O(e^{-s})\Big\},
\]
where $X$ and $V(z)$ are given by \eqref{X and V(z)}, as long as
\begin{equation}
\label{s condition exceptional}
s\;=\;(1-\tfrac{3}{2}\theta)r\;\ge\;5.
\end{equation}
Now we consider the second term in \eqref{applying buchstab formula}. For each $p$, we use the upper bound
\[
S(\mc{A}_p,p)\;\le\; S(\mc{A}_p,z).
\]
We have
\begin{equation}
\label{cleaning S(Ap,z)}
S(\mc{A}_p,z)\;=\;\sum_{\substack{(n,P(z))=1 \\ (n,p)=1}}a_{pn}\;+\;O\Big(\frac{(\log x)^2}{pz}\Big),
\end{equation}
the error term appearing from the terms $a_{pn}$ with $p\mid n$. Now we let $\xi^+=(\xi_d^+)$ be the upper-bound beta-sieve weights supported on $d\le x^{1/2-\nu}/D^{3/2}$, putting $\theta^+=1*\xi^+=(\theta^+_n)$. Expanding in terms of the characters $\chi$, we have
\[
S(\mc{A}_p,z)\;\le\;\frac{1}{h}\sum_\chi\con{\chi}(\mc{C})\frac{\lambda_{\chi}(p)}{p}\!\!\sum_{\substack{n\ge1 \\(n,p)=1}}\frac{\theta_n^+}{n}\lambda_\chi(n)f\Big(\frac{\log pn}{\log x}\Big).
\]
We remove the condition $(n,p)=1$ up to the same error term in \eqref{cleaning S(Ap,z)}. Note that $f(\log pn/\log x)$ is supported on $n\ge x^{1/2-\nu}$ for any $p$ since $p<\sqrt{x}$. 
So Corollary \ref{corollary lambda chi with sieve weights} gives
\[
S(\mc{A}_p,z)\;\le\;\frac{\lambda_{\chi_0}(p)}{p}\cdot\frac{1}{h}XV(z)\Big\{1+O(e^{-s+r/2})\Big\}\;+\;O\Big(\frac{(\log x)^2}{pz}\Big),
\]
as long as
\begin{equation}
\label{s and r condition exceptional}
s-r/2 \;=\; (\tfrac{1}{2}-\tfrac{3}{2}\theta)r\;\ge\;5.
\end{equation}
The second error term above is subsumed by the first after summing over $p$, and hence we get
\[
\sum_{z\le p<\sqrt{x}}S(\mc{A}_p,p)\;\le\;\delta(z,\sqrt{x})\cdot \frac{1}{h}XV(z)\Big\{1+O(e^{-s+r/2})\Big\},
\]
where we have put
\[
\delta(z,w)\;=\;\sum_{z\le p<w}\frac{\lambda_{\chi_0}(p)}{p}.
\]
One can show (see for instance \S24.2 of \cite{friedlander2010opera}) that
\[
\delta(z,w)\;\le\;2(1-\beta)[\log w+O(z^{-1/4})]\qquad\text{if }w > z\ge D^2,
\]
where $\beta$ is any real zero of $L(s,\chi_D)$. Assuming that $L(s,\chi_D)$ has a real zero $\beta>1-c/\log D$, we get
\[
\delta(z,\sqrt{x})\;\le\;c\theta^{-1}\;+\;O(z^{-1/4}).
\]
Putting everything together, we have shown that
\[
S(\mc{A},\sqrt{x})\;\ge\;\frac{1}{h}XV(z)\Big\{(1-c\theta^{-1})+O(e^{-(1/2-3\theta/2)r})\Big\}.
\]
We choose values for the parameters, each in terms of $r$. We take
\[
\theta\;=\;1/r^2\quad\text{and}\quad c\;=\;1/r^3.
\]
With these choices, we see that both \eqref{s condition exceptional} and \eqref{s and r condition exceptional} hold for $r$ sufficiently large, and that $S(\mc{A},\sqrt{x})\gg h^{-1}XV(z)$. Finally, by \eqref{bound on 1-g} we have
\[
V(z)\;\ge\;\prod_{p<x}\Big(1-\frac{1}{p}\Big)^2\;\gg\;(\log x)^{-2},
\]
and hence
\[
\frac{1}{h}XV(z) \;\gg\; \frac{\wh{f}(0)}{h}\frac{L(1,\chi_D)}{\log x},
\]
which completes the proof.
\end{proof}

\section{The non-exceptional case: proof of Theorem \ref{nonexceptional theorem}}

In this section we prove Theorem \ref{nonexceptional theorem} under the assumption of the three propositions below, whose proofs we give in the final section. Throughout this entire section and the following two, we assume that Hypothesis $\text{H}(c)$ holds for $L(s,\chi_D)$. 

We begin by applying the Buchstab formula
\[
S(\mc{A},\sqrt{x}) \;= S(\mc{A},z)\;-\;\sum_{z\le p<\sqrt{x}}S(\mc{A}_{p},p), 
\]
where $z=x^{1/r}$, with $r$ to be chosen later. (Note that this $r>0$ is independent from the one in the previous section, and may be chosen to have a different numerical value.) We then apply a second iteration of the Buchstab formula to each term $S(\mc{A}_{p},p)$, which gives
\begin{equation}
\label{sieve type identity}
S(\mc{A},\sqrt{x}) \;=\; S_1(\mc{A})\;+\;S_2(\mc{A})\;+\;S_3(\mc{A}),
\end{equation}
where we have put
\[
S_1(\mc{A}) \;=\; S(\mc{A},z),\qquad S_2(\mc{A}) \;=\; -\sum_{z\le p<\sqrt{x}}S(\mc{A}_{p},z),
\]
and
\[
S_3(\mc{A}) \;= \mathop{\sum\sum}_{z\le p_2<p_1<\sqrt{x}}S(\mc{A}_{p_1p_2},p_2).
\]
We evaluate each term $S_i(\mc{A})$ in the following three propositions.

\begin{proposition}
\label{proposition evaluation of S1}
With $z=x^{1/r}$ and $D=x^\theta$, suppose that
\begin{equation}
\label{sifting variable condition for S1}
s\;\coloneqq\;(1-\tfrac{3}{2}\theta)r\;\ge\;5.
\end{equation}
Then we have
\begin{equation}
\label{evaluation of S1}
S_1(\mc{A})\;=\;\frac{e^{-\gamma}\wh{f}(0)r}{h}\Big\{1+O(e^{-c/3r\theta}+e^{-s})\Big\},
\end{equation}
where the implied constant is absolute.
\end{proposition}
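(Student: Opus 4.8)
The plan is to recognize $S_1(\mc{A}) = S(\mc{A},z)$ as precisely the sifted sum studied in the previous section, specialized to the test function $\phi = f$ and to the lower/upper beta-sieve weights of the appropriate level, and then to invoke Corollary \ref{corollary lambda chi with sieve weights} to get a two-sided estimate. First I would write $S(\mc{A},z)$ sandwiched between $S^-(\mc{A},z)$ and $S^+(\mc{A},z)$, where $S^\pm(\mc{A},z) = \sum_n a_n \theta_n^\pm$ with $\theta^\pm = 1*\xi^\pm$ the beta-sieve weights of level $y = x^{1-\nu}/D^{3/2}$ (this is the level of distribution of $\mc{A}$ recorded after Proposition \ref{corollary summation formula over progressions}, so the remainder terms are controlled). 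Then I would insert the orthogonality relation \eqref{orthogonality of characters}, writing
\[
S^\pm(\mc{A},z) \;=\; \frac{1}{h}\sum_{\chi\in\wh{\mc{H}}}\con{\chi}(\mc{C})\sum_{n\ge1}\frac{\theta_n^\pm}{n}\lambda_\chi(n)f\Big(\frac{\log n}{\log x}\Big).
\]

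Next I would apply Corollary \ref{corollary lambda chi with sieve weights} with $\phi = f$ (recalling $f$ is supported in $[1-\nu,1]$, so $\alpha_1 = 1-\nu$, and the hypothesis $s = (1-\tfrac32\theta)r \ge 5$ is exactly \eqref{sifting variable condition for S1}, modulo the harmless $\eps$-widening of the support which costs only $\eps r < 1/10$ in the sifting variable). For $\chi \ne \chi_0$ the inner sum is $O(h^{-1}(\log x)^{-2})$, and there are $h$ characters, so the total contribution of the nontrivial characters is $O(h^{-1}(\log x)^{-2})$, which is absorbed into the error term since $e^{-c/3r\theta}$ and $e^{-s}$ are both $\gg (\log x)^{-2}$ for the relevant parameter ranges — actually one should check this: with $\theta, c$ eventually chosen as powers of $1/r$ and $r$ fixed, $e^{-c/3r\theta}$ is bounded below by an absolute constant, so this is fine. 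For $\chi = \chi_0$, the corollary (invoking Hypothesis $\text{H}(c)$, which is in force throughout this section) gives
\[
\sum_{n\ge1}\frac{\theta_n^\pm}{n}\lambda_{\chi_0}(n)f\Big(\frac{\log n}{\log x}\Big) \;=\; e^{-\gamma}\wh{f}(0)r\big\{1+O(e^{-c/3r\theta}+e^{-s})\big\},
\]
where the $O(e^{-s})$ accounts for the gap between $\mf{F}(s),\mf{f}(s)$ and $1$, and the $(\log y)^{-1/6}$ error is likewise subsumed. Since $\con{\chi_0}(\mc{C}) = 1$, the $\chi_0$ term contributes $\tfrac1h e^{-\gamma}\wh{f}(0)r\{1+O(\cdots)\}$.

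Combining, both $S^-(\mc{A},z)$ and $S^+(\mc{A},z)$ equal $\tfrac{e^{-\gamma}\wh{f}(0)r}{h}\{1+O(e^{-c/3r\theta}+e^{-s})\}$, and since $S^- \le S(\mc{A},z) \le S^+$, the same holds for $S_1(\mc{A}) = S(\mc{A},z)$, which is \eqref{evaluation of S1}. The one genuinely nontrivial point — and the only place where anything beyond bookkeeping happens — is the passage from the sieve output $XV(z)\{\mf{f}(s) + O(\cdots)\}$ to the clean main term $e^{-\gamma}\wh{f}(0)r$; but that is exactly the content of the last display in Corollary \ref{corollary lambda chi with sieve weights}, which rests on the asymptotics $\mf{F}(s),\mf{f}(s) = 1 + O(e^{-s})$ from Proposition \ref{fundamental lemma}, on Mertens' theorem, and on the estimate for $L(1,\chi_D)$ via the Euler product under Hypothesis $\text{H}(c)$. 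So I expect no real obstacle here: the proposition is essentially a direct corollary of the machinery already assembled, and the main thing to verify carefully is that all the various error terms ($(\log x)^{-2}$ from nontrivial $\chi$, $(\log y)^{-1/6}$ from the fundamental lemma, $\eps r$ from widening the support) are dominated by $e^{-c/3r\theta} + e^{-s}$ in the parameter regime eventually used.
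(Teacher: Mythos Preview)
Your proposal is correct and follows essentially the same approach as the paper: sandwich $S_1(\mc{A})=S(\mc{A},z)$ between the sifted sums with beta-sieve weights of level $x^{1-\nu}/D^{3/2}$, expand $\lambda_{\mc{C}}$ via the orthogonality relation \eqref{orthogonality of characters}, and then apply Corollary \ref{corollary lambda chi with sieve weights} to each $\chi$-sum. Your additional remarks about absorbing the secondary error terms are accurate bookkeeping that the paper leaves implicit.
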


\begin{proposition}
\label{proposition evaluation of S2}
With $z=x^{1/r}$ and $D=x^\theta$, suppose that
\begin{equation}
\label{sifting variable condition for S2}
s-r/2\;=\;(\tfrac{1}{2}-\tfrac{3}{2}\theta)r\;\ge\;5.
\end{equation}
Then we have
\begin{equation}
\label{evaluation of S2}
S_2(\mc{A})\;=\;-\frac{e^{-\gamma}\wh{f}(0)r\log(r/2)}{h}\Big\{1+O(e^{-c/3r\theta}+e^{-s+r/2})\Big\},
\end{equation}
where the implied constant is absolute.
\end{proposition}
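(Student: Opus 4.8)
The plan is to evaluate $S(\mc{A}_p, z)$ for each prime $p$ with $z \le p < \sqrt{x}$ via the Fundamental Lemma and then sum over $p$. The crucial observation is that the congruence sums of the subsequence $\mc{A}_p = (a_{pm})_m$ are literally congruence sums of $\mc{A}$ itself: for every $d$,
\[
(A_p)_d \;=\; \sum_{m \equiv 0 \smod{d}} a_{pm} \;=\; \sum_{k \ge 1} a_{pdk} \;=\; \sum_{n \equiv 0 \smod{pd}} a_n \;=\; A_{pd}.
\]
Since $p \ge z$ is coprime to every $d \mid P(z)$ and the density $g$ of \eqref{density function} is multiplicative, Proposition \ref{corollary summation formula over progressions} applied with the test function $f$ (so $\alpha_1 = 1-\nu$), together with the orthogonality \eqref{orthogonality of characters}, gives
\[
(A_p)_d \;=\; g(d)\cdot\frac{g(p)X}{h} \;+\; r_{pd},\qquad X = L(1,\chi_D)\wh{f}(0)\log x,\qquad r_{pd} \ll \frac{\tau(pd)^2}{pd}D^{-1/2}(\log x)^{-A},
\]
valid for $pd \le x^{1-\nu}/D^{3/2}(\log x)^{A+2}$. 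Thus $\mc{A}_p$ is a sieve sequence with the \emph{same} density $g$ — hence, by \eqref{bound on 1-g}, of dimension $\kappa = 2$ — whose ``$X$'' is $g(p)X/h$.

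Next I would fix the beta-sieve weights $\xi^\pm$ of level $y = x^{1/2-\nu}/D^{3/2}(\log x)^{A+2}$ (so $pd \le x^{1-\nu}/D^{3/2}(\log x)^{A+2}$ for all $p < \sqrt{x}$, $d \le y$) and set $s' = \log y/\log z = (s - r/2) - \nu r + O\big(r\log\log x/\log x\big)$, where $s = (1-\tfrac32\theta)r$ is the quantity in \eqref{sifting variable condition for S2}. By that hypothesis, using the slack $5 > \beta(2) = 4.833\dots$ and taking $\nu$ small in terms of $r$, one has $s' \ge \beta(2)$, so Proposition \ref{fundamental lemma} (with $\kappa = 2$, hence $\mf{F}(s'),\mf{f}(s') = 1 + O(e^{-s'})$) applied to $\mc{A}_p$, after sandwiching $S^-(\mc{A}_p,z) \le S(\mc{A}_p,z) \le S^+(\mc{A}_p,z)$ and bounding the remainder absolutely by $\sum_{d \le y,\, d \mid P(z)} |r_{pd}| \ll \tau(p)^2 p^{-1} D^{-1/2}(\log x)^{4-A}$, yields
\[
S(\mc{A}_p, z) \;=\; \frac{g(p)}{h}\,XV(z)\bigl\{1 + O(e^{-s'})\bigr\} \;+\; O\Big(\frac{\tau(p)^2}{p}D^{-1/2}(\log x)^{4-A}\Big).
\]
Summing over $z \le p < \sqrt{x}$ and choosing $A$ large, the error terms total $\ll D^{-1/2}(\log x)^{5-A}$, which (using $h \ll D^{1/2}\log D$) is negligible against the eventual main term, so
\[
S_2(\mc{A}) \;=\; -\,\frac{XV(z)}{h}\bigl\{1 + O(e^{-s'})\bigr\}\sum_{z \le p < \sqrt{x}} g(p) \;+\; (\text{negligible}).
\]

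It remains to evaluate $\sum_{z \le p < \sqrt{x}} g(p)$. Writing $g(p) = p^{-1}(1 + \chi_D(p)) + O(p^{-2})$, Mertens' theorem gives $\sum_{z \le p < \sqrt{x}} p^{-1} = \log(r/2) + O(r/\log x)$, and — this is where Hypothesis $\text{H}(c)$ re-enters, exactly as for $S_1$ — the classical zero-free region for $L(s,\chi_D)$, via partial summation from the prime number theorem for $\chi_D$, gives $\sum_{z \le p < \sqrt{x}} \chi_D(p)/p \ll e^{-c'\log z/\log D} = e^{-c'/r\theta}$; since $\log(r/2)$ is bounded below ($r$ being large), this yields $\sum_{z \le p < \sqrt{x}} g(p) = \log(r/2)\{1 + O(e^{-c/3r\theta})\}$. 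Combining this with $e^{-s'} \ll e^{-s+r/2}$ (valid once $\nu$ is small in terms of $r$) and substituting $XV(z) = e^{-\gamma}\wh{f}(0)\,r\{1 + O(e^{-c/3r\theta})\}$ from Corollary \ref{corollary lambda chi with sieve weights} (legitimate under Hypothesis $\text{H}(c)$) gives \eqref{evaluation of S2}.

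Most of this is routine bookkeeping; the subtle point is matching the error term. One must notice that the effective sieve level for $\mc{A}_p$ reaches only $x^{1/2-\nu}/D^{3/2}(\log x)^{A+2}$ — because when $p$ is nearly $\sqrt{x}$ the approximation for $A_{pd}$ forces $d$ to be small — so that $s'$ is only about $(\tfrac12 - \tfrac32\theta)r$, which is precisely why hypothesis \eqref{sifting variable condition for S2} (with its slack above $\beta(2)$) is the natural one, and why $\nu$ (a parameter we are free to choose small once $r$ is fixed) must be taken $\ll 1/r$; implicitly one also wants $r\theta \ll 1$ so that $e^{-c/3r\theta}$ is genuinely small. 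The only real analytic input is the estimate $\sum_{z\le p<\sqrt{x}}\chi_D(p)/p \ll e^{-c'/r\theta}$, and this is the same zero-free-region bound already used for $S_1$ and underlying \eqref{approximation to L1chiD} in the appendix.
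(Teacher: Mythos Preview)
Your proof is correct and follows essentially the same route as the paper: estimate each $S(\mc{A}_p,z)$ via the Fundamental Lemma with sieve level $\approx x^{1/2-\nu}/D^{3/2}$ (giving sifting variable $\approx s-r/2$), then sum over $p$ using Mertens together with the zero-free-region bound $\sum_{z\le p<\sqrt{x}}\chi_D(p)/p\ll e^{-c/3r\theta}$, and finally substitute $XV(z)=e^{-\gamma}\wh{f}(0)r\{1+O(e^{-c/3r\theta})\}$. The only organizational difference is that you apply the Fundamental Lemma directly to $\mc{A}_p$ via the identity $(A_p)_d=A_{pd}$ (obtaining the factor $g(p)$), whereas the paper first opens $\lambda_{\mc{C}}(pn)$ with class group characters, removes the $p\mid n$ terms, and applies Corollary~\ref{corollary lambda chi with sieve weights} to the $n$-sum (obtaining the factor $\lambda_{\chi_0}(p)/p$); since $g(p)=\lambda_{\chi_0}(p)/p+O(p^{-2})$ these are equivalent.
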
 

\begin{proposition}
\label{final result for S3}
Take $z=x^{1/r}$ and $D=x^\theta$; suppose $r\ge10$ and $\nu\le1/20$. Let $k\ge1$ be such that
\begin{equation}
\label{sifting variable condition for S3}
(\tfrac{1}{2r}-\tfrac{3}{2}\theta)kr\;\ge\;5\qquad\text{and}\qquad kr\theta\;\ll\;1.
\end{equation}
Then we have
\begin{equation}
\label{evaluation of S3}
S_3(\mc{A})\;=\;W\;+\;O\Big(\frac{1}{h}\Big\{r^2\nu^2+kr(\log r)^2e^{-c/18r\theta}+k^2r^6\nu^{-7/2}e^{-5c/18r\theta}\Big\}\Big),
\end{equation}
where $W$ is an explicit quantity defined in Section \ref{Evaluating S3}, and the implied constant is absolute. Importantly, $W$ does not depend on the ideal class $\mc{C}$ in the definition of the sequence $\mc{A}=(a_n)$. 
\end{proposition}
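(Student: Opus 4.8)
The plan is to take the sum $S_3(\mc{A}) = \sum\sum_{z \le p_2 < p_1 < \sqrt{x}} S(\mc{A}_{p_1p_2}, p_2)$ and, following the strategy outlined in Section \ref{sec:outline}, open it up with class group characters to decouple the three multiplicative variables $p_1$, $p_2$, and the sifted variable $b$ (which is free of prime factors below $p_2$). Concretely, I would first write $S(\mc{A}_{p_1p_2}, p_2) = \sum_{(b, P(p_2))=1} a_{p_1 p_2 b}$, then apply the beta-sieve (upper and lower bounds) in the variable $b$ with weights of an appropriate level --- the level is constrained by the congruence-sum estimate of Proposition \ref{corollary summation formula over progressions}, which forces $b$ to range up to roughly $x^{\alpha_1}/D^{3/2}$, hence the condition $(\tfrac{1}{2r} - \tfrac{3}{2}\theta)kr \ge 5$ with $k$ measuring how much sieving room we have. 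Using the orthogonality relation \eqref{orthogonality of characters} to write $\lambda_{\mc{C}}(n) = h^{-1}\sum_\chi \con{\chi}(\mc{C})\lambda_\chi(n)$ and the Hecke relations \eqref{Hecke relations} to split $\lambda_\chi$ multiplicatively across $p_1$, $p_2$, $b$, I obtain a main term indexed by $\chi = \chi_0$ plus an off-diagonal contribution indexed by $\chi \ne \chi_0$.

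The $\chi = \chi_0$ contribution is what becomes the quantity $W$: it is a triple sum over $p_1, p_2, b$ of the ``expected density'' terms, with $b$ handled by the Fundamental Lemma (Proposition \ref{fundamental lemma}), $p_1$ handled by the prime number theorem / Mertens, and $p_2$ summed over its dyadic-type range; crucially, since $\chi_0$ is the trivial character, this term is manifestly independent of the ideal class $\mc{C}$, which is the assertion at the end of the proposition. The error terms $r^2\nu^2$ and the $\beta$-sieve tails $e^{-s}$-type quantities come from approximating these sums and from the $\nu$-width of the support of $f$. The harder part is the $\chi \ne \chi_0$ contribution. Here I would use two of the three variables --- namely $p_1$ and $b$ --- together with the large sieve inequality of Proposition \ref{large sieve type inequality} (applied to the $(p_1 b)$-aggregate, exploiting that $p_1 b$ is free of small prime factors) to absorb the factor of $h$ gained from opening the characters; this gives a bound of the shape (length of $p_1b$-range)$^2 \cdot r^2$ times $\sum_\chi$ of a character sum over the remaining variable $p_2$. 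For that last sum over $p_2$ I would invoke the explicit formula in logarithmic scale, Proposition \ref{proposition explicit formula in log scale}, to convert $\sum_{p_2} \lambda_\chi(p_2) p_2^{-1}\phi(\log p_2/\log x)$ into a sum over zeros $\rho$ of $L_K(s,\chi)$; since $p_2$ ranges over a relatively long interval (width comparable to a fixed fraction of $\log x$, not merely $\nu\log x$), the Mellin transform $\wt{\Phi}(\rho)$ decays rapidly away from $\Re\rho$ near $1$, so only zeros essentially within the classical zero-free region contribute. Under Hypothesis $\text{H}(c)$ for $L(s,\chi_D)$ --- hence, by Proposition \ref{prop hyp HC for class group l func}, for every $L_K(s,\chi)$ --- every such $\rho$ satisfies $\Re\rho \le 1 - c/\log D$, which yields a saving of size $e^{-c/(\text{const}\cdot r\theta)}$ per character; squaring and summing over the $h$ characters, combined with a standard zero-counting bound $\sum_\rho |\wt{\Phi}(\rho)| \ll$ (polynomial in $\log$), produces the claimed error terms $kr(\log r)^2 e^{-c/18r\theta}$ and $k^2 r^6 \nu^{-7/2} e^{-5c/18r\theta}$, the $\nu^{-7/2}$ arising from the Mellin transform of the narrow test function in the $p_1$, $b$ variables.

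The main obstacle I anticipate is bookkeeping the interaction between the three separate sieve/summation devices: the level-of-distribution constraint on $b$ (forcing the parameter $k$ and the hypotheses in \eqref{sifting variable condition for S3}), the range constraints needed for the large sieve inequality of Proposition \ref{large sieve type inequality} (which itself requires $(\alpha_1 - \tfrac{3}{2}\theta)r \ge 5$ and $r\theta \ll 1$ on the relevant sub-ranges), and the decay in the explicit formula (requiring $\theta < 2\alpha_1 - \alpha_2$ in Proposition \ref{proposition explicit formula in log scale}) must all be simultaneously satisfiable with room to spare, and the various error exponents $c/3$, $c/18$, $5c/18$ must be tracked carefully through the Cauchy--Schwarz step that splits the ``recover the $h$'' part from the ``gain cancellation'' part. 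A secondary technical point is that the variables $p_1$ and $p_2$ are ordered ($p_2 < p_1$) and live on scales that vary continuously, so before applying the large sieve and the explicit formula I would need to dissect the $(p_1, p_2)$ ranges into $O((\log x)^{O(1)})$ boxes on which all quantities are essentially constant, and verify that the resulting loss is absorbed into the stated error. Once these ranges are pinned down, the individual estimates are routine applications of the tools already developed in the preceding sections.
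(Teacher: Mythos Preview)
Your high-level plan matches the paper's: open with class group characters, take the $\chi_0$ piece as $W$, and for $\chi\ne\chi_0$ spend $p_1,b$ on the large sieve and $p_2$ on the explicit formula. But your attribution of all three error terms is off, and this reflects a genuine gap in the decoupling step.

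The term $r^2\nu^2/h$ is not a generic consequence of the $\nu$-width of $f$; it comes from first stripping off the sub-sum $V'$ with $b=1$, i.e.\ $a_{p_1p_2}$ with both $p_i\in[x^{1/2-\nu},x^{1/2}]$, and that sub-sum is itself bounded using the large sieve inequality of Proposition~\ref{large sieve type inequality}. This must be done before characters are introduced, so that the surviving $b$ satisfies $b\ge p_2$ and is a genuine almost-prime variable.

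More importantly, the $kr(\log r)^2 e^{-c/18r\theta}$ term does not come from $\beta$-sieve tails, and the paper never applies a sieve directly to $b$ inside $S(\mc{A}_{p_1p_2},p_2)$. The three conditions $p_1>p_2$, $b\ge p_2$, $(b,P(p_2))=1$ entangle $p_1,b$ with $p_2$; the paper breaks this by making a smooth partition of the $p_2$-range into $J\asymp(\log r)e^{c/18r\theta}$ pieces $[z_{j-1},z_j]$ and replacing the $p_2$-dependent conditions by fixed ones at $z_j$ (lower bound) or $z_{j-1}$ (upper bound). This produces two sandwich quantities $W^-\le V\le W^+$, and $W$ is \emph{defined} as $\tfrac12(W^+(\chi_0)+W^-(\chi_0))$. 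The second error term is exactly the bound on $W^+(\chi_0)-W^-(\chi_0)$, and its size is governed by the fineness $\log\alpha$ of the partition balanced against Mertens-type errors; the parameter $k$ enters because the $b$-sums in these differences are bounded via Corollary~\ref{corollary lambda chi with sieve weights} at the coarser level $z^{1/k}$. What you describe as ``a secondary technical point'' (dissecting ranges into boxes) is in fact the mechanism that defines $W$ and produces this error.

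Finally, the $\nu^{-7/2}$ does not arise from test functions in $p_1$ or $b$. In the $\chi\ne\chi_0$ analysis the explicit formula is applied to the $p_2$-sum with test function $h_j(u)f(u+\log(p_1b)/\log x)$; to separate $p_1,b$ from the resulting sum over zeros the paper Fourier-inverts $f$, and $\nu^{-7/2}$ is the cost $\int|\wh f(w)|(1+|w|^3)\,dw$ of that inversion. After this the large sieve is applied to $p_1$ and $b$ \emph{separately} via Cauchy--Schwarz (not to the product $p_1b$), and the exponents $1/18$ and $5/18$ in the final two errors emerge from balancing $(\log\alpha)^{-1}$ factors (from $h_j$-derivatives and from summing over $j\le J$) against the zero-free-region saving $e^{-c/2r\theta}$.
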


Assuming these propositions for now, we prove Theorem \ref{nonexceptional theorem}.

\begin{proof}[Proof of Theorem \ref{nonexceptional theorem}]
The sequence $\mc{A}=(a_n)$ defined by
\[
a_n\;=\;\frac{1}{n}\lambda_\mc{C}(n)f\Big(\frac{\log n}{\log x}\Big)
\]
depends on the ideal class $\mc{C}$, so for the moment we write $a_n=a_n(\mc{C})$ to emphasize this dependence. We now define the sequence $\mc{B}=(b_n)$ by
\[
b_n\;\coloneqq\;\frac{1}{h}\sum_{\mc{C}\in\mc{H}}a_n(\mc{C})\;=\;\frac{1}{h}\cdot\frac{1}{n}\lambda_{\chi_0}(n)f\Big(\frac{\log n}{\log x}\Big),
\]
the second inequality following from the orthogonality \eqref{orthogonality of characters} of the class group characters. Applying \eqref{sieve type identity} to $\mc{A}$ and $\mc{B}$ and taking the difference, we get
\[
S(\mc{A},\sqrt{x})\;=\;S(\mc{B},\sqrt{x})\;+\;\sum_{1\le i\le3}\Big(S_i(\mc{A})-S_i(\mc{B})\Big).
\]
For each $i$ we have $S_i(\mc{B})=h^{-1}\sum_{\mc{C}\in\mc{H}}S_i(\mc{A})$. Since each of the right-hand sides of \eqref{evaluation of S1}, \eqref{evaluation of S2}, and \eqref{evaluation of S3} does not depend on the class $\mc{C}$, it follows that each of those equations holds with $S_i(\mc{A})$ replaced by $S_i(\mc{B})$. Therefore we get
\begin{align*}
S_1(\mc{A})-S_1(\mc{B})\;&\ll\;\frac{r\nu}{h}\Big\{e^{-c/3r\theta}+e^{-s}\Big\}, \\
S_2(\mc{A})-S_2(\mc{B})\;&\ll\;\frac{(\log r)r\nu}{h}\Big\{e^{-c/3r\theta}+e^{-s+r/2}\Big\},\qquad\text{and} \\
S_3(\mc{A})-S_3(\mc{B})\;&\ll\;\frac{1}{h}\Big\{r^2\nu^2+kr(\log r)^2e^{-c/18r\theta}+k^2r^6\nu^{-7/2}e^{-5c/18r\theta}\Big\}.
\end{align*}
Now we choose our parameters, each in terms of $r$. (NB: the $r$ and $\theta$ here are chosen independently from those in Section \ref{sec:exceptional}, and $c$ here is fixed.) We take
\begin{equation}
\label{parameter choices}
k\;=\;r^{1/2},\quad \theta\;=\;c/r^2,\quad\text{and}\quad \nu\;=\;e^{-r/20}.
\end{equation}
With these choices, one verifies that the conditions \eqref{sifting variable condition for S1}, \eqref{sifting variable condition for S2}, and \eqref{sifting variable condition for S3} are verified for $r$ sufficiently large, and that we have
\[
\sum_{1\le i\le3}\Big(S_i(\mc{A})-S_i(\mc{B})\Big)\;\ll\;\frac{1}{h}r^2e^{-r/18}.
\]
On the other hand, from \eqref{trivial character formula} we have
\[
S(\mc{B},\sqrt{x})\;=\;\frac{1}{h}\sum_{\sqrt{x}\le p<x}\frac{1+\chi_D(p)}{p}f\Big(\frac{\log p}{\log x}\Big).
\]
From the prime number theorem we have
\[
\sum_{p}\frac{1}{p}f\Big(\frac{\log p}{\log x}\Big)\;=\;\wt{f}(0)+O\Big(\frac{1}{\log x}\Big)\;\gg\;\nu,
\]
and from \eqref{smooth summation for chiD} (which assumes Hypothesis $\text{H}(c)$ holds for $L(s,\chi_D)$) we have
\[
\sum_p\frac{\chi_D(p)}{p}f\Big(\frac{\log p}{\log x}\Big)\;\ll\;\nu^{-2}e^{-(1-\nu)c/\theta}.
\]
Therefore for our choices of parameters \eqref{parameter choices}, we get
\[
S(\mc{B},\sqrt{x})\;\gg\; \frac{\nu}{h}\;=\;\frac{1}{h}e^{-r/20}.
\]
From \eqref{sieve type identity} this implies $S(\mc{A},\sqrt{x})\gg \nu/h$ for $r$ sufficiently large, which completes the proof.
\end{proof}

\section{Proofs of Propositions \ref{proposition evaluation of S1} and \ref{proposition evaluation of S2}}

\subsection{Evaluating $S_1(\mc{A})$}

\begin{proof}[Proof of Proposition \ref{proposition evaluation of S1}]
Using the nonnegativity of the terms $a_n$, we have
\[
\sum_{n\ge1}\frac{\theta^-}{n}\lambda_\mc{C}(n)f\Big(\frac{\log n}{\log x}\Big)\;\le\;S_1(\mc{A})\;\le\;\sum_{n\ge1}\frac{\theta^+}{n}\lambda_\mc{C}(n)f\Big(\frac{\log n}{\log x}\Big),
\]
where $\theta^\pm$ are the beta-sieve weights of level $x^{1-\nu}/D^{3/2}$. We expand $\lambda_\mc{C}(n)$ using the orthogonality \eqref{orthogonality of characters} of the class group characters,
\[
\sum_{n\ge1}\frac{\theta^\pm}{n}\lambda_\mc{C}(n)f\Big(\frac{\log n}{\log x}\Big)\;=\;\frac{1}{h}\sum_\chi\con{\chi}(\mc{C})\sum_{n\ge1}\frac{\theta^\pm}{n}\lambda_\chi(n)f\Big(\frac{\log n}{\log x}\Big),
\]
then we apply Corollary \ref{corollary lambda chi with sieve weights} to each $n$-sum on the right-hand side. We get
\begin{align*}
\sum_{n\ge1}\frac{\theta^-}{n}\lambda_\mc{C}(n)f\Big(\frac{\log n}{\log x}\Big)\;&\ge\;\frac{e^{-\gamma}\wh{f}(0)r}{h}\Big\{1+O(e^{-c/3r\theta}+e^{-s})\Big\},\\
\sum_{n\ge1}\frac{\theta^+}{n}\lambda_\mc{C}(n)f\Big(\frac{\log n}{\log x}\Big)\;&\le\;\frac{e^{-\gamma}\wh{f}(0)r}{h}\Big\{1+O(e^{-c/3r\theta}+e^{-s})\Big\},
\end{align*}
where $s=(1-\tfrac{3}{2}\theta)r\ge5$. This gives the result.
\end{proof}

\subsection{Evaluating $S_2(\mc{A})$}

\begin{proof}[Proof of Proposition \ref{proposition evaluation of S2}]
We evaluate the negative of $S_2(\mc{A})$,
\[
-S_2(\mc{A})\;=\sum_{z\le p<\sqrt{x}}S(\mc{A}_p,z).
\]
To evaluate the terms
\[
S(\mc{A}_p,z)\;=\sum_{(n,P(z))=1}\frac{1}{pn}\lambda_\mc{C}(pn)f\Big(\frac{\log pn}{\log x}\Big),
\]
we first attach sieve weights, putting
\[
S^\pm(\mc{A}_p,z)\;=\;\sum_{n\ge1}\frac{\theta_n^\pm}{pn}\lambda_\mc{C}(pn)f\Big(\frac{\log pn}{\log x}\Big),
\]
where $\theta^\pm=1*\xi^\pm$, and $\xi^\pm=(\xi_d^\pm)$ are upper- and lower-bound beta-sieve weights of level $y=x^{1/2-\nu}/D^{3/2}$. Next, using \eqref{orthogonality of characters} (note that $(p,P(z))=1$, since $p\ge z$), we have
\[
S^\pm(\mc{A}_p,z)\;=\;\frac{1}{h}\sum_\chi\con{\chi}(\mc{C})S^\pm(\mc{A}_p,z;\chi),
\]
where we have put
\[
S^\pm(\mc{A}_p,z;\chi)\;=\;\sum_{n\ge1}\frac{\theta_n^\pm}{pn}\lambda_\chi(pn)f\Big(\frac{\log pn}{\log x}\Big).
\]
To use the multiplicativity of $\lambda_\chi$, we first remove the terms where $p\mid n$. Such terms contribute to the above sum at most
\[
\sum_{\substack{n\ge1 \\ p\mid n}}\frac{\tau(n)}{pn}\tau(pn)f\Big(\frac{\log pn}{\log x}\Big)\;\le\;\sum_{m\ge1}\frac{\tau(p)^3}{p^2}\frac{\tau(m)^2}{m}f\Big(\frac{\log p^2m}{\log x}\Big)\;\ll\;\frac{1}{p^2}(\log x)^4,
\]
and hence we get
\[
S^\pm(\mc{A}_p,z;\chi)\;=\;\frac{\lambda_\chi(p)}{p}\sum_{n\ge1}\frac{\theta_n^\pm}{n}\lambda_\chi(n)f\Big(\frac{\log pn}{\log x}\Big)+O\Big(\frac{1}{p^2}(\log x)^4\Big).
\]
Now for the $n$-sum, we apply Corollary \ref{corollary lambda chi with sieve weights} with $\phi(u)=f(u+u_0)$, where $u_0=\log p/\log x$. Note that $\phi$ is supported on $1/2-\nu\le u\le 1/2$, which accounts for the condition \eqref{sifting variable condition for S2}. Following the same lines as in the proof of Proposition \ref{proposition evaluation of S1}, we get
\[
S(\mc{A}_p,z)\;=\;\frac{\lambda_{\chi_0}(p)}{p}\cdot \frac{e^{-\gamma}\wh{f}(0)r}{h}\Big\{1+O(e^{-c/3r\theta}+e^{-s+r/2})\Big\}+O\Big(\frac{1}{p^2}(\log x)^4\Big).
\]
Next, we sum over $p$. The contribution of the second $O$-term above is at most
\[
\frac{1}{z}(\log x)^4\sum_{z\le p<\sqrt{x}}\frac{1}{p}\;\ll\; z^{-1}(\log x)^5,
\]
which is negligible. For the main term, we evaluate
\[
\sum_{z\le p<\sqrt{x}}\frac{\lambda_{\chi_0}(p)}{p}\;=\sum_{z\le p<\sqrt{x}}\frac{1+\chi_D(p)}{p}\;=\;\log(r/2)+O(e^{-c/3r\theta}),
\]
which follows from Mertens' theorem and Corollary \ref{corollary summation for chiD} (using that Hypothesis $\text{H}(c)$ holds for $L(s,\chi_D)$). Putting everything together completes the proof.
\end{proof}

\section{Proof of Proposition \ref{final result for S3}}
\label{Evaluating S3}

In this section we prove Proposition \ref{final result for S3}, where we evaluate the sum
\begin{equation}
\label{type ii sum}
S_3(\mc{A})\;=\mathop{\sum\sum}_{z\le p_2<p_1<\sqrt{x}}S(\mc{A}_{p_1p_2},p_2) \;= \mathop{\sum\sum\sum}_{\substack{z\le p_2<p_1<\sqrt{x} \\ (b,P(p_2))=1}}a_{p_1p_2b}.
\end{equation}

\begin{proof}[Proof of Proposition \ref{final result for S3}]
We define the quantity
\[
W\;\coloneqq\;\frac{1}{2}\Big(W^+(\chi_0)+W^-(\chi_0)\Big),
\]
where $W^\pm(\chi_0)$ are defined in \eqref{W minus and W plus}. From the definitions of $W^\pm(\chi_0)$ in Section \ref{smooth decoupling}, it is apparent that $W$ does not depend on the ideal class in the definition of the sequence $\mc{A}=(a_n)$. Combining the results of Lemmas \ref{S3 lemma1}, \ref{S3 lemma2}, \ref{bounding main term difference}, and \ref{bounding nonprincipal character contributions} below, we see that \eqref{evaluation of S3} holds, subject to the conditions \eqref{sifting variable condition for S3}.
\end{proof}

In the remainder of this section, we state and prove Lemmas \ref{S3 lemma1}, \ref{S3 lemma2}, \ref{bounding main term difference}, and \ref{bounding nonprincipal character contributions}.

\subsection{First arrangements}

We separate from \eqref{type ii sum} the terms where either $b=1$ or $(b,p_1p_2)>1$, putting
\[
S_3(\mc{A})\;=\;V\;+\;V'\;+\;V'',
\]
where
\[
V \;\coloneqq\; \mathop{\sum\sum\sum}_{\substack{z\le p_2<p_1<\sqrt{x} \\ (b,P(p_2))=1,\; b\ne 1 \\ (b,p_1p_2)=1}}a_{p_1p_2b}
\]
gives the main contribution, and we will show that
\[
V' \;\coloneqq\; \mathop{\sum\sum}_{z\le p_2<p_1<\sqrt{x}}a_{p_1p_2} \quad\text{and}\quad V''\;\coloneqq\;\mathop{\sum\sum\sum}_{\substack{z\le p_2<p_1<\sqrt{x} \\ (b,P(p_2))=1 \\ (b,p_1p_2)>1}}a_{p_1p_2b}
\]
give lesser contributions to $S_3(\mc{A})$. First we estimate $V''$: using $|\lambda_\mc{C}(mn)|\le\tau(mn)\le\tau(m)\tau(n)$, we have
\[
V''\;\le\sum_{z\le p_2<\sqrt{x}}\frac{\tau(p_2)}{p_2}\sum_{z\le p_1<\sqrt{x}}\frac{\tau(p_1)}{p_1}\sum_{\substack{z\le b<\sqrt{x} \\ (b,P(p_2))=1 \\ (b,p_1p_2)>1}}\frac{\tau(b)}{b}.
\]
Write $b=p_ib'$ with $i=1$ or $2$. In either case, the $b$-sum above is bounded by
\begin{equation}
\label{V double prime}
\frac{\tau(p_i)}{z}\sum_{b'\le x}\frac{\tau(b')}{b'}\;\ll\; \frac{(\log x)^2}{z},
\end{equation}
and summing over $p_1,p_2$ shows that the same bound holds for $V''$. 

For $V'$, we have
\[
V'\;=\mathop{\sum\sum}_{z\le p_2<p_1<\sqrt{x}}\frac{1}{p_1p_2}\lambda_{\mc{C}}(p_1p_2)f\Big(\frac{\log p_1p_2}{\log x}\Big).
\]
The support of $f$ forces $p_1p_2\ge x^{1-\nu}$, hence $p_1\ge x^{1-\nu}/p_2\ge x^{1/2-\nu}$, and the same for $p_2$. We now open $\lambda_\mc{C}$ using the class group characters, getting
\begin{align*}
V'&\;\le\;\frac{1}{2}\mathop{\sum\sum}_{\substack{x^{1/2-\nu}\le p_i\le x^{1/2} \\ p_1\ne p_2}}\frac{1}{p_1p_2}\lambda_{\mc{C}}(p_1p_2)\;=\;\frac{1}{2h}\sum_\chi\con{\chi}(\mc{C})\!\!\!\!\!\mathop{\sum\sum}_{\substack{x^{1/2-\nu}\le p_i\le x^{1/2} \\ p_1\ne p_2}}\frac{\lambda_\chi(p_1p_2)}{p_1p_2} \\
&\le\;\frac{1}{2h}\sum_\chi\Big|\sum_{x^{1/2-\nu}\le p\le x^{1/2}}\frac{\lambda_\chi(p)}{p}\Big|^2\;+\;O\Big(\frac{1}{x^{1/2-\nu}}\Big).
\end{align*}
Now we apply Proposition \ref{large sieve type inequality}: we choose the coefficients $|c_n|\le1$ appropriately so that the summation
\[
\sum_{\substack{x^{1/2-\nu}\le n\le x^{1/2} \\ (n,P(z))=1}}\frac{c_n}{n}\lambda_\chi(n)
\]
is supported on prime $n$. Then as long as
\[
s-r/2\;=\;(\tfrac{1}{2}-\tfrac{3}{2}\theta)r\;\ge\;5,
\]
the bound \eqref{large sieve bound} gives
\[
\frac{1}{2h}\sum_\chi\Big|\sum_{x^{1/2-\nu}\le p\le x^{1/2}}\frac{\lambda_\chi(p)}{p}\Big|^2 \;\ll\;\frac{\nu^2r^2}{h}.
\]
Thus we have now established

\begin{lemma}
\label{S3 lemma1}
As long as $(\frac{1}{2}-\frac{3}{2}\theta)r\ge5$ and $r\theta\ll1$, we have
\[
S_3(\mc{A})\;=\;V\;+\;O\Big(\frac{\nu^2r^2}{h}\Big).
\]
\end{lemma}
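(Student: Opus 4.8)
The plan is to split the triple sum in \eqref{type ii sum} according to the innermost variable $b$, writing $S_3(\mc{A}) = V + V' + V''$, where $V$ is the genuine main term (the terms with $b \neq 1$ and $(b, p_1 p_2) = 1$, to be analyzed in the following subsections), $V'$ collects the contribution of $b = 1$, and $V''$ collects the contribution of those $b$ with $(b, p_1 p_2) > 1$. It then suffices to show that each of $V'$ and $V''$ is $O(\nu^2 r^2 / h)$.

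For $V''$ I would argue crudely. Bounding $\lambda_\mc{C}(p_1 p_2 b) \leq \tau(p_1)\tau(p_2)\tau(b)$ and factoring the resulting sum, the condition $(b, p_1 p_2) > 1$ forces $p_i \mid b$ for some $i \in \{1,2\}$; writing $b = p_i b'$ pulls a factor $p_i^{-1} \leq z^{-1}$ out of the $b$-sum, and $\sum_{b' \leq x} \tau(b')/b' \ll (\log x)^2$, so the $b$-sum is $\ll (\log x)^2 / z$. Summing the remaining benign factors over $p_1, p_2 \in [z, \sqrt{x}]$ then gives $V'' \ll x^{-1/r} (\log x)^{O(1)}$. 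Since $r\theta \ll 1$ keeps $\theta/2$ below $1/r$, this is much smaller than $x^{-\theta/2}$, and hence (using $h \ll D^{1/2}\log D$) it is comfortably absorbed into $O(\nu^2 r^2/h)$ once $x$ is large.

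For $V'$ the substantive input is Proposition \ref{large sieve type inequality}. Writing $V' = \sum\sum_{z \le p_2 < p_1 < \sqrt{x}} (p_1 p_2)^{-1}\lambda_\mc{C}(p_1 p_2) f(\log p_1 p_2/\log x)$, the support of $f$ forces $p_1 p_2 \geq x^{1-\nu}$; combined with $p_2 < p_1 < \sqrt{x}$ this confines both primes to the short window $[x^{1/2 - \nu}, x^{1/2}]$. Dropping the bounded factor $f$, symmetrizing in $p_1$ and $p_2$, and expanding $\lambda_\mc{C}$ by the orthogonality \eqref{orthogonality of characters} of the class group characters, one bounds $V'$ by (essentially) $\frac{1}{2h}\sum_\chi \big|\sum_{x^{1/2-\nu} \le p \le x^{1/2}} \lambda_\chi(p)/p\big|^2$ up to a negligible diagonal term of size $O(x^{-1/2+\nu})$. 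Choosing coefficients $c_n$ with $|c_n| \leq 1$ supported on primes (the constraint $(n, P(z)) = 1$ being automatic since $p \gg z$ on this range) casts the inner sum into the shape of \eqref{large sieve bound} with $(\alpha_1, \alpha_2) = (1/2 - \nu, 1/2)$; the hypotheses $(\alpha_1 - \tfrac{3}{2}\theta)r \geq 5$ and $r\theta \ll 1$ follow from our assumptions (the discrepancy $\nu r$ between $\alpha_1 = 1/2 - \nu$ and $1/2$ being negligible), so Proposition \ref{large sieve type inequality} gives $\ll (\alpha_2 - \alpha_1)^2 r^2 = \nu^2 r^2$, hence $V' \ll \nu^2 r^2/h$. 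Combining the two bounds proves the lemma.

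I do not expect a deep obstacle: this lemma is a preparatory clean-up step whose only real ingredient is Proposition \ref{large sieve type inequality}, which has already been established. The one point requiring genuine care is that the localization $p_i \in [x^{1/2 - \nu}, x^{1/2}]$ is narrow enough for the large sieve to deliver the full saving $(\alpha_2 - \alpha_1)^2 = \nu^2$ rather than merely $O(1)$ — this is exactly the place where the width-$\nu$ support of $f$ is converted into the factor $\nu^2$ appearing in the error term.
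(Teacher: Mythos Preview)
Your proposal is correct and follows essentially the same approach as the paper's own proof: the same decomposition $S_3(\mc{A})=V+V'+V''$, the same crude divisor-function bound $V''\ll(\log x)^2/z$ via $b=p_ib'$, and the same treatment of $V'$ by localizing both primes to $[x^{1/2-\nu},x^{1/2}]$, opening with class group characters, and applying Proposition~\ref{large sieve type inequality}. The only cosmetic difference is that the paper writes the character expansion of $V'$ out explicitly before bounding by the square, whereas you describe the step in words; the arguments are otherwise identical.
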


\begin{remarks}
Here we have used our large sieve-type inequality (Proposition \ref{large sieve type inequality}) to not lose the factor $h$ (or any logarithmic factors) after expanding via the class group characters $\chi\in\wh{\mc{H}}$. 

Lemma \ref{S3 lemma1} indicates that the sum $V'$ does in fact contribute to a positive proportion of the sum $S_3(\mc{A})$. However, this proportion is (so to speak) of a lower order of magnitude (proportional to $\nu^2$) than the full sum $S_3(\mc{A})$ (proportional to $\nu$), which is due to the short range of the variables $p_1,p_2$.  
\end{remarks}

\subsection{A smooth decoupling}
\label{smooth decoupling}

For the remaining terms $V$ from $S_3(\mc{A})$, we have $b\ge p_2$, hence
\[
x \;\ge\; p_1p_2b\;\ge\; p_1p_2^2\;>\;p_2^3,
\]
and so $p_2< x^{1/3}$.  We make a smooth partition of the variable $p_2$ into segments that are geometric in the logarithmic scale. First, we partition the range
\[
z\;\le\; p_2\;<\;x^{1/3}
\]
into segments $z_{j-1}\le p_2\le z_j$ with $j=1,2,\dots, J$, where $z_j$ are given by
\[
z_j = z^{\alpha^j}=x^{\alpha^j/r}, \qquad z^{\alpha^J}=x^{1/3},
\]
so $J\ge1$ is at our disposal, and it determines $\alpha>1$. Now we make a smooth partition from these points $z_j$ in the following way:
\begin{itemize}
\item we now let the index $j$ run over half-integers $\frac{1}{2}, 1, \frac{3}{2}, \dots, J, J+\frac{1}{2}$;
\item for each such $j$, let $0\le h_j(t)\le 1$ be a smooth bump function supported on $[\alpha^{j-1}/r,\alpha^j/r]$, such that for $j=\frac{1}{2},1,\frac{3}{2},\dots,J$ we have
\[
h_{j}(t)+h_{j+1/2}(t)=1\qquad\text{for}\qquad t\in [\alpha^j/r,\alpha^{j+1/2}/r].
\]
\end{itemize}
We put
\[
h^-(t)\;=\sum_{1\le j\le J}h_j(t)\qquad\text{and}\qquad h^+(t)\;=\sum_{\frac{1}{2}\le j\le J+\frac{1}{2}}h_j(t),
\]
and from the above properties we get
\begin{align}
\label{h plus and minus}
h^-(t)\;\le\;\vec{1}_{[1/r,\;1/3]}(t)\;\le\; h^+(t)\qquad&\text{for all }t, \\
h^+(t)\;=\;1\qquad&\text{for }\;\;1/r\le t\le1/3, \nonumber \\
h^-(t)\;=\;1\qquad&\text{for }\;\;\alpha^{1/2}/r\le t\le\alpha^{J+1/2}/r. \nonumber
\end{align}
We now use this smooth partition of unity to decouple the variables $p_1,p_2,b$. From \eqref{h plus and minus} we have
\begin{align*}
V&\;\ge \sum_{1\le j\le J}\sum_{p_2}h_j\Big(\frac{\log p_2}{\log x}\Big) \sum_{p_2<p_1<\sqrt{x}}\sum_{\substack{p_2\le b \\ (b,P(p_2))=1 \\ (b,p_1p_2)=1}}a_{p_1p_2b},\qquad\text{and} \\
V&\;\le \sum_{\frac{1}{2}\le j\le J+\frac{1}{2}}\sum_{p_2}h_j\Big(\frac{\log p_2}{\log x}\Big) \sum_{p_2<p_1<\sqrt{x}}\sum_{\substack{p_2\le b \\ (b,P(p_2))=1 \\ (b,p_1p_2)=1}}a_{p_1p_2b}.
\end{align*}
The conditions
\begin{equation}
\label{p1 and b entangled}
p_1>p_2, \qquad b\ge p_2, \qquad \text{and} \qquad (b,P(p_2))=1
\end{equation}
entangle $p_1$ and $b$ with $p_2$, so we adjust them to decouple these variables. The variable $p_2$ lies in the restricted range $z_{j-1}\le p_2\le z_j$ by the support of $h_j$, and so (by positivity) we replace the three conditions \eqref{p1 and b entangled} respectively by
\[
p_1>z_j, \qquad b\ge z_j, \qquad\text{and}\qquad (b,P(z_j))=1
\]
in the lower bound for $V$, and with
\[
p_1>z_{j-1}, \qquad b\ge z_{j-1},\qquad\text{and}\qquad (b,P(z_{j-1}))=1
\]
in the upper bound for $V$. After these adjustments we have
\begin{align}
\label{V lower bound}
V &\;\ge\; W^-\;\coloneqq \sum_{1\le j\le J}\sum_{p_2}h_j\Big(\frac{\log p_2}{\log x}\Big) \sum_{z_j<p_1<\sqrt{x}}\sum_{\substack{z_j\le b \\ (b,P(z_j))=1 \\ (b,p_1p_2)=1}}a_{p_1p_2b},\qquad\text{and} \\
\label{V upper bound}
V &\;\le\; W^+\;\coloneqq \sum_{\frac{1}{2}\le j\le J+\frac{1}{2}}\sum_{p_2}h_j\Big(\frac{\log p_2}{\log x}\Big) \sum_{z_{j-1}<p_1<\sqrt{x}}\sum_{\substack{z_{j-1}\le b \\ (b,P(z_{j-1}))=1 \\ (b,p_1p_2)=1}}a_{p_1p_2b}.
\end{align}
Now we open $a_{p_1p_2b}$ using characters: by \eqref{definition of a_n} and \eqref{orthogonality of characters} we have
\[
a_n=\frac{1}{h}\sum_{\chi\in\wh{\mc{H}}}\con{\chi}(\mc{C})\frac{\lambda_\chi(n)}{n}f\Big(\frac{\log n}{\log x}\Big),
\]
which we put into \eqref{V lower bound} and \eqref{V upper bound}. By our adjustments above, we have arranged that $p_1,p_2,b$ are automatically pairwise coprime, so the multiplicativity of $\lambda_\chi$ now gives
\begin{align*}
W^- &\;=\; \frac{1}{h}\sum_{1\le j\le J}\sum_\chi\con{\chi}(\mc{C})\sum_{z_j<p_1<\sqrt{x}}\frac{\lambda_{\chi}(p_1)}{p_1} \\
&\phantom{\frac{1}{h}\sum_{1\le j\le J}\sum_\chi\con{\chi}(\mc{C})}\cdot\sum_{\substack{z_j\le b \\ (b,P(z_j))=1 \\ (b,p_1p_2)=1}}\frac{\lambda_{\chi}(b)}{b}\sum_{p_2}\frac{\lambda_{\chi}(p_2)}{p_2}h_j\Big(\frac{\log p_2}{\log x}\Big)f\Big(\frac{\log p_1p_2b}{\log x}\Big),
\end{align*}
and
\begin{align*}
W^+ &\;=\; \frac{1}{h}\sum_{\frac{1}{2}\le j\le J+\frac{1}{2}}\sum_\chi\con{\chi}(\mc{C})\sum_{z_{j-1}<p_1<\sqrt{x}}\frac{\lambda_{\chi}(p_1)}{p_1} \\
&\phantom{\frac{1}{h}\sum_{1\le j\le J}\sum_\chi\con{\chi}(\mc{C})}\cdot\sum_{\substack{z_{j-1}\le b \\ (b,P(z_{j-1}))=1 \\ (b,p_1p_2)=1}}\frac{\lambda_{\chi}(b)}{b}\sum_{p_2}\frac{\lambda_{\chi}(p_2)}{p_2}h_j\Big(\frac{\log p_2}{\log x}\Big)f\Big(\frac{\log p_1p_2b}{\log x}\Big).
\end{align*}
Finally, we wish to remove the conditions $(b,p_1p_2)=1$ now that we have made use of them to decouple $\lambda_\chi(p_1p_2b)$. To do so, we add back the missing terms, which are bounded by the same error term in \eqref{V double prime} from before. Putting
\begin{align*}
W^-_j(\chi) &\;=\sum_{z_j<p_1<\sqrt{x}}\!\!\!\!\!\frac{\lambda_{\chi}(p_1)}{p_1}\!\!\!\!\!\!\sum_{\substack{z_j\le b \\ (b,P(z_j))=1}}\!\!\!\!\!\!\!\frac{\lambda_{\chi}(b)}{b}\sum_{p_2}\frac{\lambda_{\chi}(p_2)}{p_2}h_j\Big(\frac{\log p_2}{\log x}\Big)f\Big(\frac{\log p_1p_2b}{\log x}\Big),\quad\text{and} \\
W^+_j(\chi) &\;=\sum_{z_{j-1}<p_1<\sqrt{x}}\!\!\!\!\!\frac{\lambda_{\chi}(p_1)}{p_1}\!\!\!\!\!\!\!\!\sum_{\substack{z_{j-1}\le b \\ (b,P(z_{j-1}))=1}}\!\!\!\!\!\!\!\!\!\frac{\lambda_{\chi}(b)}{b}\sum_{p_2}\frac{\lambda_{\chi}(p_2)}{p_2}h_j\Big(\frac{\log p_2}{\log x}\Big)f\Big(\frac{\log p_1p_2b}{\log x}\Big),
\end{align*}
we have shown the following

\begin{lemma}
\label{S3 lemma2}
We have
\begin{align}
\label{lemma V into characters lower}
V\;&\gg\;\frac{1}{h}\sum_{1\le j\le J}\sum_\chi\con{\chi}(\mc{C}) W_j^-(\chi),\quad\text{and} \\ 
\label{lemma V into characters upper}
V\;&\ll\;\frac{1}{h}\sum_{\frac{1}{2}\le j\le J+\frac{1}{2}}\sum_\chi\con{\chi}(\mc{C}) W_j^+(\chi).
\end{align}
\end{lemma}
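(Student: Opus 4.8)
The statement is a repackaging of the chain of reductions carried out in the paragraphs immediately preceding it, so the plan is to execute those reductions carefully, keeping the upper and lower estimates in lockstep. At this point $V$ is supported on $z\le p_2<x^{1/3}$, i.e.\ on $1/r\le\log p_2/\log x<1/3$, the upper cutoff coming from $x\ge p_1p_2b\ge p_1p_2^2>p_2^3$. \emph{First}, using $a_n\ge0$ together with \eqref{h plus and minus}, I would insert the smooth partition of unity: replacing $\vec{1}_{[1/r,1/3]}(\log p_2/\log x)$ by $h^-(\log p_2/\log x)$ from below and by $h^+(\log p_2/\log x)$ from above, and then expanding $h^\pm$ into the individual bumps $h_j$, produces the two-sided bound displayed just before \eqref{V lower bound}, still carrying the coupled constraints $p_1>p_2$, $b\ge p_2$, $(b,P(p_2))=1$ of \eqref{p1 and b entangled}.

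\emph{Second}, I would decouple the variables. On the support of $h_j$ one has $z_{j-1}\le p_2\le z_j$, so by positivity I would tighten the coupled constraints, in the lower bound, to $p_1>z_j$, $b\ge z_j$, $(b,P(z_j))=1$ (each stronger than its counterpart in \eqref{p1 and b entangled}, since $p_2\le z_j$ and $P(p_2)\mid P(z_j)$), and relax them, in the upper bound, to $p_1>z_{j-1}$, $b\ge z_{j-1}$, $(b,P(z_{j-1}))=1$ (each weaker, since $p_2\ge z_{j-1}$). This produces exactly $W^-\le V\le W^+$ with $W^\pm$ as in \eqref{V lower bound} and \eqref{V upper bound}. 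This is the only step that requires real care: the point of using $z_j$ in the lower bound and $z_{j-1}$ in the upper bound is precisely to make each relaxation go in the direction consistent with the corresponding one-sided inequality and with the support of $h_j$.

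\emph{Third}, I would open $a_{p_1p_2b}$ via \eqref{definition of a_n} and the orthogonality \eqref{orthogonality of characters}, $a_n=h^{-1}\sum_\chi\con\chi(\mc C)\lambda_\chi(n)n^{-1}f(\log n/\log x)$. After the decoupling the variables $p_1,p_2,b$ are pairwise coprime ($p_1>z_j\ge p_2$ forces $p_1\ne p_2$, and the condition $(b,p_1p_2)=1$ is still present), so the Hecke relations \eqref{Hecke relations} give $\lambda_\chi(p_1p_2b)=\lambda_\chi(p_1)\lambda_\chi(p_2)\lambda_\chi(b)$ and the inner sum factors into a sum over $p_1$, a sum over $b$ coprime to $P(z_j)$ (resp.\ $P(z_{j-1})$), and a sum over $p_2$ carrying the weights $h_j$ and $f(\log p_1p_2b/\log x)$; this is $W_j^\pm(\chi)$ except for the surviving condition $(b,p_1p_2)=1$. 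Finally I would drop that condition: the terms with $p_1\mid b$ or $p_2\mid b$ contribute, after writing $b=p_ib'$ and bounding $|\lambda_\chi|\le\tau$ trivially, at most $\ll z^{-1}(\log x)^2$ to each $W_j^\pm(\chi)$, exactly as in \eqref{V double prime}; summing over the finitely many indices $j$ and over $\chi\in\wh{\mc H}$ (using $h\ll D^{1/2}\log D$, $z=x^{1/r}$, $D=x^\theta$, $r\theta\ll1$) this total is negligible against $h^{-1}$, so it is absorbed into the $\gg$ and $\ll$, giving \eqref{lemma V into characters lower} and \eqref{lemma V into characters upper}. No analytic obstacle arises; the entire content is bookkeeping, the only real inputs being \eqref{V double prime} and the factorizations furnished by \eqref{orthogonality of characters} and \eqref{Hecke relations}.
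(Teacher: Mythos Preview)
Your proposal is correct and follows exactly the paper's own argument: insert the smooth partition $h^\pm$ by positivity, decouple the constraints \eqref{p1 and b entangled} using $z_j$ (lower) and $z_{j-1}$ (upper) on the support of $h_j$, open with characters via \eqref{orthogonality of characters}, factor $\lambda_\chi$ by multiplicativity, and drop $(b,p_1p_2)=1$ at the cost of the $V''$-type error \eqref{V double prime}. One minor remark: in the upper bound $W^+$ the conditions $p_1>z_{j-1}$ and $z_{j-1}\le p_2\le z_j$ do not by themselves force $p_1\ne p_2$, so pairwise coprimality is not literally automatic there; but the diagonal $p_1=p_2$ contributes $\ll z^{-1}(\log x)^2$ by the same trivial bound and is absorbed into the same error term (the paper glosses over this too).
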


\subsection{The contribution of the principal character}

Now we extract from \eqref{lemma V into characters lower} and \eqref{lemma V into characters upper} the contribution from the principal character $\chi=\chi_0$, which constitutes the main part of $V$. Accordingly we put
\begin{equation}
\label{W minus and W plus}
W^-(\chi_0)=\frac{1}{h}\sum_{1\le j\le J}W^-_j(\chi_0) \qquad\text{and}\qquad W^+(\chi_0)=\frac{1}{h}\sum_{\frac{1}{2}\le j\le J+\frac{1}{2}}W^+_j(\chi_0),
\end{equation}
and we will show that the difference
\[
W^+(\chi_0)-W^-(\chi_0)\;=\;\frac{1}{h}\Big(W_{1/2}^+(\chi_0)+W_{J+1/2}^+(\chi_0)+\sum_{1\le j\le J}(W_j^+(\chi_0)-W_j^-(\chi_0))\Big)
\]
is comparably small.

\begin{lemma}
\label{bounding main term difference}
Let $k\ge1$, and suppose that $\nu\le1/20$, $r\ge10$, and
\begin{equation}
\label{main term difference sifting variable condition}
(\tfrac{1}{2r}-\tfrac{3}{2}\theta)kr\;\ge\;5.
\end{equation}
Then we have
\[
W^+(\chi_0)-W^-(\chi_0)\;\ll\;\frac{1}{h}kr(\log r)^2e^{-c/18r\theta}.
\]
\end{lemma}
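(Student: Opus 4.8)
The starting point is the splitting displayed just above the lemma,
\[
W^+(\chi_0)-W^-(\chi_0)=\frac1h\Big(W_{1/2}^+(\chi_0)+W_{J+1/2}^+(\chi_0)+\sum_{1\le j\le J}\big(W_j^+(\chi_0)-W_j^-(\chi_0)\big)\Big),
\]
so it suffices to bound the two endpoint blocks $W_{1/2}^+(\chi_0)$, $W_{J+1/2}^+(\chi_0)$ and each consecutive difference $W_j^+(\chi_0)-W_j^-(\chi_0)$, and then sum over $j$.

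I would evaluate each block $W_j^\pm(\chi_0)$ from the inside out. Writing $u_i=\log p_i/\log x$, the innermost sum runs over $b$ with no prime factor below $z_j$ (resp.\ $z_{j-1}$), weighted by $\lambda_{\chi_0}(b)b^{-1}f(u_1+u_2+\tfrac{\log b}{\log x})$; since $p_1<\sqrt x$ and $p_2<x^{1/3}$, this cutoff localizes $\tfrac{\log b}{\log x}$ to a window of length $\nu$ with left endpoint at least $\tfrac16-\nu$. The main difficulty is that $z_j$ can be as large as $x^{1/3}$, so one cannot invoke the Fundamental Lemma at sifting level $z_j$ itself. I would instead lower the level, replacing $(b,P(z_j))=1$ by $(b,P(z^\ast))=1$ for a suitable $z^\ast$ of the form $x^{1/(kr)}$ and bounding the Buchstab correction terms (in which $b$ has a prime factor in $[z^\ast,z_j)$), together with the terms restoring $(b,p_1p_2)=1$ removed earlier for decoupling, by the same estimate \eqref{V double prime} as before. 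The $z^\ast$-sifted $b$-sum is then handled by Corollary \ref{corollary lambda chi with sieve weights} applied to $\chi=\chi_0$ with the shifted test function $\phi=f(\,\cdot\,+u_1+u_2)$ (so that $\wh\phi(0)=\wh f(0)$); the hypothesis \eqref{main term difference sifting variable condition} is precisely what makes this legitimate for every $j$ up to $J$, the binding case being the endpoint $j=\tfrac12$, where $b\ge z_{-1/2}=x^{\alpha^{-1/2}/r}$, which is $\ge x^{1/(2r)}$ once $\alpha$ is bounded. This produces a main term proportional to $XV(z^\ast)$ with a relative error $O(e^{-c/3kr\theta})$ that is independent of $p_1$ and $p_2$. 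Finally the two remaining prime sums are evaluated using $\lambda_{\chi_0}(p)=1+\chi_D(p)$: the contribution of the $1$'s is given by Mertens' theorem over the ranges $z_j<p_1<\sqrt x$ and $\operatorname{supp}h_j$, while the contribution of the $\chi_D(p)$'s is controlled by the zero-free-region estimate for smooth sums of $\chi_D(p)/p$ available under Hypothesis $\text{H}(c)$, which is exponentially small because $p_1,p_2\ge z^{\alpha^{-1/2}}$.

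With each block so evaluated, the main parts of $W_j^+(\chi_0)$ and $W_j^-(\chi_0)$ agree up to the thin geometric slices $z_{j-1}<p_1\le z_j$, $z_{j-1}\le b<z_j$ and the change of the coprimality modulus from $P(z_{j-1})$ to $P(z_j)$, each of which costs only a factor $O(\log\alpha)$; the two endpoint blocks are likewise single elements of the partition. Choosing $\alpha$ a fixed constant greater than $1$ (so that $J=\log(r/3)/\log\alpha\asymp\log r$, which also secures the bound $z_{-1/2}\ge x^{1/(2r)}$ used above) and summing over the $\asymp\log r$ indices $j$, one collects a polynomial factor $kr(\log r)^2$ — roughly one power each of $r$ and $k$ from $XV(z^\ast)$, one $\log r$ from the Mertens factor on the long $p_1$-range, and one more $\log r$ from the summation over $j$ — times the weakest of the exponential savings produced by the Fundamental-Lemma errors and the $\chi_D$-cancellations. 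Tracking the rates at which these enter, the weakest is no larger than $e^{-c/18r\theta}$ (the constant $18$ is not optimized), and one gets $W^+(\chi_0)-W^-(\chi_0)\ll h^{-1}kr(\log r)^2e^{-c/18r\theta}$.

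The hard part is the bookkeeping that guarantees the main terms of $W_j^+(\chi_0)$ and $W_j^-(\chi_0)$ genuinely cancel down to the $O(\log\alpha)$ slices, uniformly in $j$, despite the earlier loosening of the coupling conditions \eqref{p1 and b entangled} — in particular, choosing one auxiliary sifting level $z^\ast=x^{1/(kr)}$ that works for all $j\le J$ while remaining strong enough for Corollary \ref{corollary lambda chi with sieve weights}, and then checking that, once multiplied by the main-term size and summed over the $\asymp\log r$ blocks, none of the accumulated error terms exceeds the target $h^{-1}kr(\log r)^2e^{-c/18r\theta}$. This last verification is what dictates the shape of condition \eqref{main term difference sifting variable condition}, and downstream the choice $k=r^{1/2}$ in \eqref{parameter choices}.
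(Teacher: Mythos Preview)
Your overall shape is right — decompose into endpoint blocks and the telescoping differences $W_j^+(\chi_0)-W_j^-(\chi_0)$, bound each piece by a factor of $\log\alpha$ (coming from the thin $p_2$- or $p_1$-slice or from the change of coprimality modulus), and then sum over $j$. The fatal problem is your choice of the partition parameter. You take $\alpha$ to be a fixed constant, so that $J\asymp\log r$ and $\log\alpha\asymp1$. But with that choice the endpoint block $W_{1/2}^+(\chi_0)$ alone is of size
\[
\Big(\sum_{z_{-1/2}\le p_2\le z_{1/2}}\frac{1+\chi_D(p_2)}{p_2}\Big)\cdot(\text{rest})\;\asymp\;(\log\alpha)\cdot kr\log r\;\asymp\;kr\log r,
\]
which has no exponential decay in $1/r\theta$ at all; the Fundamental-Lemma errors and the $\chi_D$-cancellations you invoke are lower-order corrections \emph{on top} of this $\log\alpha$ term, not substitutes for it. The required saving $e^{-c/18r\theta}$ therefore cannot be extracted from your scheme.

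In the paper the exponential decay comes precisely from taking $\log\alpha$ itself exponentially small: one chooses
\[
J=\big[(\log r)\,e^{c/18r\theta}\big]+1,\qquad\text{so that}\qquad \log\alpha=\frac{\log(r/3)}{J}\;\ll\;e^{-c/18r\theta}.
\]
With this choice the endpoint blocks satisfy $W_{1/2}^+(\chi_0),\,W_{J+1/2}^+(\chi_0)\ll kr(\log r)(\log\alpha+e^{-c/6r\theta})$, and each difference, after the positivity decomposition $W_j^+-W_j^-\le U_1+U_2$ (where $U_1$ has $p_1$ in the thin slice and $U_2$ has $b$ in the set $B_j$), is $\ll kr(\log r)(\log\alpha+e^{-c/3r\theta})^2$. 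Summing the latter over $j\le J$ costs one factor of $J$, but $J(\log\alpha+e^{-c/3r\theta})\ll\log r$, so one lands on the stated bound. Note also that the paper does not attempt to compute each $W_j^\pm(\chi_0)$ asymptotically and subtract; it majorizes $W_j^+-W_j^-$ directly by positivity, which is both simpler and avoids the bookkeeping you flag as the ``hard part.''
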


\begin{remark}
The variable $k\ge1$ plays no essential theoretical role, but it must be present for technical reasons. On a mechanical level, it is a parameter that can be taken larger to ensure that Corollary \ref{corollary lambda chi with sieve weights} is applicable even when the range of the involved summation includes (relatively) very small integers.
\end{remark}

To prove this lemma, we will use the following couple of estimates.

\begin{lemma}
\label{upper bound for sum over primes}
Let $0<\alpha_1<\alpha_2<1$, and suppose that $L(s,\chi_D)$ satisfies Hypothesis $\text{H}(c)$. Then we have
\[
\sum_{x^{\alpha_1}\le p\le x^{\alpha_2}}\frac{\lambda_{\chi_0}(p)}{p}\;\ll\;\log(\alpha_2/\alpha_1)\;+\;e^{-c\alpha_1/3\theta}.
\]
\end{lemma}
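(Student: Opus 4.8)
The statement to prove is Lemma~\ref{upper bound for sum over primes}: under Hypothesis $\text{H}(c)$ for $L(s,\chi_D)$, we have $\sum_{x^{\alpha_1}\le p\le x^{\alpha_2}}\frac{\lambda_{\chi_0}(p)}{p}\ll\log(\alpha_2/\alpha_1)+e^{-c\alpha_1/3\theta}$. Since $\lambda_{\chi_0}(p)=1+\chi_D(p)$ by \eqref{trivial character formula}, the sum splits as $\sum_{x^{\alpha_1}\le p\le x^{\alpha_2}}\frac1p + \sum_{x^{\alpha_1}\le p\le x^{\alpha_2}}\frac{\chi_D(p)}{p}$. The first piece is handled immediately by Mertens' theorem: it is $\log(\alpha_2\log x)-\log(\alpha_1\log x)+O(1/\log x)=\log(\alpha_2/\alpha_1)+O(1)$, which is absorbed into the stated bound (possibly adjusting the implied constant, since $\log(\alpha_2/\alpha_1)$ can be small but the $O(1)$ is harmless once one checks that for $\alpha_2/\alpha_1$ close to $1$ the left side is genuinely small — alternatively one simply states the bound with an additive $O(1)$, but here the cleaner route is to note the whole sum is trivially $O(\log\log x)$ and the content is really in the character sum).

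**Main step: the character sum.** The heart of the matter is bounding $\sum_{x^{\alpha_1}\le p\le x^{\alpha_2}}\frac{\chi_D(p)}{p}$. The natural tool is a smooth version handled via the explicit formula for $L(s,\chi_D)$ together with the zero-free region coming from Hypothesis $\text{H}(c)$. Indeed, the paper has already invoked (in the proof of Theorem~\ref{nonexceptional theorem}) estimates \eqref{smooth summation for chiD}, \eqref{approximation to L1chiD}, and a "Corollary~\ref{corollary summation for chiD}" for precisely such sums; the plan is to reduce to those. First I would attach a smooth majorant/partition: choose $0\le\psi(u)\le1$ smooth, supported on $[\alpha_1-\eta,\alpha_2+\eta]$ with $\psi\equiv1$ on $[\alpha_1,\alpha_2]$, for a suitable small $\eta\asymp\alpha_1$ (say $\eta=\alpha_1/2$), and compare $\sum_{x^{\alpha_1}\le p\le x^{\alpha_2}}\frac{\chi_D(p)}{p}$ with $\sum_p\frac{\chi_D(p)}{p}\psi\big(\frac{\log p}{\log x}\big)$; the difference, supported on $p\in[x^{\alpha_1-\eta},x^{\alpha_1}]\cup[x^{\alpha_2},x^{\alpha_2+\eta}]$, is $O(1)$ by Mertens and so again harmless. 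Then apply the smooth character-sum bound (the analogue of \eqref{smooth summation for chiD} for a test function supported in $[\alpha_1-\eta,\alpha_2+\eta]$), which under Hypothesis $\text{H}(c)$ gives a bound of the shape $\ll e^{-c'(\alpha_1-\eta)/\theta}$ with $c'$ essentially $c$; choosing $\eta=\alpha_1/2$ (or whatever makes $(\alpha_1-\eta)c'$ come out to $c\alpha_1/3$) yields the claimed $e^{-c\alpha_1/3\theta}$.

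**The obstacle and how I would address it.** The one delicate point is tracking constants through the explicit formula: the classical zero-free region argument (Proposition~\ref{proposition explicit formula in log scale} combined with Hypothesis~$\text{H}(c)$ and the growth of $L'/L$ outside the zero-free region) produces a factor $\exp(-\kappa\,\beta_0\log x)$ where $\beta_0$ is the width of the zero-free region $\asymp c/\log D=c/(\theta\log x)$, so $\exp(-\kappa c\alpha_1/\theta)$ for some $\kappa\in(0,1)$ coming from the support; the numerical constant $\tfrac13$ in the exponent is exactly the slack one keeps to absorb (i) the contraction from smoothing, (ii) the implied constants in the bound $\gamma'/\gamma\ll\log$, and (iii) the contribution of zeros with $|\gamma|$ between $0$ and $1$ all pushed back to real part $\le 1-c/\log D$. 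I expect the bookkeeping — rather than any conceptual difficulty — to be the main work, and it is essentially the same computation already carried out for \eqref{smooth summation for chiD}. So the proof is: split via $\lambda_{\chi_0}=1+\chi_D$; bound the $1$-part by Mertens; pass from the sharp-cutoff $\chi_D$-sum to a smoothly-cutoff one at cost $O(1)$; invoke the Hypothesis-$\text{H}(c)$ smooth bound for $\sum_p\frac{\chi_D(p)}{p}\psi(\log p/\log x)$ with support parameter $\alpha_1-\eta$, $\eta=\alpha_1/2$; and combine.
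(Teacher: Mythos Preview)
Your overall approach matches the paper's: split $\lambda_{\chi_0}(p)=1+\chi_D(p)$, handle the $1$-part by Mertens, and handle the $\chi_D$-part using Hypothesis~$\text{H}(c)$. The paper's proof is one line, because Corollary~\ref{corollary summation for chiD} already contains the \emph{sharp-cutoff} estimate \eqref{summation for chiD}
\[
\sum_{x^{\alpha_1}\le p<x^{\alpha_2}}\frac{\chi_D(p)}{p}\;\ll\;e^{-c\alpha_1/3\theta},
\]
so no further work is required on your side.

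Your detour through a smooth majorant, however, introduces a genuine gap. With $\eta=\alpha_1/2$ the boundary error from smoothing the $\chi_D$-sum is bounded trivially by
\[
\sum_{x^{\alpha_1-\eta}\le p\le x^{\alpha_1}}\frac{1}{p}\;+\;\sum_{x^{\alpha_2}\le p\le x^{\alpha_2+\eta}}\frac{1}{p}\;=\;\log 2+\log\!\Big(1+\frac{\alpha_1}{2\alpha_2}\Big)+O\!\Big(\frac{1}{\log x}\Big)\;\asymp\;1,
\]
and this $O(1)$ is \emph{not} absorbed by $\log(\alpha_2/\alpha_1)+e^{-c\alpha_1/3\theta}$ when both of those quantities are small. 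This is exactly the regime that matters: in Lemma~\ref{bounding main term difference} the bound is applied with $\log(\alpha_2/\alpha_1)=\log\alpha\ll e^{-c/18r\theta}$, so an additive $O(1)$ would destroy the conclusion. The fix is either to cite \eqref{summation for chiD} directly, or, if you want to redo the smoothing yourself, to take the smoothing width of order $e^{-c\alpha_1/3\theta}$ rather than $\alpha_1/2$; this is precisely the choice $\eps=e^{-c/3\theta}$ made at the end of the proof of Proposition~\ref{proposition approximating L function by Euler product}.

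One minor point: Mertens gives the $1$-part as $\log(\alpha_2/\alpha_1)+O(1/(\alpha_1\log x))$, not $+O(1)$, so your parenthetical worry in the first paragraph is unfounded.
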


\begin{proof}
We have $\lambda_{\chi_0}(p)=1+\chi_D(p)$, so the result follows directly from Mertens' theorem and Corollary \ref{corollary summation for chiD}.
\end{proof}

\begin{lemma}
\label{upper bound for sum over almost primes}
Let $0<\alpha_1<\alpha_2<1$. Let $k\ge1$, and suppose that
\begin{equation}
\label{condition for sifting variable for b sum}
(\alpha_1-\tfrac{3}{2}\theta)kr\;\ge\;5.
\end{equation}
Then for any $w\ge z^{1/k}=x^{1/kr}$, we have
\begin{equation}
\label{bound for lambda chi0 b over almost primes}
\sum_{\substack{x^{\alpha_1}\le b\le x^{\alpha_2} \\ (b,P(w))=1}}\frac{\lambda_{\chi_0}(b)}{b} \;\ll\; kr.
\end{equation}
\end{lemma}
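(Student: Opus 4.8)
The plan is to majorize the sum by attaching upper-bound beta-sieve weights and then to invoke Corollary~\ref{corollary lambda chi with sieve weights} for the principal character. Two structural facts make this work: first, $\lambda_{\chi_0}(b)=(1*\chi_D)(b)$ is nonnegative, being the number of integral ideals of norm $b$; and second, since $w\ge z^{1/k}=x^{1/kr}$, the coprimality condition $(b,P(w))=1$ forces every prime factor of $b$ to exceed $x^{1/kr}$, hence $(b,P(x^{1/kr}))=1$.

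Concretely, I would fix a small $\eps<1/(10kr)$ and a smooth function $0\le\phi\le1$ supported on $[\alpha_1-\eps,\alpha_2+\eps]$ with $\phi\equiv1$ on $[\alpha_1,\alpha_2]$; note that \eqref{condition for sifting variable for b sum} forces $\alpha_1>5/(kr)>\eps$, so $\phi$ vanishes near the origin. Let $\xi^+=(\xi_d^+)$ be the upper-bound beta-sieve weights of level $y=x^{\alpha_1-\eps}/D^{3/2}$ that sift out the primes $p<x^{1/kr}$, and set $\theta^+=1*\xi^+$. Since $\theta_b^+\ge\sum_{d\mid(b,P(x^{1/kr}))}\mu(d)=\mathbf{1}_{(b,P(x^{1/kr}))=1}$ for every $b$, multiplying through by the nonnegative quantity $b^{-1}\lambda_{\chi_0}(b)\phi(\log b/\log x)$ and summing over all $b\ge1$ gives
\[
\sum_{\substack{x^{\alpha_1}\le b\le x^{\alpha_2}\\(b,P(w))=1}}\frac{\lambda_{\chi_0}(b)}{b}\;\le\;\sum_{b\ge1}\frac{\theta_b^+}{b}\lambda_{\chi_0}(b)\phi\Big(\frac{\log b}{\log x}\Big),
\]
since on the left-hand range $\phi\equiv1$ and $(b,P(w))=1$ implies $(b,P(x^{1/kr}))=1$.

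It then remains to apply Corollary~\ref{corollary lambda chi with sieve weights} to the right-hand side, taking the parameter $r$ there to be $kr$ (so that the sieving level of that corollary becomes $x^{1/kr}$, matching $\xi^+$) and replacing $\alpha_2$ there by $\alpha_2+\eps$. The hypothesis of the corollary then reads $(\alpha_1-\tfrac32\theta)(kr)\ge5$, which is precisely \eqref{condition for sifting variable for b sum}. Since the sum is taken over the principal character $\chi_0$ and Hypothesis $\text{H}(c)$ is in force throughout this section, the corollary yields
\[
\sum_{b\ge1}\frac{\theta_b^+}{b}\lambda_{\chi_0}(b)\phi\Big(\frac{\log b}{\log x}\Big)\;\le\;XV(x^{1/kr})\bigl\{1+O((\log x)^{-1/6})\bigr\},
\]
with $XV(x^{1/kr})=e^{-\gamma}\wh{\phi}(0)(kr)\bigl\{1+O(e^{-c/3kr\theta})\bigr\}\ll\wh{\phi}(0)\,kr$. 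As $\wh{\phi}(0)=\int\phi\le\alpha_2-\alpha_1+2\eps<1$, the right-hand side is $\ll kr$, which is \eqref{bound for lambda chi0 b over almost primes}.

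The step requiring the most care is not a genuine obstacle but rather the bookkeeping in rescaling the sieving range from $z=x^{1/r}$ to $x^{1/kr}$: one must confirm that the level-of-distribution input underlying Corollary~\ref{corollary lambda chi with sieve weights} (ultimately Proposition~\ref{corollary summation formula over progressions}) remains valid at level $y=x^{\alpha_1-\eps}/D^{3/2}$, which it is for $x$ large, and one should observe that it is harmless here to control the error $e^{-c/3kr\theta}$ in the conditional evaluation of $XV(x^{1/kr})$ only as $O(1)$: we want just the order of magnitude $kr$, so no restriction of the form $kr\theta\ll1$ is needed, in contrast with applications (such as Proposition~\ref{large sieve type inequality}) where that error must be genuinely small. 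It is also worth remarking that the gain over the trivial bound $\tau(b)\ge\lambda_{\chi_0}(b)$, which would only give $\ll(kr)^2$, comes exactly from the cancellation in $\chi_D$ encoded in the factor $\prod_{p<x^{1/kr}}(1-\chi_D(p)/p)^{-1}$ inside $V(x^{1/kr})$, and this is why Hypothesis $\text{H}(c)$ is used.
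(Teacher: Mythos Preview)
Your proof is correct and follows essentially the same approach as the paper's: use positivity of $\lambda_{\chi_0}$ to relax $(b,P(w))=1$ to $(b,P(z^{1/k}))=1$, insert a smooth majorant $\phi$ supported on $[\alpha_1-\eps,\alpha_2+\eps]$ with $\eps<1/(10kr)$, and apply Corollary~\ref{corollary lambda chi with sieve weights} with $r$ replaced by $kr$. The only (harmless) slip is that the error term from that corollary is $O(e^{-s})$ rather than $O((\log x)^{-1/6})$, but since you only need it to be $O(1)$ this does not affect the conclusion.
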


\begin{proof}
Since $w\ge z^{1/k}$, by positivity the left hand side of \eqref{bound for lambda chi0 b over almost primes} is bounded by
\[
\sum_{\substack{x^{\alpha_1}\le b\le x^{\alpha_2} \\ (b,P(z^{1/k}))=1}}\frac{\lambda_{\chi_0}(b)}{b}\phi\Big(\frac{\log b}{\log x}\Big),
\]
where $0\le\phi(u)\le1$ is a smooth function supported on $[\alpha_1-\eps,\alpha_2+\eps]$ with $0<\eps<1/10kr$, and $\phi(u)=1$ when $\alpha_1\le u\le \alpha_2$. We apply Corollary \ref{corollary lambda chi with sieve weights}.
\end{proof}

\begin{proof}[Proof of Lemma \ref{bounding main term difference}]
In the following, we will apply Lemma \ref{upper bound for sum over almost primes} in situations where the variable $b$ always satisfies $b\ge z_{-1/2}$, i.e., $\alpha_1\ge1/\alpha^{1/2}r\ge1/2r$. Therefore we assume throughout that $k\ge1$ is chosen so that the condition \eqref{main term difference sifting variable condition} holds. This ensures that \eqref{condition for sifting variable for b sum} holds any time we apply Lemma \ref{upper bound for sum over almost primes}. 

First we handle $W^+_{1/2}(\chi_0)$. For these terms we have $z_{-1/2}\le p_2\le z_{1/2}$, or
\[
\frac{1}{r}\alpha^{-1/2}\le\frac{\log p_2}{\log x}\le\frac{1}{r}\alpha^{1/2},
\]
as well as $z_{-1/2}\le p_1\le\sqrt{x}$, or
\begin{equation}
\label{range of p1 in W+1/2}
\frac{1}{r}\alpha^{-1/2}\le\frac{\log p_1}{\log x}\le\frac{1}{2}.
\end{equation}
Using $x^{1-\nu}\le p_1p_2b\le x$ and the above bounds, we get
\begin{equation}
\label{range of b in W+1/2}
\frac{1}{2}-\nu-\frac{1}{r}\alpha^{1/2}\le\frac{\log b}{\log x}\le1-\frac{2}{r}\alpha^{-1/2}.
\end{equation}
Assuming that $\alpha\le2$, $r\ge10$, and $\nu\le1/20$, we replace (by positivity) the inequalities \eqref{range of p1 in W+1/2} and \eqref{range of b in W+1/2} by the simpler ones
\[
\frac{1}{2r}\le\frac{\log p_1}{\log x}\le\frac{1}{2}\qquad\text{and}\qquad\frac{1}{4}\le\frac{\log b}{\log x}\le1.
\]
Now applying Lemmas \ref{upper bound for sum over primes} and \ref{upper bound for sum over almost primes}, we get (using $\alpha^{1/2}<2$)
\begin{equation}
\label{initial W12 bound}
W^+_{1/2}(\chi_0)\;\ll\;kr(\log r)\Big(\log\alpha+e^{-c/6r\theta}\Big).
\end{equation}
Next we analyze $W^+_{J+1/2}(\chi_0)$. We have $\alpha^J/r=1/3$, so for these terms we have
\[
\frac{1}{3}\alpha^{-1/2}\le\frac{\log p_2}{\log x}\le\frac{1}{3}\alpha^{1/2}.
\]
The same lower bounds in \eqref{range of p1 in W+1/2} and \eqref{range of b in W+1/2} hold, and hence $p_1p_2b\le x$ gives
\[
\frac{1}{3}\alpha^{-1/2}\quad\le\quad\frac{\log p_1}{\log x}\;,\;\frac{\log b}{\log x}\quad\le\quad1-\frac{2}{3}\alpha^{-1/2}.
\]
We apply Lemmas \ref{upper bound for sum over primes} and \ref{upper bound for sum over almost primes} again, and we find that $W_{J+1/2}^+(\chi_0)$ satisfies the same bound as $W_{1/2}^+(\chi_0)$, 
\begin{equation}
\label{initial WJ bound}
W_{J+1/2}^+(\chi_0)\;\ll\;kr(\log r)\Big(\log\alpha+e^{-c/6r\theta}\Big).
\end{equation}
The differences $W^+_j(\chi_0)-W^-_j(\chi_0)$ are more complicated, but using positivity we can majorize them by two simpler sums,
\[
W^+_j(\chi_0)-W^-_j(\chi_0)\;\le\; U_1+U_2,
\]
where
\[
U_1 \;=\; \sum_{z_{j-1}\le p_1<z_j}\frac{\lambda_{\chi_0}(p_1)}{p_1}\sum_{z_{j-1}\le p_2< z_j}\frac{\lambda_{\chi_0}(p_2)}{p_2}\sum_{\substack{z_{j-1}\le b \le x \\ (b,P(z_{j-1}))=1}}\frac{\lambda_{\chi_0}(b)}{b}f\Big(\frac{\log p_1p_2b}{\log x}\Big)
\]
and
\[
U_2 \;=\; \sum_{z_{j-1}\le p_1<\sqrt{x}}\frac{\lambda_{\chi_0}(p_1)}{p_1}\sum_{z_{j-1}\le p_2< z_j}\frac{\lambda_{\chi_0}(p_2)}{p_2}\sum_{b\in B_j}\frac{\lambda_{\chi_0}(b)}{b}f\Big(\frac{\log p_1p_2b}{\log x}\Big),
\]
and the set $B_j$ is given by the difference of sets
\[
B_j \;=\; \{b\ge z_{j-1}\;;\;(b,P(z_{j-1}))=1\}\;\setminus\;\{b\ge z_{j}\;;\;(b,P(z_{j}))=1\}.
\]
For $U_1$, the condition $p_1p_2b\le x$ implies
\[
\frac{\log b}{\log x}\;\le\;1-\frac{2}{r}\alpha^{j-1}\;\le\;1-\frac{2}{r}\qquad\text{for }1\le j\le J.
\]
Applying Lemmas \ref{upper bound for sum over primes} and \ref{upper bound for sum over almost primes} then gives
\begin{equation}
\label{U1 simplified}
U_1\;\ll\;kr\Big(\log\alpha+e^{-c/3r\theta}\Big)^2.
\end{equation}
For $U_2$, we observe that if $b\in B_j$ and $b<z_j$, then $b$ must be prime, since $z_{j-1}>z_j^{1/2}$ (assuming $\alpha<2$). Otherwise, the elements of $B_j$ are $b=p_3b'$, where $z_{j-1}\le p_3<z_j$, $b'\ge z_{j-1}$, and $(b',P(z_{j-1}))=1$. In other words, 
\begin{align*}
B_j\;\subseteq\;\Big\{b=p_3b'\;;\;z_{j-1}\le p_3<z_j,\;\; &\text{and either }b'=1 \\
&\text{ or }b'\ge z_{j-1}\text{ and }(b',P(z_{j-1}))=1\Big\}.
\end{align*}
Therefore, using $\lambda_{\chi_0}(p_3b')\le\lambda_{\chi_0}(p_3)\lambda_{\chi_0}(b')$, we get
\[
\sum_{b\in B_j}\frac{\lambda_{\chi_0}(b)}{b}\quad\le\quad\Big(\sum_{z_{j-1}\le p_3<z_j}\frac{\lambda_{\chi_0}(p_3)}{p_3}\Big)\Big(1\;\;+\!\!\!\!\sum_{\substack{b'\ge z_{j-1} \\ (b',P(z_{j-1}))=1}}\frac{\lambda_{\chi_0}(b')}{b'}\Big).
\]
Using the condition $p_1p_2p_3b'\le x$, we apply Lemmas \ref{upper bound for sum over primes} and \ref{upper bound for sum over almost primes} to get
\[
U_2\;\ll\;kr(\log r)\Big(\log\alpha+e^{-c/3r\theta}\Big)^2,
\]
which we combine with \eqref{U1 simplified} and sum over $j$ to get
\[
\sum_{1\le j\le J}(W_j^+(\chi_0)-W_j^-(\chi_0))\;\ll\;Jkr(\log r)\Big(\log\alpha+e^{-c/3r\theta}\Big)^2.
\]
Now we make a choice for the parameter $J$: we take
\begin{equation}
\label{choice of J}
J\;=\;\Big[(\log r)e^{c/18r\theta}\Big]+1,
\end{equation}
where $[\;\cdot\;]$ denotes the integer part. This determines $\alpha$ via the relation
\[
\alpha^J\;=\;r/3,\quad\text{or}\quad \log\alpha=J^{-1}\log(r/3),
\]
and hence our choice of $J$ implies
\begin{equation}
\label{bound for log alpha}
\log\alpha\;\ll\;e^{-c/18r\theta}\qquad\text{and}\qquad J\Big(\log\alpha+e^{-c/3r\theta}\Big)\;\ll\;\log r.
\end{equation}
Therefore we have
\[
\sum_{1\le j\le J}(W_j^+(\chi_0)-W_j^-(\chi_0))\;\ll\;kr(\log r)^2e^{-c/18r\theta}.
\]
Similarly, from \eqref{initial W12 bound} and \eqref{initial WJ bound} and the bound \eqref{bound for log alpha}, we see that both $W_{1/2}^+(\chi_0)$ and $W_{J+1/2}^+(\chi_0)$ satisfy the same bound. This gives the result. 
\end{proof}

\subsection{The contribution of the other characters}
\label{Evaluating V}

In this section we estimate the contributions of the nonprincipal characters $\chi\ne\chi_0$ to the lower and upper bounds \eqref{lemma V into characters lower} and \eqref{lemma V into characters upper}.

\begin{lemma}
\label{bounding nonprincipal character contributions}
Let $k\ge1$ be such that \eqref{main term difference sifting variable condition} holds and $kr\theta\ll1$. Then
\begin{align*}
\frac{1}{h}\sum_{j}\sum_{\chi\ne\chi_0}\con{\chi}(\mc{C})W^\pm_j(\chi)\;\ll\;\frac{1}{h}k^2r^6\nu^{-7/2}e^{-5c/18r\theta},
\end{align*}
where the $j$-sum runs over $1\le j\le J$ for $W^-$ and $\frac{1}{2}\le j\le J+\frac{1}{2}$ for $W^+$. 
\end{lemma}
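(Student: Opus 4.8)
The plan is to adapt the multiplicative ternary strategy of Section~\ref{sec:outline}: decouple the three variables $p_1,b,p_2$ by Fourier-expanding $f$, then apply Cauchy--Schwarz over $\chi\in\wh{\mc H}$, grouping the $p_1$- and $b$-sums together (to be controlled by the large sieve inequality, Proposition~\ref{large sieve type inequality}) and isolating the $p_2$-sum (to be controlled by the explicit formula, Proposition~\ref{proposition explicit formula in log scale}, together with the zero-free region of Proposition~\ref{prop hyp HC for class group l func}). Concretely, since $f$ is supported on $[1-\nu,1]$, every term of $W_j^\pm(\chi)$ has $p_1p_2b\le x$, so one may insert for free a fixed smooth weight $\psi(\log b/\log x)$ with $0\le\psi\le1$, $\psi\equiv1$ on $[0,1]$ and $\operatorname{supp}\psi\subseteq[-\tfrac1{10},2]$, which renders the $b$-summation finite. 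Writing $f(u)=\int_\R\wh f(\xi)\e(\xi u)\,d\xi$ and separating variables gives
\[
W_j^\pm(\chi)=\int_\R\wh f(\xi)\,P_j^\pm(\chi,\xi)\,B_j^\pm(\chi,\xi)\,Q_j(\chi,\xi)\,d\xi,
\]
where $P_j^\pm(\chi,\xi)=\sum_{p_1}p_1^{-1+2\pi i\xi/\log x}\lambda_\chi(p_1)$ runs over $z_j<p_1<\sqrt x$ (with $z_{j-1}$ for the $+$ case), $B_j^\pm(\chi,\xi)=\sum_b b^{-1+2\pi i\xi/\log x}\lambda_\chi(b)\psi(\log b/\log x)$ runs over $b\ge z_j$ with $(b,P(z_j))=1$ (again $z_{j-1}$ for $+$), and $Q_j(\chi,\xi)=\sum_{p_2}p_2^{-1+2\pi i\xi/\log x}\lambda_\chi(p_2)h_j(\log p_2/\log x)$. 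By Cauchy--Schwarz in $\chi$, for each fixed $\xi$,
\[
\Big|\sum_{\chi\ne\chi_0}\con{\chi}(\mc C)\,P_j^\pm B_j^\pm Q_j\Big|\le\max_{\chi\ne\chi_0}|Q_j(\chi,\xi)|\cdot\Big(\sum_\chi|P_j^\pm(\chi,\xi)|^2\Big)^{1/2}\Big(\sum_\chi|B_j^\pm(\chi,\xi)|^2\Big)^{1/2}.
\]

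For the two $L^2$-sums I would apply Proposition~\ref{large sieve type inequality} with $r$ replaced by $kr$, so that the sieving level becomes $x^{1/kr}$. This is exactly the role of the auxiliary parameter $k$: the primes $p_1$ and the integers $b$ can be as small as $x^{1/2r}$, which is too small for the large sieve at the coarser level $x^{1/r}$; but since $p_1$ is prime with $p_1>z_j\ge x^{1/2r}$ and every prime factor of $b$ exceeds $z_j\ge x^{1/2r}$, the coefficients are supported on integers coprime to $P(x^{1/kr})$, are bounded by $1$ in modulus, and lie in $[x^{1/2r},x^2]$; the hypotheses $(\tfrac1{2r}-\tfrac32\theta)kr\ge5$ and $kr\theta\ll1$ are precisely \eqref{main term difference sifting variable condition} and the standing assumption. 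This yields $\sum_\chi|P_j^\pm(\chi,\xi)|^2\ll(kr)^2$ and $\sum_\chi|B_j^\pm(\chi,\xi)|^2\ll(kr)^2$, uniformly in $\xi$ and $j$.

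For $\max_{\chi\ne\chi_0}|Q_j(\chi,\xi)|$ I would apply Proposition~\ref{proposition explicit formula in log scale} with $\phi=\phi_\xi$, $\phi_\xi(u)=h_j(u)\e(\xi u)$ (the hypothesis $\theta<2\alpha_1-\alpha_2$ holds since $\theta=c/r^2$ and $\alpha$ is close to $1$); as $\chi\ne\chi_0$ the $\delta(\chi)$-term drops, leaving $Q_j(\chi,\xi)=-\sum_\rho\wt\Phi_\xi(\rho)+O(1/h\log x)$ with $\wt\Phi_\xi(\rho)=\int u^{-1}h_j(u)\e(\xi u)\,x^{u(\rho-1)}\,du$, the integral over $\operatorname{supp}h_j$. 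The decisive feature --- the localization highlighted in Section~\ref{sec:outline} --- is that $h_j$ is supported at $u\gtrsim1/r$, so for a zero $\rho=\beta+i\gamma$ we have $|x^{u(\rho-1)}|=x^{-u(1-\beta)}\le e^{-cu/\theta}$ for $|\gamma|\le1$ by Hypothesis~$\text{H}(c)$ and Proposition~\ref{prop hyp HC for class group l func}, and $\le e^{-c'u/\theta}$ for $1<|\gamma|\le D^{O(1)}$ by the classical zero-free region for $L_K(s,\chi)$; zeros with larger $|\gamma|$ and zeros with $\beta$ bounded away from $1$ contribute negligibly after a couple of integrations by parts against the oscillation $\e((\tfrac{\gamma\log x}{2\pi}+\xi)u)$, using the standard estimate $\sum_\rho(1+|\gamma|)^{-2}\ll\log D$. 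This gives a bound of the shape $\max_{\chi\ne\chi_0}|Q_j(\chi,\xi)|\ll\ell_j\cdot\mathrm{poly}(r,|\xi|,1/\log\alpha)\cdot e^{-c''\alpha^{j-1}/r\theta}$, where $\ell_j\asymp\alpha^{j-1}\log\alpha/r=|\operatorname{supp}h_j|$.

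It then remains to integrate over $\xi$ against $|\wh f(\xi)|\ll_m\nu(1+\nu|\xi|)^{-m}$ (turning each power of $|\xi|$ into a power of $\nu^{-1}$) and to sum over $j$, invoking the choice $J=[(\log r)e^{c/18r\theta}]+1$ of \eqref{choice of J} so that $\log\alpha\ll e^{-c/18r\theta}$ and $\sum_j\ell_j\ll1$; collecting the polynomial factors (a bounded power of $r$, $k$, and $\nu^{-1}$, which the bookkeeping pins down as $k^2r^6\nu^{-7/2}$) and the exponential savings into $e^{-5c/18r\theta}$ then produces the asserted bound, the $1/h$ being already present. I expect the main obstacle to be the estimate for $Q_j(\chi,\xi)$: the test function $h_j\e(\xi u)$ is supported on a very short interval, of length $\asymp e^{-c/18r\theta}/r$, so each integration by parts costs a factor $\asymp r/\log\alpha\asymp re^{c/18r\theta}$, and one must choose their number so that, after cancelling against the decay $e^{-cu/\theta}\le e^{-c/r\theta}$ coming from the zero-free region, the net exponent comes out as $5c/18r\theta$ rather than something weaker; one must also track the dependence of the error term in Proposition~\ref{proposition explicit formula in log scale} on the now $\xi$- and $j$-dependent test function, which is where the precise powers of $\nu^{-1}$ in the final bound originate.
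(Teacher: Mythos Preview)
Your proposal is correct and follows essentially the same route as the paper: Fourier-expand $f$ to decouple the three variables, pull out $\max_{\chi\ne\chi_0}$ of the $p_2$-sum and handle it by the explicit formula together with Hypothesis~$\text{H}(c)$ and integration by parts, then bound the remaining $p_1$- and $b$-sums by Cauchy--Schwarz in $\chi$ and the large-sieve inequality (Proposition~\ref{large sieve type inequality}) at level $x^{1/kr}$, and finally integrate in $\xi$ and sum over $j$ using the choice of~$J$.

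Two small points where the paper differs in execution. First, the paper applies the explicit formula \emph{before} Fourier inversion, with the test function $\phi(u)=h_j(u)\,f(u+\log p_1/\log x+\log b/\log x)$; only afterwards is $f$ Fourier-expanded inside the resulting integral~$\wt\Phi(\rho)$. This has the advantage that the error term $O(1/h\log x)$ from Proposition~\ref{proposition explicit formula in log scale} is uniform (no $\xi$-dependence to track), which you correctly flag as a nuisance in your ordering. Second, for the zeros with $|\gamma_\chi|>1$ the paper uses no zero-free region at all: three integrations by parts give decay $|1+(1-\rho)\log x|^{-3}$, and the standard count $\#\{\rho:k<|\gamma_\chi|\le k+1\}\ll\log(Dk)$ makes $\sum_{|\gamma_\chi|>1}$ converge to something $\ll(\log x)^{-2}$, which is swallowed by the bound for the low-lying zeros. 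Your invocation of a classical zero-free region in the intermediate range $1<|\gamma|\le D^{O(1)}$ is unnecessary (and the bound $e^{-c'u/\theta}$ there is not quite uniform in $|\gamma|$ anyway). For $|\gamma_\chi|\le1$ the paper controls the number of zeros via Lemma~\ref{zero density surrogate estimate}, which is what makes the sum $\sum_{|\gamma_\chi|\le1}|H(\rho_\chi,w)|$ bounded rather than merely each term; you should invoke that lemma explicitly rather than only $\sum_\rho(1+|\gamma|)^{-2}\ll\log D$.
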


\begin{proof}
We treat both $W^\pm$ at the same time because our arguments do not depend on the specific ranges of the variables $p_1$ and $b$, only that the inequalities
\[
p_1,b\;\ge\; z_{-1/2}
\]
hold for every $j$. First, we evaluate the sum over $p_2$ using the explicit formula. Specifically, we apply Proposition \ref{proposition explicit formula in log scale} with the choice
\[
\phi(u)\;=\;h_j(u)f\Big(u+\frac{\log p_1}{\log x}+\frac{\log b}{\log x}\Big),
\]
which gives us
\[
W^\pm_j(\chi)\;=\sum_{p_1}\frac{\lambda_{\chi}(p_1)}{p_1}\sum_{b}\frac{\lambda_{\chi}(b)}{b}\;\sum_{\rho_\chi}\wt{\Phi}(\rho_\chi)\;+\;O\Big(\frac{1}{h\log x}\Big),
\]
where
\[
\wt{\Phi}(s)\;=\;\int_{-\infty}^\infty x^{u(s-1)}u^{-1}h_j(u)f\Big(u+\frac{\log p_1}{\log x}+\frac{\log b}{\log x}\Big)\diff u
\]
and the sum $\sum_{\rho_\chi}$ runs over the nontrivial zeros of $L_K(s,\chi)$. (We don't write the conditions for $p_1,b$ explicitly, since they differ for $W^\pm$, but of course we keep them in our minds as necessary.) Note that there is no polar contribution as in \eqref{explicit formula in log scale} since here we treat the nonprincipal characters $\chi\ne\chi_0$. We decouple the variables $p_1,b$ from the above integral by applying the Fourier inversion
\[
f(u+\delta)=\int_{-\infty}^\infty\wh{f}(w)\e((u+\delta)w)\diff w,
\]
which gives us
\begin{align*}
W^\pm_j(\chi)\;=\int_{-\infty}^\infty\wh{f}(w)\sum_{p_1}&\e\Big(w\frac{\log p_1}{\log x}\Big)\frac{\lambda_{\chi}(p_1)}{p_1} \\
&\sum_{b}\e\Big(w\frac{\log b}{\log x}\Big)\frac{\lambda_{\chi}(b)}{b}\sum_{\rho_\chi}H(\rho_\chi,w)\diff w\;+\;O\Big(\frac{1}{h\log x}\Big),
\end{align*}
where
\[
H(s,w)\;=\;\int_{-\infty}^\infty x^{u(s-1)}u^{-1}h_j(u)\e(wu)\diff u.
\]
Summing over $\chi\ne\chi_0$ and taking the absolute value, we get
\[
\Big|\sum_{\chi\ne\chi_0}\con{\chi}(\mc{C})W^\pm_j(\chi)\Big|\;\le\;\int_{-\infty}^\infty|\wh{f}(w)| H(w) K(w)\diff w,
\]
where we have put
\begin{equation}
\label{H(w) sum over zeros}
H(w)\;=\;\max_{\chi\ne\chi_0}\Big|\sum_{\rho_\chi}H(\rho_\chi,w)\Big|
\end{equation}
and
\begin{equation}
\label{K(w) definition}
K(w)\;=\;\sum_{\chi\ne\chi_0}\Big|\sum_{p_1}\e\Big(w\frac{\log p_1}{\log x}\Big)\frac{\lambda_{\chi}(p_1)}{p_1}\Big|\Big|\sum_{b}\e\Big(w\frac{\log b}{\log x}\Big)\frac{\lambda_{\chi}(b)}{b}\Big|.
\end{equation}
To estimate $H(w)$, we integrate by parts three times (after borrowing/returning a factor $e^u$) to get
\[
H(s,w)\;=\;(1+(1-s)\log x)^{-3}\int_{-\infty}^\infty e^{-u}x^{u(s-1)}(e^uu^{-1}h_j(u)\e(wu))'''\diff u.
\]
For $k=1,2,3$ we have
\[
h_j^{(k)}(u) \;\ll\; \Big(\frac{\alpha^j}{r}-\frac{\alpha^{j-1}}{r}\Big)^{-k}\;\ll\; (r/\alpha^j\log\alpha)^k\;\le\;(r/\log\alpha)^k
\]
for every $j$, with $\alpha^{j-1}/r\le u\le\alpha^j/r$, and thus we get
\[
(e^uu^{-1}h_j(u)\e(wu))'''\;\ll\;e^u(r/\log\alpha)^3(1+|w|^3),
\]
which holds for all $w$. Using this to estimate the above integral, we derive
\[
H(s,w)\;\ll\;|1+(1-s)\log x|^{-3}x^{(\sigma-1)/2r}(r/\log\alpha)^3(1+|w|^3).
\]
We use this bound to estimate \eqref{H(w) sum over zeros}. The number of zeros $\rho_\chi=\beta_\chi+i\gamma_\chi$ of $L_K(s,\chi)$ with $0<\beta_\chi<1$ and $t<|\gamma_\chi|\le t+1$ is $O(\log D(|t|+1))$, so the above bound implies
\begin{align*}
\sum_{k\ge1}\sum_{k<|\gamma_\chi|\le k+1}|H(\rho_\chi,w)|\;&\ll\;(r/\log\alpha)^3(1+|w|^3)\sum_{k\ge1}(\log Dk)k^{-3}(\log x)^{-3} \\
&\ll\;(r/\log\alpha)^3(1+|w|^3)(\log x)^{-2}.
\end{align*}
By Proposition \ref{prop hyp HC for class group l func}, the remaining zeros $\rho_\chi$ of $L_K(s,\chi)$ fall in the range of Hypothesis $\text{H}(c)$, so they satisfy
\[
\beta_\chi\;\le\;1-\frac{c}{\log D},
\]
and hence $x^{(1-\beta_\chi)/2r}\le e^{-c/2r\theta}$. Applying Lemma \ref{zero density surrogate estimate} now gives
\begin{equation}
\label{bound for H(rho,w) on low zeros}
\sum_{\substack{\rho_\chi=\beta_\chi+i\gamma_\chi \\ |\gamma_\chi|\le1}}|H(\rho_\chi,w)|\;\ll\;(r/\log\alpha)^3e^{-c/2r\theta}(1+|w|^3).
\end{equation}
This bound covers the one above for $|\gamma_\chi|>1$, and it is uniform in $\chi$, so we deduce that $H(w)$ is also bounded by the right-hand side of \eqref{bound for H(rho,w) on low zeros}.

For $K(w)$, we apply the Cauchy inequality to the $\chi$-sum in \eqref{K(w) definition} to get
\begin{align}
\label{K(w) after Cauchy}
K(w)\;\le\;\Big(\sum_\chi\Big|\sum_{p_1}&\e\Big(w\frac{\log p_1}{\log x}\Big)\frac{\lambda_{\chi}(p_1)}{p_1}\Big|^2\Big)^{1/2} \\
&\cdot\Big(\sum_\chi\Big|\sum_{b}\e\Big(w\frac{\log b}{\log x}\Big)\frac{\lambda_{\chi}(b)}{b}\Big|^2\Big)^{1/2}. \nonumber
\end{align}
Here we have (by positivity) added back the principal character to the $\chi$-sums. We apply Proposition \ref{large sieve type inequality}: with $k\ge1$ chosen so that \eqref{main term difference sifting variable condition} holds, we have
\[
K(w)\;\le\;\Big(\sum_\chi\Big|\!\!\!\!\!\!\sum_{\substack{m\ge1 \\ (m,P(x^{1/kr}))=1}}\!\!\!\!\!\!c_m\frac{\lambda_{\chi}(m)}{m}\Big|^2\Big)^{1/2}\Big(\sum_\chi\Big|\!\!\!\!\!\!\sum_{\substack{n\ge1 \\ (n,P(x^{1/kr}))=1}}\!\!\!\!\!\!c_n'\frac{\lambda_{\chi}(n)}{n}\Big|^2\Big)^{1/2},
\]
where we choose the coefficients $c_m,c_n'$ to agree with those of $p_1,b$ in \eqref{K(w) after Cauchy} when $m=p_1,n=b$, and we choose them to be 0 otherwise. This gives
\[
K(w)\;\ll\;k^2r^2.
\]
For the integration over $w$, we have
\begin{align*}
\int_{-\infty}^\infty|\wh{f}(w)|(1&+|w|^3)\diff w \\
&\le\; \Big(\int_{-\infty}^\infty|\wh{f}(w)|^2(1+|w|^4)^2\diff w\Big)^{1/2}\Big(\int_{-\infty}^\infty\Big(\frac{1+|w|^3}{1+|w|^4}\Big)^2\diff w\Big)^{1/2} \\
&\ll\;\Big(\int_{-\infty}^\infty(|f(u)|^2+|f^{(2)}(u)|^2+|f^{(4)}(u)|^2)\diff u\Big)^{1/2}.
\end{align*}
The derivatives $f^{(k)}(u)$ are supported on $1-\nu\le u\le1$ and satisfy
\[
\max_u|f^{(k)}(u)|\;\ll\;\nu^{-k},
\]
and hence we derive
\[
\int_{-\infty}^\infty|\wh{f}(w)|(1+|w|^3)\diff w\;\ll\;\nu^{-7/2}.
\]
Therefore
\[
\int_{-\infty}^\infty|\wh{f}(w)| H(w) K(w)\diff w\;\ll\;k^2r^2(r/\log\alpha)^3\nu^{-7/2}e^{-c/2r\theta}.
\]
Finally we sum over $j\ll J=\log(r/3)/\log\alpha$, which gives
\[
\sum_{j}\sum_{\chi\ne\chi_0}\con{\chi}(\mc{C})W^\pm_j(\chi)\;\ll\;k^2r^6(\log\alpha)^{-4}\nu^{-7/2}e^{-c/2r\theta}.
\]
From \eqref{choice of J} we have $(\log\alpha)^{-4}\ll e^{2/9r\theta}$, which completes the proof.
\end{proof}

\appendix

\section{Approximating $L$-functions by finite Euler products}
\label{section euler products}

Let $L(s,f)$ be an entire $L$-function of degree $d\ge1$, where we think of $f$ as some interesting arithmetic object to which $L(s,f)$ is attached. It is natural to try to approximate $L(1,f)$ by a partial Euler product. This question and similar ones have been addressed by many works in the literature---see for instance \cite{granville2003distribution}, \cite{granville2005extreme}, \cite{gonek2007hybrid}, \cite{bui2007mean}, \cite{bui2008mean}, and \cite{gonek2012finite}, where such approximations are both developed and used for interesting arithmetic applications.

The main result in this section, Proposition \ref{proposition approximating L function by Euler product}, would follow from results in the cited works above (for $L(s,\chi_D)$, at least, from a slight modification of the results in \cite{bui2007mean}) after using an appropriate zero-density estimate for $L(s,f)$. However, we wish to give here a self-contained proof of the result that does not rely on any zero-density estimates. 

We assume that $L(s,f)$ is given by a Dirichlet series and Euler product,
\begin{align*}
L(s,f)&\;=\;\sum_{n\ge1}\lambda_f(n)n^{-s}\;=\;\prod_p L_p(s,f), \\
L_p(s,f) &\;=\; \prod_{1\le j\le d}(1-\alpha_j(p)p^{-s})^{-1},
\end{align*}
if $\sigma=\Re s>1$, where $|\alpha_j(p)|\le1$. Further we assume that $L(s,f)$ is entire and that it satisfies a functional equation of conductor $\Delta\ge3$, where 
\[
\gamma(s,f)\;=\;\pi^{-ds/2}\prod_{1\le j\le d}\Gamma\Big(\frac{s+\kappa_j}{2}\Big),\qquad \Re \kappa_j\ge0,
\]
is its gamma factor. This means (see Chapter 5 of \cite{iwaniec2004analytic})
\[
\Lambda(s,f)\;=\;\Delta^{s/2}\gamma(s,f)L(s,f)\;=\;\eps\Lambda(1-s,\con{f})
\]
where $\eps$ denotes the root number, $|\eps|=1$. 

Our goal is to approximate $L(1,f)$ by the finite product
\begin{equation}
\label{definition of E(x)}
E(x)\;=\;\prod_{p< x}L_p(1,f)
\end{equation}
when $x$ comparable to $\Delta$ in the logarithmic scale. Indeed, assuming the Riemann hypothesis for $L(s,f)$ shows that
\[
L(1,f)\;\sim\; E(x)\qquad\text{if }\;x\;\gg\;(\Delta\log\Delta)^2
\]
as $\Delta\to\infty$. By comparison, our result will be unconditional. We do not require any zero-density estimate for $L(s,f)$, only that it have a zero-free region of ``classical'' type; that is, we assume that Hypothesis $\text{H}(c)$ holds for $L(s,f)$.

\begin{proposition}
\label{proposition approximating L function by Euler product}
Suppose $L(s,f)$ is entire and that it satisfies Hypothesis $\text{H}(c)$. Let $\Delta=x^\theta$ for some $0<\theta<1$. Then
\begin{equation}
\label{approximating L function by Euler product}
L(1,f)\;=\;E(x)e^{\eta(x)},
\end{equation}
where
\begin{equation}
\label{explicit bound on eta}
\eta(x) \;\ll\; \exp\Big(\frac{-c}{3\theta}\Big) \;+\;\frac{1}{\log x}.
\end{equation}
The implied constant above depends on the degree $d$ and parameters $\kappa_1,\dots,\kappa_d$. 
\end{proposition}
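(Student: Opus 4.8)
The plan is to pass from the finite Euler product to $L(1,f)$ by a Mellin contour shift across the line $\Re s=1$, using Hypothesis $\text{H}(c)$ to clear the zeros near the real axis and using the rapid vertical decay of a smooth Mellin kernel to control the zeros of large imaginary part, about which $\text{H}(c)$ says nothing.

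First I would set up the normalizations. Under $\text{H}(c)$ and the classical non‑vanishing of $L(s,f)$ for $\Re s\ge1$, the function $L(s,f)$ is zero‑free in $\{\,\Re s\ge1-c_1,\ |\Im s|\le1\,\}$ with $c_1:=c/(3\log\Delta)$, so $L(1,f)\ne0$ and $\log L(s,f)$ is holomorphic in a neighbourhood of $[1-c_1,\infty)$; fix the branch by $\log L(+\infty,f)=0$ and set $\eta(x):=\log L(1,f)-\log E(x)$ with the same branch, so that \eqref{approximating L function by Euler product} is equivalent to $\eta(x)\ll e^{-c/3\theta}+1/\log x$. A routine reduction then writes $\log E(x)$ as a Mellin integral: take a fixed smooth $K$ with $K=1$ on $(0,x]$ and $K=0$ on $[x^{1+\delta},\infty)$ ($\delta$ a small absolute constant), so that $\widetilde K(w)=\int_0^\infty K(t)t^{w-1}\,dt=-w^{-1}\!\int_x^{x^{1+\delta}}\!K'(t)t^{w}\,dt$ is meromorphic with a single simple pole at $w=0$, of residue $1$, and $\widetilde K(w)\ll x^{\Re w}(1+|w|)^{-M}$ on vertical lines for every $M$. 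Since $\log L(1+w,f)=\sum_n\Lambda_f(n)n^{-1-w}/\log n$ for $\Re w>0$, Mellin inversion gives $\tfrac{1}{2\pi i}\int_{(2)}\log L(1+w,f)\widetilde K(w)\,dw=\sum_n\tfrac{\Lambda_f(n)}{n\log n}K(n)$, which differs from $\log E(x)=\sum_{n=p^k,\,p<x}\tfrac{\Lambda_f(n)}{n\log n}$ only by prime powers $p^k$ with $k\ge2$ (trivially $O_d(x^{-1/2})$) and by the smooth prime sum $\sum_{x\le p\le x^{1+\delta}}\tfrac{\lambda_f(p)}{p}K(p)$ over the transition range; this last sum I would bound directly by the argument of Proposition \ref{proposition explicit formula in log scale}, which applies verbatim to any entire $L(s,f)$ with a functional equation and whose dual sum over zeros is $\ll e^{-c/\theta}+1/\log x$ by $\text{H}(c)$ together with the decay of the transform.

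Next I would perform the shift. Moving the line from $\Re w=2$ to $\Re w=-c_1$ crosses only the pole of $\widetilde K$ at $w=0$, whose residue is $\log L(1,f)\cdot\res_{w=0}\widetilde K(w)=\log L(1,f)$; hence $\eta(x)=-\tfrac{1}{2\pi i}\int_{(-c_1)}\log L(1+w,f)\widetilde K(w)\,dw+O(e^{-c/\theta}+1/\log x)$. The shift is legitimate for $|\Im w|\le1$ by $\text{H}(c)$; for $1<|\Im w|\le T:=(\log x)^{2}$ one indents around the $O(T\log(\Delta T))$ possible zeros of $L$ with $\Re s=1-c_1$, whose total contribution is negligible because $\widetilde K(w)\ll(\log x)^{-M}$ there, and the tail $|\Im w|>T$ is negligible by the same decay. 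On $\Re w=-c_1$ one has $|\widetilde K(w)|\ll x^{-c_1}\min\{|w|^{-1},(1+|w|)^{-M}\}$ with $x^{-c_1}=e^{-c/3\theta}$, so the shifted integral is $\ll e^{-c/3\theta}$, the part with $w$ near $0$ being controlled by inserting the standard bound for $|\log L(s,f)|$ just to the left of the $1$‑line (available in the zero‑free strip supplied by $\text{H}(c)$) and pairing it with the $L^{1}$‑mass of $\widetilde K$. Collecting the three error terms gives $\eta(x)\ll e^{-c/3\theta}+1/\log x$, and exponentiating yields \eqref{approximating L function by Euler product}–\eqref{explicit bound on eta}. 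The main obstacle is exactly that $\text{H}(c)$ pins down only the low‑lying zeros, while the contour $\Re w=-c_1$ could a priori run close to zeros of higher imaginary part; classically one would invoke a zero‑free region valid for all $t$, or a zero‑density estimate, but here that input is replaced — at essentially no cost — by the super‑polynomial vertical decay of $\widetilde K$, which lets us truncate at $|\Im w|=(\log x)^{2}$ with error $O(1/\log x)$ and absorb the boundedly many remaining zeros into that same error. A secondary, cosmetic nuisance is the logarithmic singularity at $t=1$ of the naive test function $t^{-1}(\log t)^{-1}\phi(\log t/\log x)$ one would use to capture $\sum_{p<x}$ directly; keeping the weight $1/\log n$ inside the Mellin integral reinterprets it as the harmless simple pole of $\widetilde K$ at $w=0$, whose residue is precisely the quantity $\log L(1,f)$ we want.
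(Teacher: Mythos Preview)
Your approach is genuinely different from the paper's, and it has a real gap. The paper does not shift a contour for $\log L(1+w,f)$; instead it works with the explicit formula for $-L'/L$, writing $\eta(x)$ (up to $O(\eps+1/\log x)$) as a smoothed tail $K_\phi(x)=\sum_n\Lambda_f(n)(n\log n)^{-1}\phi(\log n/\log x)$ and then as $-\sum_\rho\tilde\Phi(\rho)$ over nontrivial zeros. The zeros with $|\gamma|>1$ are disposed of by the decay of $\tilde\Phi$ and crude zero-counting; the low zeros $|\gamma|\le1$ are handled by Hypothesis $\text{H}(c)$ together with Lemma~\ref{zero density surrogate estimate}, which bounds $\sum_\rho(1+(1-\beta)\log x)^{-1}(1+(\gamma\log x)^2)^{-1}$ by $d+\theta/2+O(1/\log x)$ --- a pointwise, density-free substitute for a zero-density theorem. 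Optimizing the smoothing width $\eps$ against $\eps^{-2}e^{-c/\theta}$ then produces the factor $e^{-c/3\theta}$.

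Your contour shift of $\int\log L(1+w,f)\,\tilde K(w)\,dw$ runs into two problems. First, the zeros of $L(s,f)$ with $|\gamma|>1$ are \emph{branch points} of $\log L$, not poles, so one cannot simply ``indent around'' them and declare the contribution negligible; you would have to lay down branch cuts and account for the resulting keyhole integrals. This is repairable, but it is not what you wrote. Second --- and this is the fatal issue --- your estimate of the integral on $\Re w=-c_1$ is too optimistic. Near $w=0$ one has only $|\tilde K(-c_1+it)|\ll x^{-c_1}\,|{-c_1+it}|^{-1}$, so the $L^1$-mass there is $\int_{|t|\le1}|\tilde K|\,dt\asymp e^{-c/3\theta}\log(1/c_1)\asymp e^{-c/3\theta}\log\log\Delta$. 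Moreover the best uniform bound for $|\log L(1-c_1+it,f)|$ in the zero-free strip is $\ll\log\log\Delta$ (integrate $|L'/L|\ll\log\Delta$ over a segment of length $\asymp1/\log\Delta$ starting from $\sigma=1+1/\log\Delta$, where already $|\log L|\ll\log\log\Delta$). Hence the shifted integral is only $\ll e^{-c/3\theta}(\log\log\Delta)^2$, not $\ll e^{-c/3\theta}$; for $\theta$ bounded away from $0$ this grows with $x$ and the proposition is not recovered. The paper's use of $L'/L$ (poles rather than branch points) in tandem with Lemma~\ref{zero density surrogate estimate} is exactly what avoids these logarithmic losses.
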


\begin{remark}
A version of \eqref{explicit bound on eta} with explicit numerical constants is given in \cite{gaudet2023least}.
\end{remark}

In this article, we use the above result only for the quadratic Dirichlet $L$-function $L(s,\chi_D)$. In this case the result reads: assuming Hypothesis $\text{H}(c)$ holds for $L(s,\chi_D)$, then for $z>D$ we have
\begin{equation}
\label{approximation to L1chiD}
\prod_{p<z}\Big(1-\frac{\chi_D(p)}{p}\Big)\;=\;L(1,\chi_D)^{-1}\Big\{1+O\Big(\exp\Big(\frac{-c\log z}{3\log D}\Big)+\frac{1}{\log z}\Big)\Big\}.
\end{equation}
Before giving the proof of Proposition \ref{proposition approximating L function by Euler product}, we give a corollary that we need at other points in the work.

\begin{corollary}
\label{corollary summation for chiD}
Assume Hypothesis $\text{H}(c)$ holds for $L(s,\chi_D)$, and that $D=x^\theta$ where $0<\theta<1$.  Let $0<\alpha_1<\alpha_2<1$.  Then for $x$ sufficiently large we have
\begin{equation}
\label{summation for chiD}
\sum_{x^{\alpha_1}\le p<x^{\alpha_2}}\frac{\chi_D(p)}{p} \;\ll\; \exp\Big(\frac{-c\alpha_1}{3\theta}\Big).
\end{equation}
If $\phi$ is a smooth function supported on $[\alpha_1,\alpha_2]$, then we have
\begin{equation}
\label{smooth summation for chiD}
\sum_p\frac{\chi_D(p)}{p}\phi\Big(\frac{\log p}{\log x}\Big)\;\ll\;(\alpha_2-\alpha_1)^{-2}\exp\Big(\frac{-c\alpha_1}{\theta}\Big).
\end{equation}
\end{corollary}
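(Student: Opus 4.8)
The plan is to read both bounds off analytic inputs that are at our disposal because Hypothesis $\text{H}(c)$ is assumed for $L(s,\chi_D)$: for \eqref{summation for chiD} the finite-Euler-product approximation \eqref{approximation to L1chiD}, and for \eqref{smooth summation for chiD} the explicit formula for $L(s,\chi_D)$ together with the fact that $\text{H}(c)$ forces a classical zero-free region and no Siegel zero (cf.\ Proposition \ref{prop hyp HC for class group l func} and the proof of Proposition \ref{proposition approximating L function by Euler product}).

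For \eqref{summation for chiD}: assume $\theta<\alpha_1<\alpha_2$ (the only case needed in the sequel; the complementary range $\alpha_1\le\theta$ has $\exp(-c\alpha_1/3\theta)$ of constant size and follows from the trivial bound $\sum_{x^{\alpha_1}\le p<x^{\alpha_2}}1/p\ll\log(\alpha_2/\alpha_1)$). Then $x^{\alpha_1}>D$, so \eqref{approximation to L1chiD} applies at both $z=x^{\alpha_2}$ and $z=x^{\alpha_1}$; dividing the two relations — the error at $z=x^{\alpha_1}$ dominating the one at $z=x^{\alpha_2}$ — gives
\[
\prod_{x^{\alpha_1}\le p<x^{\alpha_2}}\Big(1-\frac{\chi_D(p)}{p}\Big)\;=\;1+O\Big(\exp\Big(\frac{-c\alpha_1}{3\theta}\Big)+\frac{1}{\alpha_1\log x}\Big).
\]
Taking logarithms, using $-\log(1-\chi_D(p)/p)=\chi_D(p)/p+O(p^{-2})$ and $\sum_{p\ge x^{\alpha_1}}p^{-2}\ll x^{-\alpha_1}$, turns this into \eqref{summation for chiD}; for $x$ large (with $\theta$ fixed) the term $1/(\alpha_1\log x)$ is absorbed by $\exp(-c\alpha_1/3\theta)$.

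The sharper exponent in \eqref{smooth summation for chiD} rules out deducing it from \eqref{summation for chiD} by partial summation, so instead I would apply the explicit formula for $L(s,\chi_D)$ — a standard variant of Proposition \ref{proposition explicit formula in log scale}, or its consequence upon subtracting the prime number theorem from the $\chi_0$ case — to the test function $\Phi(t)=\tfrac{1}{t\log t}\phi(\log t/\log x)$ from \eqref{from phi to Phi}. Since $\chi_D$ is non-principal there is no polar term, and prime powers are trivially negligible because $|\chi_D(p)^k|\le1$; one is left with
\[
\sum_p\frac{\chi_D(p)}{p}\phi\Big(\frac{\log p}{\log x}\Big)\;=\;-\sum_{\rho}\wt{\Phi}(\rho)\;+\;O\Big(\frac{1}{\log x}\Big),
\]
where $\rho=\beta+i\gamma$ runs over the nontrivial zeros of $L(s,\chi_D)$ and $\wt{\Phi}(s)=\int\phi(u)u^{-1}x^{u(s-1)}\diff u$. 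Two features of $\wt{\Phi}$ do the work: on any line with $\Re s<1$ it carries the factor $\exp(\alpha_1(\Re s-1)\log x)$ (because $\phi$ is supported in $[\alpha_1,\alpha_2]$), and integration by parts in $u$ shows it decays like $(1+|(\Im s)\log x|)^{-2}$ (and faster with more passes), with an accompanying power of $(\alpha_2-\alpha_1)^{-1}$. For the zeros with $|\gamma|\le1$, Hypothesis $\text{H}(c)$ gives $\beta\le1-c/\log D$, hence $\exp(\alpha_1(\beta-1)\log x)\le\exp(-c\alpha_1/\theta)$; summing the decay bound over these zeros against the classical count $O(\log D(|\gamma|+1))$ of zeros in a unit window, and noting that $\log D=\theta\log x$ while $\wt{\Phi}$ already decays on the scale $1/\log x$, the resulting series converges \emph{without} producing a spurious factor of $\log D$, leaving a bound of the shape $(\alpha_2-\alpha_1)^{-2}\exp(-c\alpha_1/\theta)$. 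This is exactly the phenomenon flagged in Section \ref{sec:outline}: the long smooth sum over $p$ localizes the dual sum over zeros to the scale of the classical zero-free region.

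The main obstacle is the zeros with $|\gamma|>1$, about which $\text{H}(c)$ says nothing directly. For these I would use the classical zero-free region $\beta\le1-c_0/\log(D(|\gamma|+2))$ for $L(s,\chi_D)$ — available since $\text{H}(c)$ precludes a Siegel zero, the same input used in Proposition \ref{prop hyp HC for class group l func} — which gives $\exp(\alpha_1(\beta-1)\log x)\le\exp(-c'\alpha_1/\theta)$ throughout $|\gamma|\ll D^{O(1)}$, so that the $|\gamma|^{-2}$ (or faster) decay of $\wt{\Phi}$, weighed against the $O(\log D(|\gamma|+1))$ zero count, makes the tail over $|\gamma|>1$ smaller than the $|\gamma|\le1$ contribution; for $|\gamma|\gg D^{O(1)}$ one simply bends the contour back to $\Re s>1$ and uses the decay of $\wt{\Phi}$ alone. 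Collecting these estimates yields \eqref{smooth summation for chiD}.
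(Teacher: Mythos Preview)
Your approach matches the paper's. For \eqref{summation for chiD} the paper also reads the bound directly off $\eta(x)$ (equivalently, off \eqref{approximation to L1chiD}), and for \eqref{smooth summation for chiD} it says only that the result follows from ``minor adaptations of the proof of Proposition \ref{proposition approximating L function by Euler product}''---which is precisely the explicit-formula argument you outline.

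One point where you make life harder than necessary: the zeros with $|\gamma|>1$ are \emph{not} the main obstacle, and you do not need the classical zero-free region (nor any contour-bending) to dispose of them. Three integrations by parts already give
\[
|\wt{\Phi}(\rho)|\;\ll\;(\alpha_2-\alpha_1)^{-2}\,|(1-\rho)\log x|^{-3}
\]
uniformly in the critical strip, and the trivial count of $O(k\log D)$ zeros with $k<|\gamma|\le k+1$ then makes the tail $\sum_{|\gamma|>1}|\wt{\Phi}(\rho)|\ll(\alpha_2-\alpha_1)^{-2}(\log x)^{-2}$, which is negligible regardless of where those zeros sit horizontally. This is exactly the computation \eqref{bound for high zeros} in the paper's proof of Proposition \ref{proposition approximating L function by Euler product}. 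The genuine content lies in the sum over \emph{low} zeros, and the ingredient you allude to (``the resulting series converges without producing a spurious factor of $\log D$'') but do not name is Lemma \ref{zero density surrogate estimate}, which bounds $\sum_{\rho}|1+(1-\rho)\log x|^{-3}$ by an absolute constant. Invoking that lemma, together with $x^{\alpha_1(\beta-1)}\le\exp(-c\alpha_1/\theta)$ from Hypothesis $\text{H}(c)$, gives \eqref{smooth summation for chiD} with the stated constant $c$; your route through the classical zero-free region would instead produce a smaller constant in the exponent.
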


\begin{proof}[Proof of Corollary \ref{corollary summation for chiD}]
For the function $L(s,\chi_D)$, we see from \eqref{approximating L function by Euler product} that
\[
\eta(x) \;=\; -\sum_{p\ge x}\log(1-\chi_D(p)p^{-1}) \;=\; \sum_{p\ge x}\frac{\chi_D(p)}{p}\;+\;O\Big(\frac{1}{x}\Big).
\]
Then \eqref{summation for chiD} follows from \eqref{explicit bound on eta}. The bound \eqref{smooth summation for chiD} follows from minor adaptations of the proof of Proposition \ref{proposition approximating L function by Euler product}.
\end{proof}

To prove Proposition \ref{proposition approximating L function by Euler product}, we begin with

\begin{lemma}
\label{zero density surrogate estimate}
Let $\rho=\beta+i\gamma$ denote a zero of $L(s,f)$. Then for all $x\ge3$,
\[
\sum_\rho(1+(1-\beta)\log x)^{-1}(1+(|\gamma|\log x)^2)^{-1}\;\le\; d+\frac{\theta}{2}+O(1/\log x),
\]
where the sum $\sum_\rho$ is taken over nontrivial zeros of $L(s,f)$, $\theta=\log\Delta/\log x$, and the implied constant depends only on the gamma factor parameters $\kappa_1,\dots,\kappa_d$. 
\end{lemma}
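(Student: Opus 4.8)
The plan is to deduce this from the classical partial-fraction expansion of $\Lambda(s,f)$, evaluated at the single real point $\sigma_1 = 1 + 1/\log x$ --- the same mechanism that underlies Landau's bound for the zero-counting function and the classical zero-free region. Since $\Lambda(s,f) = \Delta^{s/2}\gamma(s,f)L(s,f)$ is entire of order one, with zeros exactly the nontrivial zeros $\rho = \beta+i\gamma$ of $L(s,f)$, the Hadamard factorization yields $\frac{\Lambda'}{\Lambda}(s,f) = B(f) + \sum_\rho\big(\frac{1}{s-\rho}+\frac{1}{\rho}\big)$ with $\Re B(f) = -\sum_\rho\Re(1/\rho)$ (see \S5 of \cite{iwaniec2004analytic}). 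For real $\sigma_1 > 1$ --- where $\frac{\Lambda'}{\Lambda}$ is holomorphic, as $L(\cdot,f)$ has no zeros and $\gamma(\cdot,f)$ no poles in $\Re s > 1$ --- taking real parts collapses this to
\[
\Re\frac{\Lambda'}{\Lambda}(\sigma_1,f) \;=\; \sum_\rho \Re\frac{1}{\sigma_1-\rho} \;=\; \sum_\rho \frac{\sigma_1-\beta}{(\sigma_1-\beta)^2+\gamma^2},
\]
a sum of positive terms.

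Next I would bound $\Re\frac{\Lambda'}{\Lambda}(\sigma_1,f)$ from above using $\frac{\Lambda'}{\Lambda}(s,f) = \frac{1}{2}\log\Delta + \frac{\gamma'}{\gamma}(s,f) + \frac{L'}{L}(s,f)$. Because $|\alpha_j(p)| \le 1$, every Dirichlet coefficient of $-\frac{L'}{L}(s,f)$ is, in absolute value, at most $d$ times the corresponding (nonnegative) coefficient of $-\frac{\zeta'}{\zeta}(s)$, so $|\frac{L'}{L}(\sigma_1,f)| \le d\big(\frac{1}{\sigma_1-1}+O(1)\big)$; and by Stirling $\Re\frac{\gamma'}{\gamma}(\sigma_1,f) = O(1)$ for $1 < \sigma_1 \le 2$, with implied constants depending only on $d$ and $\kappa_1,\dots,\kappa_d$. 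Taking $\sigma_1 = 1 + 1/\log x$ (which lies in $(1,2)$ for $x \ge 3$) then gives
\[
\sum_\rho \frac{\sigma_1-\beta}{(\sigma_1-\beta)^2+\gamma^2} \;=\; \Re\frac{\Lambda'}{\Lambda}(\sigma_1,f) \;\le\; \frac{1}{2}\log\Delta + d\log x + O(1).
\]

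Finally I would compare the two systems of weights term by term. With $a := 1 + (1-\beta)\log x$ (so $a \ge 1$, since $\beta \le 1$) and $t := |\gamma|\log x$, one has $\sigma_1 - \beta = a/\log x$ and $(\sigma_1-\beta)^2 + \gamma^2 = (a^2+t^2)/(\log x)^2$, so that
\[
\frac{\sigma_1-\beta}{(\sigma_1-\beta)^2+\gamma^2} \;=\; (\log x)\,\frac{a}{a^2+t^2} \;\ge\; \frac{\log x}{a(1+t^2)},
\]
the last step being equivalent to $a^2(1+t^2) \ge a^2+t^2$, i.e.\ to $a \ge 1$. Summing over $\rho$ and dividing by $\log x$ gives
\[
\sum_\rho \frac{1}{(1+(1-\beta)\log x)(1+(|\gamma|\log x)^2)} \;\le\; \frac{1}{\log x}\Big(\frac{1}{2}\log\Delta + d\log x + O(1)\Big) \;=\; d + \frac{\theta}{2} + O\Big(\frac{1}{\log x}\Big),
\]
recalling $\theta = \log\Delta/\log x$; this is the asserted inequality.

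The argument is largely routine, and the only choice that matters is the evaluation point $\sigma_1 = 1 + 1/\log x$: for this value the relations $\sigma_1 - \beta = a/\log x$ and $(\sigma_1-\beta)^2+\gamma^2 = (a^2+t^2)/(\log x)^2$ hold \emph{exactly}, which is what makes the pointwise comparison in the final display immediate and produces the factor $\log\Delta$ with the coefficient $\tfrac12$ stated in the lemma. The one structural ingredient beyond the trivial coefficient bound $|\alpha_j(p)| \le 1$ is the partial-fraction identity together with $\Re B(f) = -\sum_\rho\Re(1/\rho)$ --- classical, and a consequence of the functional equation --- which is the point I would take care to reference precisely.
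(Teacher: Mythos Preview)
Your proof is correct and essentially identical to the paper's: both evaluate the Hadamard partial-fraction expansion at $\sigma=1+1/\log x$, bound $\Re(L'/L)$ by $d/(\sigma-1)+O(1)$ via comparison with $\zeta'/\zeta$, and use the same termwise inequality (your $\frac{a}{a^2+t^2}\ge\frac{1}{a(1+t^2)}$ is exactly the paper's bound in different notation). The only cosmetic difference is that you phrase things via $\Lambda'/\Lambda$ whereas the paper works with $L'/L$ directly.
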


\begin{proof}[Proof of Lemma \ref{zero density surrogate estimate}]
For $\sigma=\Re s>1$ we have
\[
-\frac{L}{L}(s,f)\;=\;\sum_{n\ge1}\Lambda_f(n)n^{-s}
\]
with $\Lambda_f(n)$ supported on prime powers,
\begin{align*}
\Lambda_f(p^k)&\;=\;\sum_{1\le j\le d}\alpha_j(p)^k\log p, \\
\Lambda_f(p)&\;=\;(\alpha_1(p)+\cdots+\alpha_d(p))\log p=\lambda_f(p)\log p.
\end{align*}
Hence $|\Lambda_f(n)|\le d\Lambda(n)$ and
\begin{equation}
\label{log deriv L function bounded by zeta}
\Big|\frac{L'}{L}(\sigma,f)\Big|\;\le\; d\Big|\frac{\zeta'}{\zeta}(\sigma)\Big|\;=\;\frac{d}{\sigma-1}+O(1).
\end{equation}
On the other hand the Hadamard product yields
\[
-\frac{L'}{L}(s,f)\;=\;\frac{1}{2}\log\Delta+\frac{\gamma'}{\gamma}(s,f)-b-\sum_{\rho\ne0, 1}\Big(\frac{1}{s-\rho}+\frac{1}{\rho}\Big).
\]
Note the contribution of the trivial zeros $\rho$ to the above sum is $O(1)$. For the constant $b$, we have
\[
\Re b\;=\;-\sum_{\rho\ne0}\Re\frac{1}{\rho}.
\]
Therefore for $s=\sigma$ with $1<\sigma\le2$ we have
\begin{equation}
\label{from Hadamard product}
\Re\sum_{\rho}\frac{1}{\sigma-\rho}\;=\;\frac{1}{2}\log\Delta+\Re\frac{L'}{L}(\sigma,f)+O(1),
\end{equation}
where the $\rho$ summation now runs over all nontrivial zeros of $L(s,f)$. With $s=\sigma>1$ and $\rho=\beta+i\gamma$ we have
\begin{equation}
\label{bound for 1/sigma-rho}
\Re\frac{1}{\sigma-\rho}=\frac{\sigma-\beta}{(\sigma-\beta)^2+\gamma^2}\;\ge\; \frac{1}{\sigma-1}\Big(1+\frac{1-\beta}{\sigma-1}\Big)^{-1}\Big(1+\Big(\frac{|\gamma|}{\sigma-1}\Big)^2\Big)^{-1}.
\end{equation}
Thus now we choose $s=\sigma=1+1/\log x$ (where $x\ge 3$) and combine \eqref{log deriv L function bounded by zeta}, \eqref{from Hadamard product}, and \eqref{bound for 1/sigma-rho} to prove the lemma.
\end{proof}

\begin{proof}[Proof of Proposition \ref{proposition approximating L function by Euler product}]
We have
\[
L(1,f)\;=\;E(x)e^{\eta(x)},
\]
where $E(x)$ is defined in \eqref{definition of E(x)}, and
\[
\eta(x)\;=\;\log\Big(\prod_{p\ge x}L_p(1,f)\Big).
\]
We put
\[
K(x)\;=\;\sum_{n\ge x}\frac{\Lambda_f(n)}{n\log n}.
\]
Note that this sum converges by virtue of Hypothesis $\text{H}(c)$---it does not converge absolutely. We then check that
\begin{align*}
|K(x)-\eta(x)| \;&\le\; \sum_{p<x}\sum_{k\ge\frac{\log x}{\log p}}\frac{d}{kp^k}\;\le\; d\sum_{p<x}\frac{\log p}{\log x}\sum_{k\ge\frac{\log x}{\log p}}\frac{1}{p^k} \nonumber \\
&\le\; d\sum_{p<x}\frac{\log p}{\log x}\cdot\frac{2}{x}\;\ll\;\frac{1}{\log x},
\end{align*}
and therefore we have
\[
\eta(x)\;=\;K(x)\;+\;O\Big(\frac{1}{\log x}\Big),
\]
where the implied constant depends on $d$. To estimate $K(x)$, we put
\[
K_\phi(x)\;=\;\sum_{n\ge1}\phi\Big(\frac{\log n}{\log x}\Big)\frac{\Lambda_f(n)}{n\log n},
\]
where $\phi(u)$ is a smooth function satisfying $\phi(u)=0$ if $u\le1$, $0\le \phi(u)\le1$ if $1\le u\le1+\eps$, and $\phi(u)=1$ if $u\ge1+\eps$, with $0<\eps<1$. Then we have
\begin{align*}
|K(x)-K_\phi(x)|\;&\le\; d\sum_{x\le n\le x^{1+\eps}}\frac{\Lambda(n)}{n\log n}\;\le\;d\sum_{x\le p\le x^{1+\eps}}\frac{1}{p}\;+\;d\sum_{p\le x}\sum_{k\ge\max(2,\frac{\log x}{\log p})}\frac{1}{kp^k} \\
&\le\;d\log(1+\eps)\;+\;d\sum_{p\le x}\frac{\log p}{\log x}\sum_{k\ge2}\frac{1}{p^k}\;\ll\;\eps\;+\;\frac{1}{\log x},
\end{align*}
where again the implied constant depends on $d$. Thus now we have
\begin{equation}
\label{difference of K and Kphi}
\eta(x)\;=\;K_\phi(x)\;+\;O\Big(\eps+\frac{1}{\log x}\Big).
\end{equation}
To estimate $K_\phi(x)$, we apply the explicit formula for $L(s,f)$,
\begin{equation}
\label{explicit formula for Kf}
K_\phi(x)\;=\;-\sum_\rho \wt{\Phi}(\rho),
\end{equation}
where $\wt{\Phi}(s)$ is the Mellin transform of
\[
\Phi(y)\;=\;\phi\Big(\frac{\log y}{\log x}\Big)\frac{1}{y\log y}.
\]
We have
\begin{align}
\wt{\Phi}(s)&\;=\;\int_0^\infty \Phi(y)y^{s-1}\diff y\;=\; \int_{-\infty}^\infty \phi(u)x^{u(s-1)}u^{-1}\diff u \nonumber \\
\label{after partial integration three times}
&\;=\;(1+(1-s)\log x)^{-3}\int_{1}^\infty(e^uu^{-1}\phi(u))'''e^{-u}x^{u(s-1)}\diff u
\end{align}
by partial integration three times. We can choose $\phi(u)$ so that for $1\le u\le1+\eps$,
\[
(e^uu^{-1}\phi(u))'''\;\ll\; e^u\eps^{-3},
\]
and for $u\ge1+\eps$ we have
\[
(e^uu^{-1}\phi(u))'''\;\ll\;e^{u}.
\]
Using these bounds in \eqref{after partial integration three times}, we get
\begin{equation}
\label{first bound for phi}
\wt{\Phi}(s)\;\ll\;|1+(1-s)\log x|^{-3}\Big(\eps^{-2}+((1-\sigma)\log x)^{-1}\Big)x^{\sigma-1}.
\end{equation}
We also have the bound
\begin{equation}
\label{other bound for phi}
|\wt{\Phi}(s)|\;\ll\;\eps^{-2}|(1-s)\log x|^{-3},
\end{equation}
which is derived in a similar way, using the identity
\[
\wt{\Phi}(s)\;=\;((1-s)\log x)^{-3}\int_{1}^\infty(u^{-1}\phi(u))'''x^{u(s-1)}\diff u
\]
and the bound
\[
\int_1^\infty(u^{-1}\phi(u))'''\diff u\;\ll\;\eps^{-2}.
\]
Recall that the number of zeros $\rho=\beta+i\gamma$ with $0<\beta<1$ and $t<|\gamma|\le t+1$ is $O(\log\Delta(|t|+1))$ for all $t$. Using this with \eqref{other bound for phi}, we see that the contribution of $\rho=\beta+i\gamma$ with $|\gamma|>1$ to the explicit formula \eqref{explicit formula for Kf} is bounded by
\begin{equation}
\label{bound for high zeros}
\sum_{k\ge1}\sum_{k<|\gamma|\le k+1}\wt{\Phi}(\rho)\;\ll\;\eps^{-2}(\log\Delta)\sum_{k\ge1}k^{-2}(\log x)^{-3}\; \ll\;\eps^{-2}(\log x)^{-2}.
\end{equation}
The remaining zeros $\rho=\beta+i\gamma$ satisfy \eqref{assumption on zeros}, so for these zeros we have
\begin{equation}
\label{for the low zeros}
|x^{\rho-1}|\;\le\;\exp\Big(\frac{-c\log x}{\log \Delta}\Big)\;=\;\exp\Big(\frac{-c}{\theta}\Big),\quad\text{and}\quad (1-\beta)\log x\;\ge\;\frac{c}{\theta}.
\end{equation}
Noting that
\[
|1+(1-\rho)\log x|^{-3}\;\le\;(1+(1-\beta)\log x)^{-1}(1+(|\gamma|\log x)^2)^{-1},
\]
we apply Lemma \ref{zero density surrogate estimate} to see that the contribution of these zeros to \eqref{explicit formula for Kf} is
\begin{equation}
\label{bound for remaining zeros}
\sum_{\substack{\rho=\beta+i\gamma \\ |\gamma|\le1}}|\wt{\Phi}(\rho)|\;\ll\;\Big(d+\frac{\theta}{2}\Big)\Big(\eps^{-2}+\frac{\theta}{c}\Big)\exp\Big(\frac{-c}{\theta}\Big)
\end{equation}
by \eqref{first bound for phi} and \eqref{for the low zeros}. The bound above is larger than the one in \eqref{bound for high zeros}, and hence $K_\phi(x)$ is also bounded by the right-hand side of \eqref{bound for remaining zeros}. Combining this bound with \eqref{difference of K and Kphi} then shows
\[
\eta(x)\;\ll\; \eps+\eps^{-2}\exp\Big(\frac{-c}{\theta}\Big)+\frac{1}{\log x}.
\]
Finally we choose $\eps=\exp(-c/3\theta)$, which gives \eqref{explicit bound on eta}.
\end{proof}

\pagebreak

% BIBLIOGRAPHY

\printbibliography

\end{document}